\newcommand{\al}{\alpha}
\newcommand{\be}{\beta}
\newcommand{\la}{\lambda}
\newcommand{\si}{\sigma}
\newcommand{\Si}{\Sigma}
\newcommand{\ga}{\gamma}
\newcommand{\om}{\omega}
\newcommand{\Om}{\Omega}
\newcommand{\ka}{\kappa}
\newtheorem{proposition}{Proposition}[section]
\newtheorem{definition}{Definition}[section]
\newtheorem{lemma}{Lemma}[section]
\newtheorem{theorem}{Theorem}[section]
\newtheorem{corollary}{Corollary}[section]
\theoremstyle{remark}
\newtheorem{remark}[theorem]{Remark}
\begin{document}

\title[]{Almost sure global weak solutions and optimal decay for the incompressible generalized Navier-Stokes equations}

\author{Y.-X. Lin}
\address{Yuan-Xin Lin
\newline\indent
School of Mathematical Sciences, Shanghai Jiao Tong University,
Shanghai, P. R. China}
\email{yuanxinlin@sjtu.edu.cn}

	\author{Y.-G. Wang}
\address{Ya-Guang Wang
	\newline\indent
	School of Mathematical Sciences,  MOE-LSC and SHL-MAC, Shanghai Jiao Tong University,
	Shanghai, P. R. China}
\email{ygwang@sjtu.edu.cn  }
\maketitle

\date{}
\date{}

\begin{abstract} 
In this paper, we consider the initial value problem of the incompressible generalized Navier-Stokes equations with initial data being in negative order Sobolev spaces, in the whole space $\mathbb{R}^d$ with $d \geq 2$. The generalized Navier-Stokes equations studied here is obtained by replacing the standard Laplacian in the classical Navier-Stokes equations by the fractional order Laplacian $-(-\Delta)^\al$ with $\al \in \left( \frac{1}{2},\frac{d+2}{4} \right]$. After an appropriate randomization on the initial data, we obtain the almost sure existence and optimal decay rate of global weak solutions when the initial data belongs to $\Dot{H}^s(\mathbb{R}^d)$ with $s\in (-\al+(1-\al)_+,0)$. Moreover, we show that the weak solutions are unique when $\al=\frac{d+2}{4}$ with $d \geq 2$.

\end{abstract}
~\\
{\textbf{\scriptsize{2020 Mathematics Subject Classification:}}\scriptsize{ 35Q30, 35R60, 76D03}}.
~\\
\textbf{\scriptsize{Keywords:}} {\scriptsize{Generalized Navier-Stokes equations, almost sure existence of global weak solutions, optimal decay rate, negative order Sobolev datum.}}

\section{Introduction}
The aim of this work is to study the following initial value problem for the generalized incompressible Navier-Stokes equations with datum being in negative order Sobolev spaces in $\{t>0, x\in  \mathbb{R}^d \}$ with $d \geq 2$:
\begin{equation}\label{GNS}
    \left\{
    \begin{aligned}
       \partial_t u + (-\Delta)^\al u + (u \cdot \nabla) u + \nabla p & = 0, \quad \quad x \in \mathbb{R}^d, ~\  t>0,  \\
        \mathrm{div} \, u & =0, \\
        u(0,x) & = u_0(x),
    \end{aligned}
\right.
\end{equation}
where $u=(u_1, \cdots , u_d)^T$ and $p$ denote the velocity field and the pressure respectively. The fractional order Laplacian $(-\Delta)^\al$ is defined via the Fourier transform
\begin{equation*}
    (-\Delta)^\al u (t,x) = \mathcal{F}^{-1}\Big\{ |\xi|^{2\al} [\mathcal{F}u(t,\cdot)](\xi) \Big\}(x),
\end{equation*}
where $\mathcal{F}$ and $\mathcal{F}^{-1}$ denote the partial Fourier transform and inverse Fourier transform in the space variables, respectively.

The study of weak solutions to the classical Navier-Stokes equations, i.e. $\eqref{GNS}_1$ with $\al =1$, can date back to 1930s. For any given divergence-free $L^2(\mathbb{R}^d)$ initial data $u_0$, Leray \cite{Leray} obtained the existence of global weak solutions to the Cauchy problem, and the bounded domain counterpart was established by Hopf \cite{Hopf}. Later on, the existence of uniformly locally square integrable weak solutions was given in Lemarié-Rieusset \cite{LR}. These well-posedness results are obtained by assuming that the initial data belongs to the sub-critical or the critical spaces. Burq and Tzvetkov \cite{BT} deduced that the cubic nonlinear wave equation is strongly ill-posed when the initial data lies in the super-critical space, however, after an appropriate randomization on the initial data, they showed that the problem is locally well-posed for almost sure randomized initial data. Since then, their approach was widely used in studying other evolutional nonlinear PDEs with super-critical initial data, see \cite{BT1,DH,QTW,WH} and the references therein. In particular, there has been interesting progress on the Cauchy problem for the classical Navier-Stokes equations when the initial data belongs to $\Dot{H}^s(\mathbb{T}^d)$ or $\Dot{H}^s(\mathbb{R}^d)$ with $s \leq 0$. When $u_0 \in L^2(\mathbb{T}^3)$, Zhang and Fang in \cite{ZF} obtained the almost sure existence and uniqueness of a local strong solution, and it exists globally in time in a positive probability providing that the $L^2$ norm of $u_0$ is small. Meanwhile, as $u_0 \in L^2(\mathbb{T}^3)$, Deng and Cui in \cite{DC} showed the existence and uniqueness of a local mild solution, and it exists globally in time in a large probability with small initial datum. Nahmod, Pavlovic and Staffilani \cite{NPS} obtained the almost sure existence of global weak solutions for  randomized initial data when $u_0 \in \Dot{H}^s(\mathbb{T}^d)$ with $d=2,3$ and $-\frac{1}{2^{d-1}}<s<0$, and they also showed that the weak solutions are unique in dimension two. In the whole space case, the same results as in \cite{NPS} were obtained by Chen, Wang, Yao and  Yu \cite{CWYY}. For $u_0 \in \Dot{H}^s(\mathbb{R}^3)$ with $-\frac{1}{2} \leq s <0$, Wang and Wang \cite{WW} got the existence of global weak solutions for almost sure randomized initial data. Afterwards, Du and Zhang \cite{DZ} obtained the almost sure existence of global weak solutions when the initial data $u_0$ belongs to $\Dot{H}^s(\mathbb{T}^d)$ with $-1<s<0$ for all $d \geq 2$, and the uniqueness holds as well when  $d=2$.

There are also many interesting results on the time decay of weak solutions for the classical Navier-Stokes equations, which give a confirmed answer to the question posed by Leray \cite{Leray}, i.e. whether  the $L^2$ norm of weak solutions tends to zero as time goes to infinity. The Fourier splitting method was introduced by Schonbek \cite{S1} for studying the decay rates for parabolic conservation laws. Later on, she used this method to obtain the first algebraic decay rate for the 3D Navier-Stokes equations \cite{S2}, more precisely, she showed that for any $u_0 \in L^1 \cap L^2$, the Leray-Hopf weak solutions satisfy
\begin{equation}\label{D}
    ||u(t)||_{L_x^2} \leq C(t+1)^{-\be},
\end{equation}
with $\be=\frac{1}{4}$ and $C$ being a positive constant. Afterwards, Schonbek \cite{S3}, Kajikiya and Miyakawa \cite{KM} improved the decay exponent $\be$ to be $\frac{3}{2}\left( \frac{1}{p}-\frac{1}{2} \right)$ when the initial data $u_0$ belongs to $L^p(\mathbb{R}^3) \cap L^2(\mathbb{R}^3)$ with $p \in [1,2)$. Wiegner \cite{W} obtained the optimal decay rate for weak solutions and the difference between weak solutions and the corresponding heat flow in a general setting. More recently, for $u_0 \in \Dot{H}^s(\mathbb{R}^3)$ with $s \in \left[ -\frac{1}{2},0 \right)$, Wang and Yue got the almost sure decay rate \eqref{D} with $\be=-\frac{s}{2}$. We refer the readers to \cite{BM,KO,M,K,MS} for more results on large time behavior for the classical Navier-Stokes equations, and \cite{WS,DLM1,DLM2,KMV,Y} for other fluid models.

There is also certain progress on problems of generalized Navier-Stokes equations. For any $\al >0$ and $u_0 \in L^2(\mathbb{T}^d)$ with $d \geq 2$, Wu \cite{Wu} obtained the existence of global weak solutions to the problem \eqref{GNS}. It is well known that the critical exponent for the equation $\eqref{GNS}_1$ is $\al = \frac{d+2}{4}$. In the sub-critical and critical case, i.e. $\al \geq \frac{d+2}{4}$, Wu in \cite{Wu} also showed the existence of a unique strong solution to the problem \eqref{GNS} for initial data $u_0$ being in $H^s(\mathbb{T}^d)$ with $d \geq 2$ and $s \geq 2\al$. In recent years, the problems of the super-critical case $0<\al < \frac{d+2}{4}$ attract more attention. When $d \geq 3$, $\al \in \left( \frac{1}{2},\frac{d+2}{4} \right)$ and $u_0 \in \Dot{H}^s(\mathbb{T}^d)$ or $\Dot{H}^s(\mathbb{R}^d)$ with
\begin{equation*}
    s \in \left\{ 
    \begin{aligned}
        (1-2\al, 0] & ,~\ \ \ \ \ \ \frac{1}{2} < \al \leq 1, \\
        [-\al, 0] & ,~\ \ \ \ \ \ 1< \al < \frac{d+2}{4},
    \end{aligned}
    \right.
\end{equation*}
Zhang and Fang \cite{ZF1} obtained the local existence of mild solutions to the problem \eqref{GNS} for almost sure randomized initial data, and the solutions exist globally in time in a positive probability providing that the $\Dot{H}^s$ norm of $u_0$ is sufficiently small. Recently, the authors \cite{LW} showed the almost sure existence of global weak solutions to the problem \eqref{GNS} when $\al \in \left( \frac{2}{3},1 \right]$, $u_0 \in \Dot{H}^s(\mathbb{T}^d)$ with $d \geq 2$ and $s \in (1-2\al,0)$. In addition, Jiu and Yu \cite{JY} obtained the optimal decay rate for the $L^2$ norm of the global weak solutions when $u_0 \in L^p(\mathbb{R}^3) \cap L^2(\mathbb{R}^3)$ with $1 \leq p < 2$ and $\al \in \left( 0,\frac{5}{4} \right)$. Later on, the decay result was extended to $d \geq 2$, $\al \in \left( 0,\frac{d+2}{4} \right]$ by Yu \cite{Y}, and $d \geq 2$, $\al \in (0,1)$ by Liu \cite{L}.

It is worth emphasizing that the lower bound $\frac{2}{3}$ of $\al$ in \cite{LW} comes from technical constraints. Indeed, when $t$ near zero, we obtain a mild solution in a subspace of $L^{\frac{4}{1+2\ga}} \left( 0,\tau; L_x^{\frac{2d}{(3-2\ga)\al - 2}} \right)$ with $\ga \in \left[ 0,\frac{1}{2} \right)$ for small $\tau>0$. In order to ensure that $\frac{2d}{(3-2\ga)\al - 2}$ makes sense one needs $\al > \frac{2}{3}$. However, we observed that the restriction $\al > \frac{2}{3}$ can be removed by using a fixed point argument in a proper subspace of $L^a \left( 0,\tau; L^p(\mathbb{R}^d) \right)$, where $a$ and $p$ are defined in \eqref{AP}. Thus, we shall consider $\al > \frac{1}{2}$, which is the natural lower bound since we need to transform the problem to its integral form to overcome the singularity at $t=0$.

In this paper, we shall first extend the almost sure global existence result obtained in \cite{LW} to the case $\al \in \left( \frac{1}{2},\frac{d+2}{4} \right]$ when $d \geq 2$, moreover, the space domain shall be the whole space $\mathbb{R}^d$ instead of the torus $\mathbb{T}^d$. Next, we shall study the optimal time decay rate for the $L^2$ norm of global weak solutions and of the difference between the solution of \eqref{GNS} 
and the corresponding generalized heat flow. Finally, when $\al=\frac{d+2}{4}$ with $d \geq 2$, we shall obtain the uniqueness of weak solutions, which extends the uniqueness result of the 2D classical Navier-Stokes equations deduced in \cite{NPS,DZ}.

We follow the way given in \cite{WW,ZF,DZ} to introduce the randomization set up in $\mathbb{R}^d$. Let $\phi \in C_c^\infty (\mathbb{R}^d)$ be a non-negative and radial symmetric function, satisfying
\begin{equation*}
    \phi (\xi) := \left\{
    \begin{aligned}
        & 1, ~\ |\xi|<\frac{3}{4}, \\
        & 0, ~\ |\xi| \geq 1.
    \end{aligned}
    \right.
\end{equation*}
Define
$$\varphi (\xi) = \frac{\phi(\xi)}{\sum_{k \in \mathbb{Z}^d} \phi(\xi-k)},$$
It is clear that $\varphi$ is also  non-negative and radial symmetric, and supported in $\overline{B(0,1)} := \{ \xi \in \mathbb{R}^d \, : \, |\xi| \leq 1 \}$.

Define the differential operator $\varphi (\Lambda - k)$ by
$$\varphi(\Lambda - k)f(x) = \mathcal{F}^{-1} \Big\{ \varphi (\xi - k) \mathcal{F}f (\xi) \Big\}(x)$$
with $\mathcal{F}$ and $\mathcal{F}^{-1}$ being the Fourier and inverse Fourier respectively of the related function in the space variable.
Since $\sum_{k \in \mathbb{Z}^d} \varphi (\xi - k) = 1$ for all $\xi\in \mathbb{R}^d$, one can decompose $f$ as
\begin{equation*}
    f(x) = \sum_{k \in \mathbb{Z}^d} \varphi (\Lambda - k) f(x).
\end{equation*}

Let $\{g_k(\omega)\}_{k \in \mathbb{Z}^d}$ be a sequence of independent, mean zero, real-valued random variables defined on a given probability space $(\Omega,\mathcal{F}, \mathbb{P})$, such that
\begin{equation}\label{HN}
    \exists C>0, \,\,\,\, \forall k \in \mathbb{Z}^d,\,\,\,\, \int_\Omega|g_k(\omega)|^{2n}\mathbb{P}(d\omega) \leq C,
\end{equation}
where $n$ is an integer satisfying
\begin{equation}\label{rs}
    2n \geq r_s := \max\left\{ \frac{4\al}{2\al(\mu+1)-1}, \frac{2d}{2\al(1-\mu)-1} \right\},
\end{equation}
with
\begin{equation}\label{mu}
        \mu = \left( -\frac{s}{\al}+\frac{1}{2\al}-1 \right)_+ := \max\left\{ -\frac{s}{\al}+\frac{1}{2\al}-1,0 \right\}.
\end{equation}

In the following discussion, we shall always denote by $X_\si$ the divergence-free subspace of $X$ for any Banach space $X$ of functions defined in $\mathbb{R}^d$. 

For $u_0 \in \Dot{H}_\si^s(\mathbb{R}^d)$, we introduce its randomization by
\begin{equation}\label{RI}
    u_0^\omega =  \sum_{k \in \mathbb{Z}^d} g_k(\omega) \varphi (\Lambda - k) u_0,
\end{equation}
which defines a measurable map from $(\Omega,\mathcal{F}, \mathbb{P})$ to $\Dot{H}_\si^s(\mathbb{R}^d)$ equipped with the Borel $\sigma$-algebra, and one can verify that $u_0^\omega \in L^2(\Omega, \Dot{H}_\si^s(\mathbb{R}^d))$.

Recall that for any Banach space $X$, $f \in C_{weak}([0,T];X)$ means that for any $\psi \in X'
    $, the dual space of $X$, $\langle f(t), \psi \rangle_{X \times X'}$ is continuous on $[0,T]$. The weak solution of the problem \eqref{GNS} is defined as the following one.

\begin{definition}\label{DW}
    Let $\al \in \left( \frac{1}{2},\frac{d+2}{4} \right]$ and $u_0 \in \Dot{H}_\si^{s}(\mathbb{R}^d)$ with $d \geq 2$ and $s \in (-\al+(1-\al)_+,0)$. A function $u \in L_{loc}^\infty ((0,T];L_\si^2(\mathbb{R}^d)) \cap L_{loc}^2((0,T];\Dot{H}_\si^\al(\mathbb{R}^d))$ is a weak solution of the problem \eqref{GNS} on $[0,T]$, if for any divergence-free test function $\psi \in C_c^\infty(\mathbb{R}^d)$ and a.e. $t \in [0,T]$, it holds that
    \begin{equation*}
        \langle \partial_t u,\psi \rangle(t) + \langle \Lambda^\al u,\Lambda^\al \psi \rangle(t) - \langle (u \cdot \nabla \psi), u \rangle (t)= 0,
    \end{equation*}
    and
    \begin{equation*}
        \lim_{t \rightarrow 0^+} ||u(t)-u_0||_{\Dot{H}_x^s} = 0,
    \end{equation*}
    where $(\Lambda^\al u) (x) = \mathcal{F}^{-1}\Big\{ |\xi|^{\al} \mathcal{F}u(\xi) \Big\}(x)$, the notation $\langle \cdot, \cdot \rangle$ represents the dual pairing with respect to the space variables.
    
    We say that the problem \eqref{GNS} has a global weak solution if for any $T>0$, it admits a weak solution on $[0,T]$.
\end{definition}

The main results of this paper are the following two theorems, one is the almost sure existence of a global weak solution of the problem \eqref{GNS} after randomization of the initial data, and the other one concerns the optimal decay rate of the weak solution as $t\to +\infty$.

\begin{theorem}\label{TW}
    Let $\al \in \left( \frac{1}{2},\frac{d+2}{4} \right]$ with $d \geq 2$ and $s \in (-\al+(1-\al)_+,0)$. For any given $u_0 \in \Dot{H}_\si^s(\mathbb{R}^d)$, let $u_0^\om$ be the randomization  of $u_0$ given in \eqref{RI}. Then there exists an event $\Sigma \subset \Om$ with 
probability $1$, i.e. $\mathbb{P}(\Sigma) = 1$, such that for any $\om \in \Sigma$, the problem \eqref{GNS} with the initial data $u_0^\om$ admits a global weak solution in the sense of Definition \ref{DW} of the form $u = h^\om + w,$ with $h^\om = e^{-t(-\Delta)^\al}u_0^\om$, and satisfying for any $T>0$,
    \begin{equation}\label{RH1}
        (1 \land t )^\mu w \in C_{weak} \Big([0,T];L_\si^2(\mathbb{R}^d) \Big) \cap L^2 \Big(0,T;\Dot{H}_\si^\al(\mathbb{R}^d) \Big),
    \end{equation}
    and
    \begin{equation}\label{RH2}
        w \in L^{\frac{4\al}{2\al(\mu+1)-1}}\left( 0,T;L^{\frac{2d}{d+1-2\al(\mu+1)}}(\mathbb{R}^d) \right),
    \end{equation}
    where $\mu$ is defined in \eqref{mu} and $1 \land t = \min\{ 1,t \}$.

    Moreover, when $\al = \frac{d+2}{4}$ with $d \geq 2$, the weak solution satisfying \eqref{RH1}-\eqref{RH2} is unique.
\end{theorem}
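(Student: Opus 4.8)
The plan is to use the Burq--Tzvetkov decomposition $u=h^\om+w$, where $h^\om=e^{-t(-\Delta)^\al}u_0^\om$ is the generalized heat flow of the randomized datum and the remainder $w$ solves the residual system
\begin{equation*}
    \partial_t w + (-\Delta)^\al w + \big((w+h^\om)\cdot\nabla\big)(w+h^\om) + \nabla\tilde p = 0, \qquad w(0)=0,
\end{equation*}
obtained by subtracting $\partial_t h^\om+(-\Delta)^\al h^\om=0$ from \eqref{GNS}. The gain from randomization is that, although $u_0\in\Dot H_\si^s$ with $s<0$ has only negative regularity, the flow $h^\om$ enjoys almost surely a far better space--time integrability than a generic element of $\Dot H^s$, and this is exactly what makes the forcing terms $(h^\om\cdot\nabla)h^\om$, $(w\cdot\nabla)h^\om$ and $(h^\om\cdot\nabla)w$ tractable. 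The remainder $w$, by contrast, lives at the $L^2$-energy level and will be produced by a deterministic argument once $\om$ is frozen in a suitable full-measure set.

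For the existence part I would proceed in four steps. \emph{Step 1 (probabilistic linear estimates).} Using the moment bound \eqref{HN} together with the finite overlap of the projections $\varphi(\La-k)$, I would run a Khintchine-type (large-deviation) estimate to show that, for $2n\geq r_s$ as in \eqref{rs}, the flow $h^\om$ lies almost surely in the space \eqref{RH2} and that the weighted quantity $(1\wedge t)^\mu\Lambda^\al h^\om$ enjoys the required control; this defines the event $\Sigma$ with $\mathbb P(\Sigma)=1$. The exponent $r_s$ and the weight $\mu$ of \eqref{mu} are chosen precisely to absorb the $t=0$ singularity coming from the negative-order datum. \emph{Step 2 (local construction of $w$).} Freezing $\om\in\Sigma$, I would solve the residual system near $t=0$ by a contraction mapping in a subspace of $L^a(0,\tau;L^p(\mathbb R^d))$ with $a,p$ as in \eqref{AP}; this is the change of working space that removes the restriction $\al>\tfrac23$ of \cite{LW} and yields a mild solution with $w(0)=0$ having the integrability \eqref{RH2}. \emph{Step 3 (weighted energy estimate).} Testing the residual system against $(1\wedge t)^{2\mu}w$ and bounding the trilinear terms by Hölder's inequality, the critical Sobolev embedding for $\Dot H^\al$ and the Step 1 bounds on $h^\om$, I would derive the a priori estimate behind \eqref{RH1}. \emph{Step 4 (global weak solution).} A Galerkin approximation together with the Aubin--Lions compactness lemma then permits passage to the limit in the nonlinear terms and extension of $w$ to every interval $[0,T]$, producing a global weak solution $u=h^\om+w$ in the sense of Definition \ref{DW}; the attainment of the initial datum in $\Dot H^s$ follows from $w(0)=0$ and the continuity of $h^\om$.

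For the uniqueness at the critical exponent $\al=\frac{d+2}{4}$, let $u_i=h^\om+w_i$ ($i=1,2$) be two solutions satisfying \eqref{RH1}--\eqref{RH2}, and set $v=w_1-w_2$, so that $v(0)=0$ and
\begin{equation*}
    \partial_t v + (-\Delta)^\al v + (v\cdot\nabla)(w_2+h^\om) + \big((w_1+h^\om)\cdot\nabla\big)v + \nabla\tilde q = 0 .
\end{equation*}
Testing with $v$ and using that the divergence-free transport term $\langle((w_1+h^\om)\cdot\nabla)v,v\rangle$ vanishes, I obtain
\begin{equation*}
    \tfrac12\tfrac{d}{dt}\|v\|_{L^2}^2 + \|\Lambda^\al v\|_{L^2}^2 = -\big\langle (v\cdot\nabla)(w_2+h^\om),\,v\big\rangle .
\end{equation*}
I would then control the right-hand side by Hölder's inequality, the critical embedding for $\Dot H^\al$ and interpolation with $L^2$, so as to bound it by $\|\Lambda^\al v\|_{L^2}^2$ times a coefficient built from the integrability \eqref{RH2} of $w_2$ and the Step 1 integrability of $h^\om$; after absorbing the dissipation, a Gronwall argument started from $v(0)=0$ forces $v\equiv 0$.

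The main obstacle is exactly this last closure. Because $\al=\frac{d+2}{4}$ leaves no scaling room, the trilinear term $\langle(v\cdot\nabla)(w_2+h^\om),v\rangle$ must be estimated precisely by the dissipation $\|\Lambda^\al v\|_{L^2}^2$ multiplied by a coefficient that is \emph{integrable in $t$ up to $t=0$}, where $w_2$ and $h^\om$ are only controlled with the weight $(1\wedge t)^\mu$. Showing that the borderline Sobolev estimate is compatible with this $t=0$ singularity, that is, that the weighted bounds \eqref{RH1}--\eqref{RH2} still produce an $L^1_t$ Gronwall coefficient, is the delicate step, and it is here that the critical value of $\al$, rather than a sub-critical one, is genuinely used.
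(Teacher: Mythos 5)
Your existence scheme follows the paper's architecture in outline (probabilistic bounds for $h^\om$, contraction near $t=0$ in an $L^a(0,\tau;L^p)$-type space with $a,p$ as in \eqref{AP}, energy/Galerkin away from zero), but Step 3 as stated is not viable and it is not how the weighted bounds \eqref{RH1} are actually obtained. Testing the residual system against $(1\land t)^{2\mu}w$ produces trilinear terms such as $\langle B(w,w),h^\om\rangle\lesssim \|\Lambda^\al w\|_{L_x^2}^2\,\|\Lambda^{1-\al}h^\om\|_{L_x^{d/\al}}$, and for $u_0\in\Dot H^s$ with $s<0$ the coefficient $\|\Lambda^{1-\al}h^\om(t)\|_{L_x^{d/\al}}$ blows up like a negative power of $t$ as $t\to0^+$; since any power weight on $w$ multiplies the dissipation and the trilinear term identically, no choice of weight lets you absorb this near $t=0$. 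The paper instead extracts all weighted bounds near zero from the Duhamel representation of the mild solution (Propositions \ref{MS}, \ref{P1}, \ref{P2}), runs the unweighted energy estimate and Galerkin compactness only on $[\tau_\om/2,T]$, where the $h^\om$-coefficients in \eqref{WS9}--\eqref{WS10} are time-integrable, and then \emph{glues} the mild solution $w_1$ to the energy solution $w_2$ via the Serrin-type uniqueness Lemma \ref{UR} on the overlap $[\tau_\om/2,\tau_\om]$. Your proposal has no gluing step, and without it the two pieces need not agree. Relatedly, a fixed point in $L^a_\tau L^p_x$ alone does not suffice to verify that the mild solution is a weak solution in the sense of Definition \ref{DEWS}: the paper is explicit that $w_1,h^\om\in L_\tau^a L_x^p$ does not control the quantities in $X_{\tau,1}$ or $L^2_{\mu;\tau}\Dot H^\al_x$, which is precisely why the auxiliary spaces $Y_{T,j}$ of \eqref{Y} are introduced.

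For uniqueness at $\al=\frac{d+2}{4}$, your Gr\"onwall argument from $v(0)=0$ genuinely fails in the regime $\mu>0$, i.e. $s<\frac{1-2\al}{2}$, and not for the reason you flag. The $L^1_t$ integrability of the Gr\"onwall coefficient is in fact unproblematic: at criticality $\la=p$, and $\|w_2+h^\om\|_{L_x^p}^a\in L^1(0,T)$ follows from \eqref{RH2}. The real obstruction is that a weak solution in the class only satisfies $t^\mu w\in L^\infty L^2$ with $w(t)\to 0$ in $\Dot H^s$, so $\|v(t)\|_{L_x^2}$ may blow up like $t^{-\mu}$ as $t\to0^+$: there is no $L^2$ starting value and no justified energy identity down to $t=0$, so Gr\"onwall cannot be initialized. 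The paper circumvents exactly this point (it states that Corollary \ref{UR-1} cannot be used directly): it first proves that any weak solution in the class coincides with its own Duhamel representation, $w=K(w)$, by mollifying $w-K(w)$ and invoking uniqueness for the linear fractional heat equation \eqref{U4}, then obtains $w=\widetilde w$ on a small interval $[0,\tau]$ by the contraction estimate \eqref{U3} with $\|(w,\widetilde w,h^\om)\|_{L_\tau^a L_x^p}$ small, and only afterwards applies the Serrin-type Corollary \ref{UR-1} from a positive time $\si\in(0,\tau)$ where an $L^2$ initial value is available. You correctly identify the initial layer as the delicate step, but the mechanism you propose cannot close it.
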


\begin{theorem}\label{TD}
    Let $u = h^\om + w$ be the almost sure global weak solution of the problem \eqref{GNS} with the initial data $u_0^\om$ obtained in Theorem \ref{TW} for all $\om \in \Sigma$ with $\mathbb{P}(\Sigma) = 1$.  Then there exist $T_0^\om > e$ for each $\omega\in \Sigma$, and a positive constant $C$ depending only on $\al,d,s$ and $\om$, such that
    \begin{equation}\label{TD1}
        ||u(t)||_{L_x^2}^2 \leq Ct^{\frac{s}{\al}}, \quad \forall t>T_0^\om,
    \end{equation}
    and
    \begin{equation}
        ||w(t)||_{L_x^2}^2 = ||u(t)-h^\om(t)||_{L_x^2}^2 \leq Ct^{-\frac{d+2}{2\al}+2+\frac{2s}{\al}}, \quad \forall t>T_0^\om.
    \end{equation}
\end{theorem}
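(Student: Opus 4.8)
\section*{Proof proposal for Theorem \ref{TD}}

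The plan is to use the decomposition $u = h^\om + w$ and to estimate the two pieces by entirely different mechanisms. The linear generalized heat flow $h^\om = e^{-t(-\Delta)^\al}u_0^\om$ carries the slow rate $t^{s/\al}$ and is treated by an explicit computation in Fourier variables, whereas the nonlinear remainder $w$ decays faster and is treated by the Fourier splitting method of Schonbek. Since $\|u(t)\|_{L_x^2}^2 \leq 2\|h^\om(t)\|_{L_x^2}^2 + 2\|w(t)\|_{L_x^2}^2$, and since a direct comparison of exponents shows $-\frac{d+2}{2\al}+2+\frac{2s}{\al} \leq \frac{s}{\al}$ precisely because $\al \leq \frac{d+2}{4}$, the bound \eqref{TD1} for $u$ will follow at once from the bound for $h^\om$ together with the sharp bound for $w$.

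For the heat flow I would write $\|h^\om(t)\|_{L_x^2}^2 = \int_{\mathbb{R}^d} e^{-2t|\xi|^{2\al}}|\widehat{u_0^\om}(\xi)|^2\,d\xi$ and pull out the weight $|\xi|^{2s}$, obtaining
\begin{equation*}
\|h^\om(t)\|_{L_x^2}^2 \leq \Big(\sup_{\xi \in \mathbb{R}^d}|\xi|^{-2s}e^{-2t|\xi|^{2\al}}\Big)\,\|u_0^\om\|_{\Dot{H}_x^s}^2 .
\end{equation*}
The scalar maximization of $r \mapsto r^{-2s}e^{-2tr^{2\al}}$, whose maximizer satisfies $r^{2\al}=\frac{-s}{2\al t}$, gives $\sup_{\xi}|\xi|^{-2s}e^{-2t|\xi|^{2\al}} \leq C\,t^{s/\al}$; as $\|u_0^\om\|_{\Dot{H}_x^s}<\infty$ almost surely this yields $\|h^\om(t)\|_{L_x^2}^2 \leq C_\om\,t^{s/\al}$. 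The same kernel, combined with the improved integrability furnished by the randomization (the moment condition \eqref{HN}--\eqref{rs}), also delivers the $L_x^p$ decay of $h^\om$ and $\nabla h^\om$ that I shall need in the nonlinear estimate.

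For the remainder I would start from $\partial_t w + (-\Delta)^\al w + \Pi\,\mathrm{div}(u\otimes u) = 0$, where $\Pi$ denotes the Leray projection, pair it with $w$, and use $\mathrm{div}\,u=0$ to drop both the pressure and the self-interaction $\int(u\cdot\nabla)w\cdot w$, leaving the identity $\frac{1}{2}\frac{d}{dt}\|w\|_{L_x^2}^2 + \|\Lambda^\al w\|_{L_x^2}^2 = -\int_{\mathbb{R}^d}(u\cdot\nabla)h^\om\cdot w\,dx$. After splitting $u = h^\om + w$, the right-hand side is controlled by Hölder's inequality together with the interpolation inequality associated to $\Dot{H}_x^\al \hookrightarrow L_x^{2d/(d-2\al)}$ and the $L_x^p$ decay of $h^\om$, a fraction of the dissipation $\|\Lambda^\al w\|_{L_x^2}^2$ being absorbed by Young's inequality. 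I then run the Fourier splitting: bounding $\|\Lambda^\al w\|_{L_x^2}^2 \geq \rho(t)^{2\al}\big(\|w\|_{L_x^2}^2 - \int_{|\xi|\leq\rho(t)}|\hat w|^2\,d\xi\big)$ with $\rho(t)^{2\al}=\beta/t$, multiplying by the integrating factor $t^{2\beta}$ and integrating, so that the estimate reduces to controlling the low-frequency mass. This mass is handled through the Duhamel representation $\hat w(t,\xi)=-\int_0^t e^{-(t-\tau)|\xi|^{2\al}}\,\widehat{\Pi\,\mathrm{div}(u\otimes u)}(\tau,\xi)\,d\tau$ and the pointwise bound $|\widehat{\Pi\,\mathrm{div}(u\otimes u)}(\tau,\xi)|\leq|\xi|\,\|u(\tau)\|_{L_x^2}^2$, which yield
\begin{equation*}
\int_{|\xi|\leq\rho(t)}|\hat w(t,\xi)|^2\,d\xi \lesssim \rho(t)^{d+2}\Big(\int_0^t\|u(\tau)\|_{L_x^2}^2\,d\tau\Big)^2 .
\end{equation*}
Once the sharp bound $\int_0^t\|u\|_{L_x^2}^2\,d\tau \lesssim t^{1+s/\al}$ is available, tracking the powers of $t$ through the splitting inequality (the factor $t^{-1}$ from $\rho^{2\al}$ and the $t^{+1}$ from the time integration cancel) produces exactly $\|w(t)\|_{L_x^2}^2 \lesssim t^{-\frac{d+2}{2\al}+2+\frac{2s}{\al}}$.

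The principal difficulty is that the last two steps are coupled: the low-frequency estimate requires an a priori bound on $\int_0^t\|u\|_{L_x^2}^2$, yet through $\|u\|_{L_x^2}^2 \leq 2\|h^\om\|_{L_x^2}^2 + 2\|w\|_{L_x^2}^2$ this feeds back on the very decay of $\|w\|_{L_x^2}^2$ we are trying to establish. I expect to close the loop by a bootstrap: first establish a qualitative decay $\|w(t)\|_{L_x^2}\to 0$ from the integrability of the dissipation and a preliminary application of Fourier splitting; then insert a crude bound such as $\int_0^t\|u\|_{L_x^2}^2\,d\tau \lesssim t$ to obtain a first algebraic decay of $\|w\|_{L_x^2}^2$; then reinsert that decay into $\int_0^t\|u\|_{L_x^2}^2$, so that its time integral becomes dominated by $\int_0^t\|h^\om\|_{L_x^2}^2\lesssim t^{1+s/\al}$, and iterate until the exponent stabilizes at the sharp value. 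The genuinely delicate point is the endpoint $\al=\frac{d+2}{4}$, where the crude step produces only $t^0$ and the naive iteration barely fails to gain, so a sharpened splitting (or a logarithmically weighted variant) is needed there. A final point demanding care is that the almost-sure random constants entering the $L_x^p$ bounds for $h^\om$ carry logarithmic-in-time factors, which is exactly why the conclusion is asserted only for $t>T_0^\om$ with $T_0^\om>e$.
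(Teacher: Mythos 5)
Your proposal follows essentially the same route as the paper's proof: the linear bound $\|h^\om(t)\|_{L_x^2}^2\lesssim t^{s/\al}$ via the kernel, the energy inequality for $w$ with the cross term absorbed into the dissipation for $t>T_0^\om$ (Lemma \ref{TL}), Schonbek's splitting combined with the Duhamel pointwise bound $|\hat{w}(t,\xi)|\lesssim |\xi|\bigl(\int_{T_0}^t\|w\|_{L_x^2}^2\,d\tau+t^{1+s/\al}\bigr)$ (Lemma \ref{PL}), the bootstrap iterating the improvement $\ga\mapsto -\frac{d+2}{2\al}+2+2\ga$ until it saturates at $\frac{2s}{\al}$ (Lemmas \ref{IML}--\ref{2DL}), and, at the endpoint $\al=\frac{d+2}{4}$, precisely the logarithmically weighted splitting $g(t)=(\frac{2}{3}t\ln t)^{-1/(2\al)}$ that you anticipate. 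The only inaccuracy is peripheral: in the paper $T_0^\om>e$ is chosen so that the decaying coefficient $\|\Lambda^{1-\al}h^\om\|_{L_x^{d/\al}}$ is small enough to absorb $\langle B(w,w),h^\om\rangle$ into the dissipation and so that $\ln T_0>1$ in the endpoint argument, not because the random constants in the $L_x^p$ bounds for $h^\om$ carry logarithmic-in-time factors (they do not).
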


\begin{remark}
    (1) When $\al \in \left( 1,\frac{d+2}{4} \right]$ with $d \geq 3$, the property \eqref{RH1} in Theorem \ref{TW} can be improved as
    $$w \in C_{weak} \Big([0,T];L_\si^2(\mathbb{R}^d) \Big) \quad \mathrm{and} \quad (1 \land t )^\mu w \in L^2 \Big(0,T;\Dot{H}_\si^\al(\mathbb{R}^d) \Big).$$

    (2) Theorems \ref{TW} and \ref{TD} also hold true if  the equations $\eqref{GNS}_1$ is defined in $\mathbb{T}^d$ instead of $\mathbb{R}^d$, under the randomization introduced in \cite{DZ,ZF,LW}. As a consequence, we extend our previous result \cite[Theorem 1.1]{LW} from $\al \in \left( \frac{2}{3},1 \right]$ to $\al \in \left(\frac{1}{2},\frac{d+2}{4} \right]$ for all $d \geq 2$.

    (3) The decay rate obtained in Theorem \ref{TD} is optimal since the corresponding generalized heat flow $h^\om(t)$ has the same decay rate as in \eqref{TD1}.
\end{remark}

The remainder of this paper is organized as follows. In Section 2, we give serval notations and properties of the solution spaces, and estimates of the generalized heat flow $h^\om$, which shall be used later. The existence of global weak solutions to the problem \eqref{GNS} is obtained in Section 3. In Section 4, we show the uniqueness of weak solutions when $\al=\frac{d+2}{4}$, and in Section 5 we obtain the optimal time decay given in Theorem \ref{TD}. In Appendix \ref{A}, we present the proofs of serval lemmas used in this paper, for completeness.

Throughout this paper, the notation $A \lesssim B$ denotes $A \leq CB$ for some positive constant $C$, which may change from line to line.

\section{Preliminary}

In this section, we first introduce the inhomogeneous and homogeneous Sobolev spaces, and give serval properties used frequently in the following. Then, the required estimates for the generalized heat flow $h^\om=e^{-t(-\Delta)^\al}u_0^\om$ are established.

\subsection{Notations and several lemmas}
$~\ $

For any $\be \in \mathbb{R}$ and $r \in (1,\infty)$, define the inhomogeneous Sobolev spaces by
\begin{equation}\label{S}
    W^{\be,r}(\mathbb{R}^d) := \{ f \in \mathcal{S}' : (1-\Delta)^{\frac{\be}{2}} f \in L^r(\mathbb{R}^d) \},
\end{equation}
endowed with the norm $||f||_{W_x^{\be,r}} := ||(1-\Delta)^{\frac{\be}{2}} f||_{L_x^r}$, where $\mathcal{S}'$ is the Schwartz distribution space. The homogeneous counterpart of $W^{\be,r}(\mathbb{R}^d)$ is defined by
\begin{equation}\label{HS}
    \Dot{W}^{\be,r}(\mathbb{R}^d) := \{ f \in \mathcal{S}' : \Lambda^{\be} f \in L^r(\mathbb{R}^d) \},
\end{equation}
with the semi-norm $||f||_{\Dot{W}_x^{\be,r}} := ||\Lambda^\be f||_{L_x^r}$, and $\Lambda := (-\Delta)^{\frac{1}{2}}$. Clearly, for $\be>0$ one has
\begin{equation}\label{N}
    ||f||_{W_x^{\be,r}} \sim ||f||_{L_x^r} + ||f||_{\Dot{W}_x^{\be,r}},
\end{equation}
and the dual space of $W^{\be,r}(\mathbb{R}^d)$ is $W^{-\be,r'}(\mathbb{R}^d)$, with $\frac{1}{r}+\frac{1}{r'}=1$.

The following estimate for fractional order derivative of product functions will be used later.

\begin{lemma}\label{PR}\cite[Lemma 3.1]{J}
    For given $p, r_1, r_2 \in (1,\infty)$ satisfing
    \begin{equation*}
        \frac{1}{p} = \frac{1}{r_1} + \frac{1}{q_1} = \frac{1}{r_2} + \frac{1}{q_2},
    \end{equation*}
    then for any fixed  $\nu>0$, there exists a positive constant $C$ such that
    \begin{equation*}
        ||\Lambda^\nu (fg)||_{L^p(\mathbb{R}^d)} \leq C \left( ||f||_{L^{q_1}(\mathbb{R}^d)} ||\Lambda^\nu g ||_{L^{r_1}(\mathbb{R}^d)} +||g||_{L^{q_2}(\mathbb{R}^d)} ||\Lambda^\nu f||_{L^{r_2}(\mathbb{R}^d)} \right).
    \end{equation*}
\end{lemma}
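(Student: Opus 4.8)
The inequality is the fractional Leibniz rule of Kato--Ponce type, and the plan is to establish it through a Bony paraproduct decomposition in the homogeneous Littlewood--Paley calculus. Fix a dyadic partition of unity and let $\Delta_j$ ($j\in\mathbb{Z}$) denote the associated homogeneous frequency projections, with $S_j=\sum_{i\le j}\Delta_i$ the low-frequency truncations. Decomposing each factor and grouping the resulting double sum according to the relative size of the two frequencies, I would write
\begin{equation*}
    fg = \sum_j S_{j-N}f\,\Delta_j g + \sum_j \Delta_j f\,S_{j-N}g + \sum_{|j-k|\le N}\Delta_j f\,\Delta_k g =: \pi_1+\pi_2+\pi_3,
\end{equation*}
for a fixed integer $N$, and then estimate $\Lambda^\nu$ of each piece separately. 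The guiding principle is that $\Lambda^\nu$ acts on a function with Fourier support in an annulus $|\xi|\sim 2^j$ essentially as multiplication by $2^{j\nu}$ (Bernstein's inequality), so in each term the derivative can be transferred onto the factor carrying the dominant frequency.

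For the low-high paraproduct $\pi_1$, every summand $S_{j-N}f\,\Delta_j g$ has Fourier support in an annulus $|\xi|\sim 2^j$; hence $\|\Lambda^\nu(S_{j-N}f\,\Delta_j g)\|\sim 2^{j\nu}\|S_{j-N}f\,\Delta_j g\|$, and by H\"older together with the uniform $L^{q_1}$-boundedness of $S_{j-N}$ one gets the pointwise bound $2^{j\nu}\|S_{j-N}f\|_{L^{q_1}}\|\Delta_j g\|_{L^{r_1}}$, where $2^{j\nu}\|\Delta_j g\|_{L^{r_1}}\sim\|\Delta_j\Lambda^\nu g\|_{L^{r_1}}$. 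Since the summands have almost disjoint annular supports, I would pass to $L^p$ via the square-function characterization of $\|\Lambda^\nu\pi_1\|_{L^p}$ and then separate the two factors using the Fefferman--Stein vector-valued maximal inequality, obtaining $\|\pi_1\|_{\Dot{W}^{\nu,p}}\lesssim\|f\|_{L^{q_1}}\|\Lambda^\nu g\|_{L^{r_1}}$. The high-low term $\pi_2$ is handled identically with the roles of $f$ and $g$ reversed, producing the second term $\|g\|_{L^{q_2}}\|\Lambda^\nu f\|_{L^{r_2}}$; the admissibility of both H\"older splittings is exactly the hypothesis $\tfrac1p=\tfrac1{r_1}+\tfrac1{q_1}=\tfrac1{r_2}+\tfrac1{q_2}$.

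The resonant term $\pi_3$ is the delicate one and is where I expect the main obstacle to lie. Unlike the paraproducts, a summand $\Delta_j f\,\Delta_k g$ with $|j-k|\le N$ has Fourier support only in a ball $|\xi|\lesssim 2^j$, not an annulus, so $\Lambda^\nu$ cannot be read off directly and the output may concentrate at frequencies far below the inputs. I would instead project the output onto dyadic scales $\Delta_m\pi_3$ and note that only the terms with $k\gtrsim m$ contribute; applying $\Lambda^\nu\Delta_m\sim 2^{m\nu}\Delta_m$ and summing the factor $2^{m\nu}$ over $m\le k$ is precisely where the assumption $\nu>0$ is indispensable, as it makes the geometric sum converge to $\sim 2^{k\nu}$ and thereby restores a derivative on $\Delta_k f$. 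A final application of the square-function equivalence and the Fefferman--Stein inequality bounds $\pi_3$ by either right-hand side, say $\|f\|_{L^{q_1}}\|\Lambda^\nu g\|_{L^{r_1}}$, completing the estimate. Throughout, the restrictions $p,q_i,r_i\in(1,\infty)$ are needed so that the Littlewood--Paley square-function equivalence of the $L^p$ norm and the vector-valued maximal inequality are available, and $\nu>0$ is used only in the resonant term.
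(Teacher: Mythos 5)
The paper contains no proof of this lemma for you to be compared against: it is quoted verbatim from \cite{J} (Ju, Lemma 3.1), which in turn goes back to the Kato--Ponce/Kenig--Ponce--Vega fractional Leibniz estimates, and the standard proof in that literature is exactly the paraproduct argument you sketch. So your route is not different from the paper's --- the paper simply outsources the proof --- and your sketch is the canonical one: the derivative is transferred to the high-frequency factor in the two paraproducts $\pi_1,\pi_2$; the resonant piece $\pi_3$ is where $\nu>0$ is genuinely needed, to sum $2^{m\nu}$ over output scales $m\lesssim k$; and the exponent restrictions enter through the Littlewood--Paley square-function equivalence of $\|\cdot\|_{L^p}$ and the Fefferman--Stein vector-valued maximal inequality, as you correctly identify. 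I find no gap in the structure of the argument.

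Two points you would need to make rigorous in a full write-up, both routine. First, the heuristic ``$\Lambda^\nu$ acts as $2^{j\nu}$ on an annulus $|\xi|\sim 2^j$'' should be implemented via the multiplier with symbol $2^{-j\nu}|\xi|^\nu\widetilde{\varphi}(2^{-j}\xi)$, where $\widetilde{\varphi}$ is a fattened annular cutoff; its Mikhlin--H\"ormander bounds are uniform in $j$ by scaling, which is what justifies exchanging $2^{j\nu}\Delta_j g$ for $\Delta_j\Lambda^\nu g$ in $L^{r_1}$, and passing from $\Lambda^\nu\pi_1$ to the square function uses the bounded overlap of the output annuli once $N$ is fixed large. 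Second, in the resonant term, after projecting with $\Delta_m$ and summing the geometric factor $2^{(m-k)\nu}$ you are left with a sequence convolution in $(m,k)$; Young's inequality for sequences together with Cauchy--Schwarz in $k$ and one more application of Fefferman--Stein reassembles $\|f\|_{L^{q_1}}\|\Lambda^\nu g\|_{L^{r_1}}$. Note also that the lemma leaves $q_1,q_2$ implicit (they lie in $(p,\infty]$, with $q_i=\infty$ when $r_i=p$); your argument only uses the Hardy--Littlewood maximal inequality in $L^{q_i}$, which holds for all $q_i\in(1,\infty]$, so nothing breaks at that endpoint, while the square-function machinery is only ever invoked in $L^p$ and $L^{r_i}$ with $p,r_i\in(1,\infty)$, consistent with the hypotheses.
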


Now, we recall the following space-time estimate for the generalized heat flow.

\begin{lemma}\label{PQ}\cite[Lemma 3.1]{MYZ}
    Let $1\leq q \leq p \leq \infty$ and $f \in L^q(\mathbb{T}^d)$, then for any $\nu \geq 0$, there exists a positive constant $C$ such that
    \begin{equation*}
        ||\Lambda^\nu e^{-t(-\Delta)^\al} f||_{L^p(\mathbb{R}^d)} \leq C t^{-\frac{\nu}{2\al} - \frac{d}{2\al}\left(\frac{1}{q}-\frac{1}{p}\right)}||f||_{L^q(\mathbb{R}^d)}.
    \end{equation*}
\end{lemma}

Using Lemma \ref{PQ} and the equivalence relation \eqref{N}, we immediately obtain that for any $\nu >0$, 
\begin{equation}\label{IH}
    \begin{aligned}
        ||e^{-t(-\Delta)^\al} f||_{W^{\nu,p}(\mathbb{R}^d)} & = ||(1-\Delta)^{\frac{\nu}{2}} e^{-t(-\Delta)^\al} f||_{L^p(\mathbb{R}^d)} \\
        & \lesssim  ||e^{-t(-\Delta)^\al} f||_{L^p(\mathbb{R}^d)} + ||\Lambda^\nu e^{-t(-\Delta)^\al} f||_{L^p(\mathbb{R}^d)} \\
        & \lesssim t^{- \frac{d}{2\al}\left(\frac{1}{q}-\frac{1}{p}\right)}||f||_{L^q(\mathbb{R}^d)} + t^{-\frac{\nu}{2\al} - \frac{d}{2\al}\left(\frac{1}{q}-\frac{1}{p}\right)}||f||_{L^q(\mathbb{R}^d)} \\
        & \lesssim \left( 1+t^{\frac{\nu}{2\al}}\right) t^{-\frac{\nu}{2\al} - \frac{d}{2\al}\left(\frac{1}{q}-\frac{1}{p}\right)}||f||_{L^q(\mathbb{R}^d)}.
    \end{aligned}
\end{equation}

\begin{lemma}\label{EY}\cite[Theorem 4 in Appendix]{S}
    Assume that $0<\tau<1$ and $1<p<q<\infty$ satisfy
    \begin{equation*}
        1-(\frac{1}{p}-\frac{1}{q})=\tau,
    \end{equation*}
    and define
    \begin{equation*}
        (I_\tau f)(t)=\int_{-\infty}^\infty f(s)|t-s|^{-\tau} ds,
    \end{equation*}
    then there exists a positive constant C depending only on $p$ and $q$ such that
    \begin{equation*}
        ||I_\tau f||_{L^q(\mathbb{R})} \leq C ||f||_{L^p(\mathbb{R})}.
    \end{equation*}
\end{lemma}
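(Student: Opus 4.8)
\textbf{Proof proposal for Lemma \ref{EY}.} The operator is the convolution $I_\tau f = f * K$ with the kernel $K(t)=|t|^{-\tau}$, so the statement is exactly the one-dimensional Hardy--Littlewood--Sobolev (fractional integration) inequality. The plan is to reproduce the standard proof based on a near/far splitting of the kernel combined with the Hardy--Littlewood maximal operator $M$, defined by $(Mf)(t)=\sup_{r>0}\frac{1}{2r}\int_{|t-s|<r}|f(s)|\,ds$. Alternatively one could simply note that $K\in L^{1/\tau,\infty}(\mathbb{R})$ (since $|\{\,|t|^{-\tau}>\lambda\,\}|=2\lambda^{-1/\tau}$, and $1/\tau>1$ because $0<\tau<1$) and invoke the generalized Young inequality for weak Lebesgue spaces; but the maximal-function argument is self-contained, so I would carry that one out.

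For a fixed $R>0$ split $I_\tau f(t)=A_R(t)+B_R(t)$ according to $|t-s|<R$ and $|t-s|\ge R$. For the near part, a dyadic decomposition of the ball into annuli $2^{-j-1}R\le|t-s|<2^{-j}R$, $j\ge0$, on each of which $|t-s|^{-\tau}\le(2^{-j-1}R)^{-\tau}$, gives
\begin{equation*}
    |A_R(t)|\lesssim \sum_{j\ge0}(2^{-j-1}R)^{-\tau}\,2^{-j}R\,(Mf)(t)\lesssim R^{1-\tau}(Mf)(t),
\end{equation*}
where the geometric series converges precisely because $1-\tau>0$. For the far part, Hölder's inequality yields
\begin{equation*}
    |B_R(t)|\le ||f||_{L^p}\Big(\int_{|u|\ge R}|u|^{-\tau p'}\,du\Big)^{1/p'}\lesssim ||f||_{L^p}\,R^{-1/q},
\end{equation*}
the integral being finite exactly when $\tau p'>1$, which is equivalent to $\tau>1-\frac1p$; under the scaling relation $\tau=1-(\frac1p-\frac1q)$ this reads $\frac1q>0$ and therefore always holds. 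A short computation of the remaining exponent also uses the scaling relation: $\frac{1}{p'}-\tau=-\frac1q$.

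It then remains to optimize in $R$. Balancing the two bounds forces $R^{1-\tau+\frac1q}=||f||_{L^p}/(Mf)(t)$, and since $1-\tau+\frac1q=\frac1p$ (again by the scaling relation) this gives a clean choice of $R$; substituting back produces the pointwise estimate
\begin{equation*}
    |I_\tau f(t)|\lesssim ||f||_{L^p}^{\,1-p/q}\,(Mf)(t)^{\,p/q}.
\end{equation*}
Raising to the power $q$, integrating in $t$, and applying the Hardy--Littlewood maximal inequality $||Mf||_{L^p}\lesssim||f||_{L^p}$ (valid for $p>1$) yields $||I_\tau f||_{L^q}^q\lesssim ||f||_{L^p}^{\,q-p}\,||Mf||_{L^p}^{\,p}\lesssim ||f||_{L^p}^{\,q}$, which is the claim. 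The one genuinely non-elementary input is the $L^p$-boundedness of $M$, which is where the hypothesis $p>1$ is essential (the estimate fails at $p=1$); everything else is a matter of bookkeeping with the exponents, and the main thing to verify carefully is that the balancing exponents collapse correctly under the constraint $1-(\frac1p-\frac1q)=\tau$, so I would double-check each exponent identity against that relation as I go.
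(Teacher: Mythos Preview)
Your proof is correct: this is the standard Hedberg-type argument for the one-dimensional Hardy--Littlewood--Sobolev inequality, and all of your exponent identities check out under the constraint $\tau=1-(\frac1p-\frac1q)$. Note, however, that the paper does not prove this lemma at all; it is quoted as a known result from Sogge's book \cite{S}, so there is no proof in the paper to compare against. Your write-up therefore supplies a self-contained argument where the paper simply cites the literature.
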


As we knew, the standard heat kernel $e^{t\Delta}$ has the maximal regularity estimate \cite[Theorem 7.3]{LR}. The following one   is also true for the generalized heat equation, one can see \cite[pp.1654-1655,1657]{HP} for the detail.

\begin{lemma}\label{MR}\cite[Lemma 2.2]{D}
    For any fixed $\al>0$, the operator 
    \begin{equation*}
        F \mapsto \int_0^t e^{-(t-s)(-\Delta)^{\al}} \Lambda^{2\al} F(s,x) ds,
    \end{equation*}
    is bounded from $L_T^q L^r(\mathbb{R}^d)$ to
    $L_T^q L^r(\mathbb{R}^d)$ for any given  $T>0$ and $q,r \in (1,\infty)$, where $L_T^q L^r(\mathbb{R}^d)$ denotes $L^q(0,T; L^r(\mathbb{R}^d))$ for simplicity.
\end{lemma}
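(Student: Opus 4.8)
The plan is to recognize the stated operator as the classical maximal $L^q$--$L^r$ regularity estimate for the fractional heat equation, and to prove it through an operator-valued Fourier multiplier argument. First I would write $A=(-\Delta)^\al=\Lambda^{2\al}$ and let $v(t)=\int_0^t e^{-(t-s)A}F(s)\,ds$ be the mild solution of $\partial_t v+Av=F$ with $v(0)=0$. Since the semigroup $e^{-(t-s)A}$ commutes with $A$,
\[
\int_0^t e^{-(t-s)A}\Lambda^{2\al}F(s)\,ds=A\int_0^t e^{-(t-s)A}F(s)\,ds=Av(t),
\]
so the asserted boundedness is exactly the maximal regularity bound $\|Av\|_{L_T^qL^r}\lesssim\|F\|_{L_T^qL^r}$ for $A$ on $L^r(\mathbb{R}^d)$. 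Because the operator is causal, it suffices to establish the estimate on the half-line $(0,\infty)$ with a constant independent of $T$ and then restrict, extending $F$ by zero outside $[0,T]$.

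Next I would pass to the time-frequency side. Extending $F$ by zero for $t<0$ and taking the Fourier transform in $t$ with dual variable $\tau$, the map $F\mapsto Av$ becomes the Fourier multiplier in time with operator-valued symbol $M(\tau)=A(i\tau+A)^{-1}$ acting on $L^r(\mathbb{R}^d)$. By the operator-valued Mikhlin multiplier theorem of Weis, the resulting operator is bounded on $L^q(\mathbb{R};L^r(\mathbb{R}^d))$ for every $q\in(1,\infty)$ as soon as the two families $\{M(\tau):\tau\neq0\}$ and $\{\tau M'(\tau):\tau\neq0\}=\{-i\tau A(i\tau+A)^{-2}:\tau\neq0\}$ are $\mathcal{R}$-bounded in $\mathcal{B}(L^r(\mathbb{R}^d))$.

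The heart of the argument, and the main obstacle, is verifying this $\mathcal{R}$-boundedness, which on $L^r$ with $r\neq2$ is strictly stronger than uniform norm boundedness and is the reason mere Plancherel-type reasoning fails. Here I would exploit that $A=(-\Delta)^\al$ is a spatial Fourier multiplier with symbol $|\xi|^{2\al}$: for each $\tau$ the operators $M(\tau)$ and $\tau M'(\tau)$ are Fourier multipliers with symbols $\frac{|\xi|^{2\al}}{i\tau+|\xi|^{2\al}}$ and $\frac{-i\tau|\xi|^{2\al}}{(i\tau+|\xi|^{2\al})^2}$. A direct computation shows that these symbols, together with all their $\xi$-derivatives weighted by the appropriate powers of $|\xi|$, are bounded by a constant independent of $\tau$, i.e. they satisfy the Mikhlin--H\"ormander condition uniformly in $\tau$. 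Since a family of scalar Fourier multipliers with uniformly bounded Mikhlin constants is $\mathcal{R}$-bounded on $L^r(\mathbb{R}^d)$ (a consequence of the Mikhlin theorem combined with Kahane's contraction principle and Khintchine's inequality), the required $\mathcal{R}$-boundedness follows. Feeding this into Weis's theorem yields boundedness on $L^q(\mathbb{R};L^r)$, and restricting back to $[0,T]$ closes the proof.

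Finally, I would point to a more structural route that avoids estimating Mikhlin constants by hand: $-\Delta$ admits a bounded $H^\infty$ functional calculus on $L^r(\mathbb{R}^d)$, and by the composition rule for the calculus so does the power $A=(-\Delta)^\al$; by the Kalton--Weis theorem a bounded $H^\infty$ calculus implies $\mathcal{R}$-sectoriality, which is precisely the input demanded above. Either way the decisive point is upgrading the uniform boundedness of the resolvent symbols to $\mathcal{R}$-boundedness, after which the operator-valued multiplier theorem delivers the $L^qL^r$ estimate automatically, uniformly in $T$.
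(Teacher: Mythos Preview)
The paper does not supply its own proof of this lemma; it is simply quoted from \cite{D}, with a pointer to \cite[pp.~1654--1655, 1657]{HP} (Hieber--Pr\"uss) for the underlying argument. Your proposal is correct and is the now-standard route to maximal $L^q$--$L^r$ regularity for sectorial operators.

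Compared with the Hieber--Pr\"uss argument referenced by the paper, your approach differs in organization. Hieber and Pr\"uss work with the full space-time symbol $\frac{|\xi|^{2\al}}{i\tau+|\xi|^{2\al}}$ as a scalar Fourier multiplier on $\mathbb{R}^{d+1}$, verify the Mikhlin--H\"ormander condition jointly in $(\tau,\xi)$, obtain $L^p(\mathbb{R}^{d+1})$ boundedness, and then pass to mixed norms. You instead decouple time from space via Weis's operator-valued multiplier theorem, reducing the problem to the $\mathcal{R}$-boundedness of the resolvent family $\{A(i\tau+A)^{-1}\}$ in $\mathcal{B}(L^r(\mathbb{R}^d))$, which you then verify by the scalar Mikhlin theorem in $\xi$ with constants uniform in $\tau$. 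Your route is more modular and is the one now used in most textbook treatments; the alternative you mention via the bounded $H^\infty$ calculus of $-\Delta$ combined with the Kalton--Weis theorem is the most efficient of all, since it bypasses any symbol computation. The referenced argument, by contrast, is more hands-on and avoids the $\mathcal{R}$-boundedness machinery entirely. Either approach yields a $T$-independent constant, which is all the paper needs here.
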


\begin{corollary}\label{MR1}\cite[Corollary 2.1]{LW}
    For any given $\zeta \geq 2\al > 0$, $T>0$ and $q,r \in (1,\infty)$,  the operator 
    \begin{equation*}
        F \mapsto \int_0^{\frac{t}{2}} t^{\frac{\zeta}{2\al}-1} e^{-(t-s)(-\Delta)^{\al}} \Lambda^\zeta F(s,x) ds,
    \end{equation*}
    is bounded from $L_{\frac{T}{2}}^q L^r(\mathbb{R}^d)$ to
    $L_T^q L^r(\mathbb{R}^d)$.
\end{corollary}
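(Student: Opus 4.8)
The final statement to prove is Corollary \ref{MR1}, which is a variant of the maximal regularity Lemma \ref{MR} adapted to an integral over $[0,t/2]$ with a $t$-dependent weight $t^{\frac{\zeta}{2\al}-1}$ and a fractional power $\Lambda^\zeta$ with $\zeta\ge 2\al$.

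\medskip

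The plan is to reduce the weighted operator to the standard maximal regularity operator of Lemma \ref{MR}. First I would split off the extra smoothing: since $\zeta \ge 2\al$, write $\Lambda^\zeta = \Lambda^{2\al}\,\Lambda^{\zeta-2\al}$, so that the integrand becomes $t^{\frac{\zeta}{2\al}-1} e^{-(t-s)(-\Delta)^\al}\Lambda^{2\al}\big(\Lambda^{\zeta-2\al}F(s,\cdot)\big)$. The idea is to move the scalar weight $t^{\frac{\zeta}{2\al}-1}$ and the surplus derivative $\Lambda^{\zeta-2\al}$ inside the semigroup in a controlled way. Because the integration variable $s$ only ranges over $[0,t/2]$, we have $t-s \ge t/2 \ge s$, so $t$ and $t-s$ are comparable to each other and bounded below by $s$; this comparability is what lets us trade the prefactor $t^{\frac{\zeta}{2\al}-1}$ against negative powers of $(t-s)$ produced by the smoothing estimate of Lemma \ref{PQ}.

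\medskip

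Concretely, the key step is to apply Lemma \ref{PQ} with $p=q=r$ and $\nu=\zeta-2\al\ge 0$ to the factor $e^{-(t-s)(-\Delta)^\al}\Lambda^{\zeta-2\al}$, which is part of the semigroup, obtaining a gain of $(t-s)^{-\frac{\zeta-2\al}{2\al}}$. On the region $s\in[0,t/2]$ one has $(t-s)^{-\frac{\zeta-2\al}{2\al}} \le (t/2)^{-\frac{\zeta-2\al}{2\al}} \lesssim t^{1-\frac{\zeta}{2\al}}$, which exactly cancels the weight $t^{\frac{\zeta}{2\al}-1}$. After this cancellation the operator is dominated, in a pointwise-in-time sense, by the operator $F\mapsto \int_0^{t/2} e^{-(t-s)(-\Delta)^\al}\Lambda^{2\al}F(s,\cdot)\,ds$; extending $F$ by zero outside $[0,T/2]$ and using that the truncated integral $\int_0^{t/2}$ is controlled by the full integral $\int_0^t$ (the generalized heat semigroup being a positive, bound-preserving convolution in the relevant estimates), the $L_T^q L^r$ boundedness then follows directly from Lemma \ref{MR}. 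Care is needed to handle the truncation of the domain cleanly: rather than arguing by crude pointwise domination of kernels, I would instead insert the factored smoothing estimate first and only then invoke Lemma \ref{MR} on the resulting unweighted $\Lambda^{2\al}$-operator, so that all the $L^q$-in-time integration is handled by the cited maximal regularity result and none of it has to be redone by hand.

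\medskip

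The main obstacle I anticipate is making rigorous the interchange of the scalar weight $t^{\frac{\zeta}{2\al}-1}$ and the operator-valued smoothing, since the weight depends on the outer variable $t$ while the smoothing gain naturally attaches to $t-s$ and $s$. The honest way to do this is to keep the bound $(t-s)^{-\frac{\zeta-2\al}{2\al}}\lesssim t^{1-\frac{\zeta}{2\al}}$ valid uniformly over $s\in[0,t/2]$, so that after pulling this estimate out the remaining kernel is precisely the one in Lemma \ref{MR}; one must verify that the constant in this comparison is independent of $t$ and $s$ (it is, being a pure power inequality on the region $t-s\ge t/2$), and that replacing the upper limit $t/2$ by $t$ only enlarges the estimand. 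Once these bookkeeping points are settled, the boundedness from $L_{T/2}^q L^r(\mathbb{R}^d)$ to $L_T^q L^r(\mathbb{R}^d)$ is immediate from Lemma \ref{MR}, with the constant inheriting the $q,r$ and $\al$ dependence there together with the harmless factor $2^{\frac{\zeta-2\al}{2\al}}$ from the weight comparison.
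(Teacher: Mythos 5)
Your cancellation computation is indeed the heart of the matter: factoring $\Lambda^{\zeta}=\Lambda^{2\al}\Lambda^{\zeta-2\al}$, gaining $(t-s)^{-\frac{\zeta-2\al}{2\al}}$ from the semigroup smoothing of Lemma \ref{PQ}, and using $t-s\geq t/2$ on the truncated region to cancel the weight $t^{\frac{\zeta}{2\al}-1}$ (with the harmless constant $2^{\frac{\zeta-2\al}{2\al}}$) is what any proof of Corollary \ref{MR1} must do, and it is presumably also the core of the cited proof in [LW] (the present paper does not reprove the corollary). The gap is in the bridge from this pointwise-in-$(t,s)$ cancellation to the $L_T^q L^r$ bound. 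Lemma \ref{PQ} is a \emph{norm} estimate, so once you ``insert the factored smoothing estimate'' at each fixed $s$ inside the integral, you are left with the scalar quantity $\int_0^{t/2}\big\| e^{-c(t-s)(-\Delta)^\al}\Lambda^{2\al}F(s)\big\|_{L^r}\,ds$, and Lemma \ref{MR} cannot be invoked on that: maximal regularity bounds the norm of the vector-valued integral, not the integral of the norms (the latter is exactly what Minkowski produces and what maximal regularity is designed to avoid — for the untruncated integral it diverges like $\int (t-s)^{-1}ds$). If instead you keep operators rather than norms, the smoothing factor you want to estimate, $e^{-\frac{t-s}{2}(-\Delta)^\al}\Lambda^{\zeta-2\al}$, depends on $s$ and cannot be pulled out of the integral; moreover, in your write-up the single semigroup $e^{-(t-s)(-\Delta)^\al}$ is consumed by the Lemma \ref{PQ} step and simultaneously retained for the ``unweighted $\Lambda^{2\al}$-operator,'' which is not coherent. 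Finally, the fallback claim that $\int_0^{t/2}$ is controlled by $\int_0^t$ by positivity is false: for $\al>1$ the kernel of $e^{-t(-\Delta)^\al}$ is not positive, and even for $\al\leq 1$ the input $\Lambda^{2\al}F$ is sign-changing, so no norm domination between the truncated and full vector-valued integrals holds; and writing $\int_0^{t/2}=\int_0^t$ applied to $\mathbf{1}_{\{s\leq t/2\}}F$ makes the input depend on $t$, again outside the scope of Lemma \ref{MR}.

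The missing idea that makes your intended reduction to Lemma \ref{MR} rigorous is to split the semigroup at the $s$-\emph{independent} time $t/2$ rather than at $(t-s)/2$: for $0\leq s\leq t/2$ one has $e^{-(t-s)(-\Delta)^\al}=e^{-\frac{t}{2}(-\Delta)^\al}\,e^{-(\frac{t}{2}-s)(-\Delta)^\al}$, so the corollary's operator equals the single operator $t^{\frac{\zeta}{2\al}-1}\Lambda^{\zeta-2\al}e^{-\frac{t}{2}(-\Delta)^\al}$ applied to $G(t/2)$, where $G(\si):=\int_0^{\si}e^{-(\si-s)(-\Delta)^\al}\Lambda^{2\al}F(s)\,ds$ is precisely the Lemma \ref{MR} operator. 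The outer factor has $L^r\to L^r$ norm $\lesssim t^{\frac{\zeta}{2\al}-1}(t/2)^{-\frac{\zeta-2\al}{2\al}}=2^{\frac{\zeta-2\al}{2\al}}$ uniformly in $t$ by Lemma \ref{PQ} with $\nu=\zeta-2\al$ (your cancellation, now attached to an $s$-independent operator), Lemma \ref{MR} on $(0,T/2)$ bounds $\|G\|_{L_{T/2}^q L^r}\lesssim\|F\|_{L_{T/2}^q L^r}$, and the change of variables $t\mapsto 2\si$ costs only $2^{1/q}$. Alternatively, you can bypass Lemma \ref{MR} entirely: Minkowski plus Lemma \ref{PQ} with $\nu=\zeta$ bounds the integrand by $(t-s)^{-\frac{\zeta}{2\al}}\|F(s)\|_{L^r}\lesssim t^{-\frac{\zeta}{2\al}}\|F(s)\|_{L^r}$ for $s\leq t/2$, so after the weight the time kernel is $\lesssim t^{-1}\mathbf{1}_{\{s\leq t/2\}}$, and Hardy's inequality on $L^q(0,T)$ (this is where $q>1$ enters) concludes — here the non-singularity of the truncated kernel, which you correctly observed, is what actually pays off.
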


The following one extends the well-known result \cite[Lemma 14.1]{LR} of the classical heat kernel.

\begin{lemma}\label{ML}\cite[Remark 2.1]{LW}
    For given $\al>0$ and $T>0$, the operator
    \begin{equation*}
        F \mapsto \int_0^t e^{-(t-s)(-\Delta)^\al} \Lambda^\al F(s,x) ds,
    \end{equation*}
    is bounded from $L_T^2 L^2(\mathbb{R}^d)$ to $L_T^\infty L^2(\mathbb{R}^d)$.
\end{lemma}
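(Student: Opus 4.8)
The plan is to establish this $L^2$ maximal-regularity-type bound by passing to the Fourier side, where both the generalized heat semigroup and the fractional derivative $\Lambda^\al$ become explicit Fourier multipliers, and then reducing the whole matter to a one-dimensional estimate in the time variable. Since the target estimate is measured in $L^2$ in space, Plancherel's theorem is the natural device, and the heart of the argument is that the smoothing exponent $\al$ in $\Lambda^\al$ exactly balances the decay produced by the semigroup $e^{-(t-s)(-\Delta)^\al}$.

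Concretely, denote the operator by $\mathcal{T}F(t,x) = \int_0^t e^{-(t-s)(-\Delta)^\al} \Lambda^\al F(s,x)\,ds$. Taking the spatial Fourier transform turns it into
\[
    \mathcal{F}[\mathcal{T}F](t,\xi) = |\xi|^\al \int_0^t e^{-(t-s)|\xi|^{2\al}} \mathcal{F}F(s,\xi)\,ds .
\]
For each fixed $\xi$ I would apply Cauchy--Schwarz in the time variable $s$, separating the exponential kernel from $\mathcal{F}F$:
\[
    \left| \int_0^t e^{-(t-s)|\xi|^{2\al}} \mathcal{F}F(s,\xi)\,ds \right|^2 \leq \left( \int_0^t e^{-2(t-s)|\xi|^{2\al}}\,ds \right) \int_0^t |\mathcal{F}F(s,\xi)|^2\,ds .
\]
The first factor integrates explicitly, via the substitution $r=t-s$, to $\tfrac{1 - e^{-2t|\xi|^{2\al}}}{2|\xi|^{2\al}} \leq \tfrac{1}{2|\xi|^{2\al}}$, and this is precisely where the gain $|\xi|^{-2\al}$ cancels the factor $|\xi|^{2\al}$ coming from the square of the multiplier $|\xi|^\al$.

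Multiplying through by $|\xi|^{2\al}$ then yields, uniformly in $t\in[0,T]$, the pointwise bound $|\mathcal{F}[\mathcal{T}F](t,\xi)|^2 \leq \tfrac12 \int_0^t |\mathcal{F}F(s,\xi)|^2\,ds$. Integrating in $\xi$, invoking Plancherel on both sides and Fubini to exchange the $s$- and $\xi$-integrals, gives
\[
    \|\mathcal{T}F(t)\|_{L_x^2}^2 \leq \tfrac12 \int_0^t \|F(s)\|_{L_x^2}^2\,ds \leq \tfrac12 \|F\|_{L_T^2 L^2}^2 .
\]
Since the right-hand side does not depend on $t$, taking the supremum over $t\in[0,T]$ delivers the claimed boundedness from $L_T^2 L^2(\mathbb{R}^d)$ to $L_T^\infty L^2(\mathbb{R}^d)$ (the computation may first be carried out for Schwartz $F$ and then extended by density). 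I do not anticipate a genuine obstacle here: the only delicate point is the exact matching of powers of $|\xi|$—the order $\al$ of the derivative is exactly half the order $2\al$ of the semigroup—so the estimate holds with constant $\tfrac12$ and requires no restriction on $\al>0$, and the rest is elementary bookkeeping.
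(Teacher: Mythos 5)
Your proof is correct: the Plancherel reduction together with Cauchy--Schwarz in the time variable, using $\int_0^t e^{-2(t-s)|\xi|^{2\al}}\,ds \le \tfrac{1}{2}|\xi|^{-2\al}$ to cancel the multiplier $|\xi|^{2\al}$, is exactly the standard argument behind this lemma. The paper itself does not reprove it---it cites \cite[Remark 2.1]{LW}, extending \cite[Lemma 14.1]{LR} for the classical heat kernel---and that underlying proof is the same Fourier-side computation you gave.
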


Finally, we recall the Serrin-type uniqueness result for the generalized Navier-Stokes equations, which was given in \cite[Theorem 3.2]{LW} when the space domain is the torus $\mathbb{T}^d$ with $d \geq 2$. From that proof, one can see that it is also true for the whole space $\mathbb{R}^d$  case. 

\begin{lemma}
    \label{UR}
    Let $\al > 0$ and $0 \leq \tau_1 < \tau_2$ be fixed. For any
    given divergence-free field $v_0 \in L^2(\mathbb{R}^d)$,  and 
    $$\psi \in L^\infty \left(\tau_1,\tau_2; L^2(\mathbb{R}^d)\right) \cap L^2 \left(\tau_1,\tau_2; H^\al(\mathbb{R}^d)\right) \cap L^m \left(\tau_1,\tau_2; W^{\be,r}(\mathbb{R}^d)\right),$$ 
    with $\frac{2\al}{m} + \frac{d}{r} = 2\al-1+\be$ for some
    \begin{equation}\label{be}
        \left\{
        \begin{aligned}
            & \be \in \left[1-\al, \frac{d}{r}+1-\al \right), \,\, r \in \left( \frac{d}{\al}, \infty \right), \qquad 0< \al \leq 1, \\
            & \be=0, \,\, r \in \left( \frac{d}{\al},\frac{d}{\al-1} \right], \qquad \qquad \qquad \qquad \qquad \,\, \al >1,
        \end{aligned}
        \right.
    \end{equation}
    assume that $v_1, v_2 \in L^\infty \left(\tau_1,\tau_2; L^2(\mathbb{R}^d)\right) \cap L^2 \left(\tau_1,\tau_2; H^\al(\mathbb{R}^d) \right)$ are two weak solutions of the following problem in the sense of $\mathcal{D}'(\mathbb{R}^d)$,
    \begin{equation}\label{UR11}
        \left\{
        \begin{aligned}
            & \partial_t v + (-\Delta)^\al v + B(v,v) + B(v,\psi) + B(\psi,v) + B(\psi,\psi) = 0, \\
            & \mathrm{div}\,v=0,\\
            &  \lim_{t \rightarrow {\tau_1}+0} ||v(t) - v_0||_{L^2(\mathbb{R}^d)} = 0.
          \end{aligned}
        \right.
    \end{equation}
    where $B(f,g)=P(f\cdot\nabla)g$  with $P$ being the Leray projection.  
    Furthermore, if $v_1 \in L^m \left(\tau_1,\tau_2; W^{\be,r}(\mathbb{R}^d) \right)$ and $v_2$ satisfies
    \begin{equation}\label{UR1}
    ||v_2(t)||_{L_x^2}^2 + 2\int_{\tau_1}^t ||\Lambda^\al v_2|| ds \leq ||v_2(\tau_1)||_{L_x^2}^2 + 2\int_{\tau_1}^t \langle B(v_2,v_2),\psi \rangle + \langle B(\psi,v_2),\psi \rangle ds,
    \end{equation}
    for any $t \in (\tau_1,\tau_2]$, then $v_1=v_2$ on $[\tau_1,\tau_2]$.
\end{lemma}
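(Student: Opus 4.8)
The plan is to prove this as a weak--strong uniqueness statement, in which $v_1$ plays the role of the ``strong'' solution (it carries the extra Serrin-type integrability $L^m W^{\be,r}$) while $v_2$ is merely an energy solution satisfying \eqref{UR1}. Writing $w = v_1 - v_2$, the goal is to derive a closed differential inequality for $\|w(t)\|_{L^2}^2$ and then invoke Gronwall together with the fact that $v_1(\tau_1)=v_2(\tau_1)=v_0$ forces $\|w(\tau_1)\|_{L^2}=0$.

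First I would produce the basic energy balance for $w$. Because $v_1 \in L^m(\tau_1,\tau_2;W^{\be,r})$ in addition to $L^\infty L^2\cap L^2 H^\al$, its extra integrability makes $\partial_t v_1$ lie in a dual space good enough to justify using $v_1$ itself and $v_2$ as admissible test functions after a standard time-mollification; this yields the energy \emph{equality} for $\|v_1\|_{L^2}^2$ and the \emph{identity} for the cross term $\langle v_1,v_2\rangle$, whereas for $v_2$ one only keeps the \emph{inequality} \eqref{UR1}. Combining the equality for $\|v_1\|^2$, the inequality for $\|v_2\|^2$, and the identity for $-2\langle v_1,v_2\rangle$, the fractional dissipation recombines as $\|\Lambda^\al w\|^2$ with the correct sign, the $B(\psi,\psi)$ contributions cancel, and all remaining quadratic terms collapse --- using $\mathrm{div}\,w=0$, $\langle(f\cdot\nabla)w,w\rangle=0$, and $v_2=v_1-w$ --- to
\begin{equation*}
  \|w(t)\|_{L^2}^2 + 2\int_{\tau_1}^t \|\Lambda^\al w\|_{L^2}^2\,ds \le -2\int_{\tau_1}^t \big[\langle (w\cdot\nabla)v_1, w\rangle + \langle (w\cdot\nabla)\psi, w\rangle\big]\,ds .
\end{equation*}
Crucially, the surviving critical terms are tested against the two functions $v_1$ and $\psi$ that \emph{both} belong to $L^m W^{\be,r}$.

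The heart of the proof is the trilinear estimate. For $f\in\{v_1,\psi\}$, integrate by parts (using $\mathrm{div}\,w=0$) to write $\langle (w\cdot\nabla)f,w\rangle = \langle w\otimes w,\nabla f\rangle$, peel off $\Lambda^\be$ onto $f$ via the boundedness of $\Lambda^{-1}\nabla$ on $L^r$, and reduce matters to $\|\Lambda^{1-\be}(w\otimes w)\|_{L^{r'}}\,\|f\|_{\Dot W^{\be,r}}$. Applying the fractional Leibniz rule (Lemma \ref{PR}) to $\Lambda^{1-\be}(w\otimes w)$ and interpolating the resulting factors between $L^2$ and $\Dot H^\al$ (via $\Dot H^\al\hookrightarrow L^{2d/(d-2\al)}$), the scaling relation $\frac{2\al}{m}+\frac{d}{r}=2\al-1+\be$ is exactly what forces the exponents to be $\|\Lambda^\al w\|_{L^2}^{2-2/m}$ and $\|w\|_{L^2}^{2/m}$, giving
\begin{equation*}
  |\langle (w\cdot\nabla)f,w\rangle| \lesssim \|f\|_{W^{\be,r}}\,\|w\|_{L^2}^{2/m}\,\|\Lambda^\al w\|_{L^2}^{2-2/m}.
\end{equation*}
The two regimes of \eqref{be} are handled uniformly: for $0<\al\le1$ one genuinely uses the fractional derivative $\Lambda^{1-\be}$ with $\be\ge 1-\al$, while for $\al>1$ one takes $\be=0$ and places a full integer gradient on $w\otimes w$, which is affordable precisely because $\al>1$; in both cases the admissibility of the interpolation exponents and the relation $m\ge 2$ (so that Young's inequality applies) are guaranteed by $r>d/\al$ together with the upper bound imposed on $\be$.

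Finally I would close the argument: apply Young's inequality with exponents $(\tfrac{m}{m-1},m)$ to absorb $\|\Lambda^\al w\|_{L^2}^2$ into the left-hand side, obtaining
\begin{equation*}
  \|w(t)\|_{L^2}^2 \le C\int_{\tau_1}^t \big(\|v_1\|_{W^{\be,r}}^m + \|\psi\|_{W^{\be,r}}^m\big)\,\|w\|_{L^2}^2\,ds ,
\end{equation*}
whose coefficient is integrable in time since $v_1,\psi\in L^m(\tau_1,\tau_2;W^{\be,r})$, so Gronwall and $\|w(\tau_1)\|_{L^2}=0$ yield $w\equiv0$. I expect the main obstacle to lie not in this estimate (its scaling is tailored to close) but in the rigorous justification of the first step for merely weak solutions: legitimizing the use of $v_1$ as a test function in the weak formulation of $v_2$ and vice versa, controlling each $\partial_t v_i$ in a suitable negative-order space, and handling the time-mollification and the limit $t\to\tau_1+$ so that the cross-term identity and the energy equality for $v_1$ hold with the claimed signs --- this is exactly where the extra integrability of $v_1$, rather than of $v_2$, is indispensable.
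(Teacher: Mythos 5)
Your proposal is correct and follows the canonical Serrin-type weak--strong uniqueness scheme: energy equality for the more regular solution $v_1$, the energy inequality \eqref{UR1} for $v_2$, a mollification-justified cross-term identity yielding the closed inequality for $w=v_1-v_2$ (with the $B(\psi,\psi)$ contributions cancelling), the trilinear estimate whose exponents $\|w\|_{L^2}^{2/m}\|\Lambda^\al w\|_{L^2}^{2-2/m}$ are exactly dictated by $\frac{2\al}{m}+\frac{d}{r}=2\al-1+\be$ and the admissibility ranges \eqref{be}, followed by Young absorption and Gr\"onwall from $w(\tau_1)=0$. Note that the present paper gives no proof of Lemma \ref{UR} at all --- it cites the torus version \cite[Theorem 3.2]{LW} and remarks that the argument transfers to $\mathbb{R}^d$ --- and your outline is precisely the argument that citation refers to, with the genuinely delicate point correctly identified as the rigorous justification of testing the two weak solutions against each other rather than the scaling of the estimates.
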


Specially, we have

\begin{corollary}\label{UR-1}
    Under the notations and hypotheses given in Lemma \ref{UR}, assume that
    $$v_1,v_2 \in L^\infty \left(\tau_1,\tau_2; L^2(\mathbb{R}^d)\right) \cap L^2 \left(\tau_1,\tau_2; H^\al(\mathbb{R}^d) \right) \cap L^m \left(\tau_1,\tau_2; W^{\be,r}(\mathbb{R}^d) \right),$$
    are two weak solutions to the problem \eqref{UR11}, then $v_1=v_2$ on $[\tau_1,\tau_2]$.
\end{corollary}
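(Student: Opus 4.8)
The plan is to deduce Corollary \ref{UR-1} from Lemma \ref{UR} by supplying the one hypothesis that is not directly assumed. Lemma \ref{UR} requires that one of the two solutions, say $v_1$, lie in the Serrin class $L^m(\tau_1,\tau_2;W^{\be,r}(\mathbb{R}^d))$ --- which is granted here for \emph{both} solutions --- while the other, $v_2$, satisfies the energy inequality \eqref{UR1}. Hence the only thing left to verify is that a weak solution $v_2\in L^\infty(\tau_1,\tau_2;L^2)\cap L^2(\tau_1,\tau_2;H^\al)\cap L^m(\tau_1,\tau_2;W^{\be,r})$ of the problem \eqref{UR11} automatically obeys \eqref{UR1}. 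Once this is established, Lemma \ref{UR} applies verbatim and yields $v_1=v_2$ on $[\tau_1,\tau_2]$.

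First I would show that such a $v_2$ in fact satisfies the energy \emph{equality}, which in particular gives \eqref{UR1}. Formally this comes from pairing the equation in \eqref{UR11} with $v_2$ itself and integrating over $(\tau_1,t)$. The divergence-free constraints on $v_2$ and $\psi$ make the terms $\langle B(v_2,v_2),v_2\rangle$ and $\langle B(\psi,v_2),v_2\rangle$ vanish, while integration by parts converts the remaining nonlinear contributions through the identities $\langle B(v_2,\psi),v_2\rangle=-\langle B(v_2,v_2),\psi\rangle$ and $\langle B(\psi,\psi),v_2\rangle=-\langle B(\psi,v_2),\psi\rangle$. Collecting the terms produces exactly
\begin{equation*}
    \|v_2(t)\|_{L_x^2}^2 + 2\int_{\tau_1}^t \|\Lambda^\al v_2\|_{L_x^2}^2\, ds = \|v_2(\tau_1)\|_{L_x^2}^2 + 2\int_{\tau_1}^t \big(\langle B(v_2,v_2),\psi\rangle + \langle B(\psi,v_2),\psi\rangle\big)\, ds,
\end{equation*}
the equality form of \eqref{UR1}.

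To make this rigorous I would regularize in time (e.g. by Steklov averaging) so that $v_2$ becomes an admissible test function, and then pass to the limit. The point is that every term above is finite and the limit passage is legitimate: using the Serrin scaling relation $\frac{2\al}{m}+\frac{d}{r}=2\al-1+\be$ with the range \eqref{be}, together with $v_2\in L^\infty L^2\cap L^2 H^\al$, Hölder's inequality and the fractional Leibniz estimate of Lemma \ref{PR} bound the pairings $\langle B(v_2,v_2),\psi\rangle$ and $\langle B(\psi,v_2),\psi\rangle$ in $L^1(\tau_1,\tau_2)$ and guarantee that $t\mapsto\|v_2(t)\|_{L_x^2}^2$ is absolutely continuous, so that $v_2\in C([\tau_1,\tau_2];L^2)$ and no energy is lost in the limit. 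This justification --- checking that the extra integrability is enough to turn $v_2$ into a legitimate test function and to preserve the full energy balance rather than only the inequality --- is the main and essentially only obstacle; the cancellation algebra itself is routine. With \eqref{UR1} in hand for $v_2$, Lemma \ref{UR} directly gives $v_1=v_2$.
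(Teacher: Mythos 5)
Your proposal is correct and follows exactly the route the paper intends: Corollary \ref{UR-1} is stated as an immediate specialization of Lemma \ref{UR}, the only point to check being that a weak solution lying in the Serrin class $L^m(\tau_1,\tau_2;W^{\be,r})$ (in addition to $L^\infty L^2\cap L^2 H^\al$) satisfies the energy inequality \eqref{UR1}, which your regularization argument and cancellation identities supply via the standard energy-equality mechanism. The algebra of the trilinear pairings and the use of the scaling relation $\frac{2\al}{m}+\frac{d}{r}=2\al-1+\be$ to justify integrability match what the paper's framework presupposes, so nothing further is needed.
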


\subsection{Estimates of the generalized heat flow $h^\om$.}
$~\ $

In this subsection, we give several estimates of the generalized heat flow $h^\omega=e^{-t(-\Delta)^\al} u_0^\omega$ defined in Theorem \ref{TW}. Before that, first we have

\begin{lemma}\label{NW}
    Let $\{g_k(\omega)\}_{k\in \mathbb{Z}^d}$ be a sequence of independent, mean zero, complex valued random variables defined on a given probability space $(\Omega,\mathcal{F}, \mathbb{P})$ satisfying \eqref{HN}, then there exists some positive constant $C$ such that for any $\{c_k\}_{k \in \mathbb{Z}^d} \in l^2$, one has
    \begin{equation*}
        \left|\left| \sum_{k \in \mathbb{Z}^d} c_k g_k(\omega) \right|\right|_{L_\omega^{r_s}} \leq C \left( \sum_{k \in \mathbb{Z}^d} |c_k|^2 \right)^\frac{1}{2},
    \end{equation*}
    with $r_s$ being given in \eqref{rs}.
\end{lemma}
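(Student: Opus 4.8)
The plan is to reduce everything to a single even-integer moment bound and then descend to the exponent $r_s$ by monotonicity of the $L^p(\Om)$ norms. Precisely, I would first prove that there is a constant $C_n>0$, depending only on $n$ and the bound in \eqref{HN}, such that for every finitely supported sequence $\{c_k\}$,
\begin{equation*}
    \mathbb{E}\left| \sum_{k} c_k g_k(\om) \right|^{2n} \leq C_n \left( \sum_k |c_k|^2 \right)^n.
\end{equation*}
Since $r_s \leq 2n$ by \eqref{rs} and $\mathbb{P}$ is a probability measure, H\"older's (equivalently Jensen's) inequality then yields $\big\| \sum_k c_k g_k \big\|_{L_\om^{r_s}} \leq \big\| \sum_k c_k g_k \big\|_{L_\om^{2n}} \leq C_n^{1/(2n)} \big( \sum_k |c_k|^2 \big)^{1/2}$. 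For a general $\{c_k\} \in \ell^2$ the estimate follows by applying this to the truncations $\{c_k\}_{|k| \leq N}$ and letting $N \to \infty$, the right-hand side being uniformly bounded; the same uniform bound shows that the series converges in $L_\om^{r_s}$.

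To establish the moment bound, I would expand the $2n$-th power via $|z|^{2n} = z^n \bar z^n$, writing
\begin{equation*}
    \mathbb{E}\left| \sum_k c_k g_k \right|^{2n} = \sum_{k_1,\dots,k_n,\, j_1,\dots,j_n} c_{k_1}\cdots c_{k_n}\, \bar c_{j_1}\cdots \bar c_{j_n}\, \mathbb{E}\big[ g_{k_1}\cdots g_{k_n}\, \bar g_{j_1}\cdots \bar g_{j_n} \big].
\end{equation*}
Because the $g_k$ are independent with mean zero, the expectation factorizes over distinct index values, and any value occurring with total multiplicity exactly one contributes a vanishing factor $\mathbb{E}[g_m]=0$ (or $\mathbb{E}[\bar g_m]=0$). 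Hence only those $2n$-tuples survive in which every index value appears at least twice; in particular at most $n$ distinct values occur.

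I would then organize the surviving terms according to the partition $\pi$ of the $2n$ positions into blocks sharing a common index value, each block having size at least $2$. For a fixed admissible $\pi$ with blocks $B_1,\dots,B_p$ (so that $\sum_\ell |B_\ell| = 2n$), the generalized H\"older inequality together with \eqref{HN} bounds each surviving expectation by $\prod_\ell \big(\mathbb{E}|g_{m_\ell}|^{2n}\big)^{|B_\ell|/(2n)} \leq C^{\sum_\ell |B_\ell|/(2n)} = C$, since the exponents sum to one. Summing the coefficients over distinct index values and using $\|c\|_{\ell^{p}} \leq \|c\|_{\ell^2}$ for $p \geq 2$ gives
\begin{equation*}
    \sum_{m_1,\dots,m_p} \prod_\ell |c_{m_\ell}|^{|B_\ell|} \leq \prod_\ell \Big( \sum_{m} |c_m|^{|B_\ell|} \Big) \leq \prod_\ell \|c\|_{\ell^2}^{|B_\ell|} = \|c\|_{\ell^2}^{2n},
\end{equation*}
so each partition contributes at most $C\,\|c\|_{\ell^2}^{2n}$. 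As the number of admissible partitions is a finite constant depending only on $n$, summing over $\pi$ delivers the claimed moment bound.

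The computations are routine; the only point requiring care is the combinatorial bookkeeping in the last step — identifying exactly which tuples survive the mean-zero cancellation and checking that the accompanying coefficient sums collapse to $\|c\|_{\ell^2}^{2n}$ irrespective of the individual block sizes. The complex conjugates play no genuine role, since every bound is taken in absolute value.
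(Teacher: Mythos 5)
Your proposal is correct. Note, however, that the paper does not actually prove this lemma: its entire ``proof'' is the citation to Lemma 4.2 of Burq--Tzvetkov \cite{BT}, so what you have written is a self-contained reconstruction of that black box, and your even-moment expansion (kill all tuples containing a singleton index by independence and mean zero, organize survivors by partitions into blocks of size at least two, bound each surviving expectation by $C$ via \eqref{HN} and Jensen, collapse the coefficient sums using $\|c\|_{\ell^{m}}\leq\|c\|_{\ell^{2}}$ for $m\geq 2$, then descend from $L_\omega^{2n}$ to $L_\omega^{r_s}$ by H\"older on the probability space using $r_s\leq 2n$ from \eqref{rs}) is precisely the standard proof of the cited result, so in substance the two routes coincide. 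Two cosmetic points: within a single block the factors are powers of one variable, so after factorizing the expectation over blocks by independence the estimate $|\mathbb{E}[\cdots]|\leq(\mathbb{E}|g_{m_\ell}|^{2n})^{|B_\ell|/(2n)}$ is just Jensen for that one variable (your ``generalized H\"older'' across blocks also works, and then independence is not even needed at that step); and in the passage to general $\{c_k\}\in\ell^2$ you should say explicitly that applying the uniform bound to tail differences $\sum_{N<|k|\leq M}c_k g_k$ shows the partial sums are Cauchy in $L_\omega^{2n}$ --- uniform boundedness alone does not give convergence, though your estimate yields the Cauchy property immediately.
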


    This result can be directly obtained by using Lemma 4.2 given in \cite{BT}.

Now, let us consider $h^\omega=e^{-t(-\Delta)^\al} u_0^\omega$, the solution to the following linear problem
\begin{equation}\label{FE}
    \left\{
    \begin{aligned}
        \partial_t h + (-\Delta)^\al h & = 0, \\
        h(0) & = u_0^\om,
    \end{aligned}
    \right.
\end{equation}
we have the following results.

\begin{lemma}\label{SE}
    Assume that $\{g_k(\omega)\}_{k\in \mathbb{Z}^d}$ satisfies the condition given in \eqref{HN}, $u_0 \in \Dot{H}_\si^s(\mathbb{R}^d)$ with $d \geq 2$ and $s \in (-\al+(1-\al)_+,0)$, and one of the following holds
    
    (1) $2\leq a, p \leq r_s < \infty$ and $\rho,\eta \in \mathbb{R}$ satisfying $\eta - 2 \al \rho -\frac{2\al}{a} \leq s$, $\rho a > -1$.

    (2) $a=\infty$, $2 \leq p \leq r_s$ and $\rho , \eta \in \mathbb{R}$ satisfying $\eta - 2 \al \rho \leq s$.
    
\noindent Then there exists a positive constant $C$ such that
    \begin{equation}\label{SE1}
        ||h^\omega||_{L_\omega^{r_s} L_{\rho;T}^a \Dot{W}_x^{\eta,p} } \leq C T^\si ||u_0||_{\Dot{H}^s},
    \end{equation}
    where $\si = {\frac{1}{2\al}\left( s-\left( \eta - 2\al \rho - \frac{2\al}{a} \right) \right)} \geq 0$, and
    $\|f\|_{L_{\rho;T}^a}$ denotes
    $\|t^\rho f(t)\|_{L^a(0, T)}$.
    Furthermore, we have
    \begin{equation}\label{SE2}
        \mathbb{P}(E_{\la,T}) \leq C T^{\si r_s} \frac{||u_0||_{\Dot{H}^s}^{r_s}}{\la^{r_s}}, \quad \forall \lambda>0,
    \end{equation}
    where $E_{\la,T}=\left\{ \omega \in \Omega \, : \, ||h^\omega||_{L_{\rho;T}^a \Dot{W}_x^{\eta,p} } \geq \la \right\}$.
\end{lemma}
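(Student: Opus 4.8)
The strategy is to use the Khinchin-type inequality of Lemma \ref{NW} together with the generalized Minkowski inequality to reduce \eqref{SE1} to a deterministic square-function estimate, and then to carry out a pure Fourier-side computation exploiting the fact that each block $\varphi(\Lambda-k)u_0$ is localized at \emph{unit} frequency scale. First I would expand $h^\omega=\sum_{k\in\mathbb{Z}^d}g_k(\omega)h_k$ with $h_k:=e^{-t(-\Delta)^\al}\varphi(\Lambda-k)u_0$. Since $2\le a,p\le r_s$, the generalized Minkowski inequality lets me move the $L_\omega^{r_s}$ norm to the innermost position, so that
\[
\|h^\omega\|_{L_\omega^{r_s}L_{\rho;T}^a\Dot{W}_x^{\eta,p}}\le\Big\|\,\|\Lambda^\eta h^\omega(t,x)\|_{L_\omega^{r_s}}\,\Big\|_{L_{\rho;T}^a L_x^p}.
\]
For fixed $(t,x)$ the random variable $\Lambda^\eta h^\omega(t,x)=\sum_k g_k(\omega)\Lambda^\eta h_k(t,x)$ is handled by Lemma \ref{NW}, producing the pointwise square function $\big(\sum_k|\Lambda^\eta h_k(t,x)|^2\big)^{1/2}$. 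Because $a,p\ge2$, a further application of Minkowski (first in $x$, then in $t$) pulls the $\ell^2$-sum outside, reducing matters to bounding $\big(\sum_k\|\Lambda^\eta h_k\|_{L_{\rho;T}^a L_x^p}^2\big)^{1/2}$.

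The core of the argument is the single-block estimate, and what I expect to be the main obstacle is the decisive observation that $\widehat{\varphi(\Lambda-k)u_0}$ is supported in the unit ball $B(k,1)$, a translate of a fixed ball rather than a ball of radius $\sim|k|$. Writing $h_k=\widetilde\varphi(\Lambda-k)h_k$ with $\widetilde\varphi$ a fattened cutoff equal to $1$ on $B(0,1)$, the convolution kernel $\mathcal{F}^{-1}\{\widetilde\varphi(\cdot-k)\}=e^{ik\cdot x}\mathcal{F}^{-1}\widetilde\varphi$ has an $L^r$ norm independent of $k$, so Young's inequality yields $\|\Lambda^\eta h_k\|_{L_x^p}\lesssim\|\Lambda^\eta h_k\|_{L_x^2}$ \emph{uniformly in} $k$ for $p\ge2$; there is no Bernstein loss in $|k|$. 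Then Plancherel and Minkowski ($a\ge2$) give
\[
\|\Lambda^\eta h_k\|_{L_{\rho;T}^a L_x^2}\le\Big(\int_{B(k,1)}\big\|t^\rho|\xi|^\eta e^{-t|\xi|^{2\al}}\big\|_{L^a(0,T)}^2\,|\widehat{\varphi(\Lambda-k)u_0}(\xi)|^2\,d\xi\Big)^{1/2}.
\]

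In case (1) I would bound the inner $L^a(0,T)$ norm by Hölder with the conjugate exponent $a'\ge a$ determined by $\tfrac1{a'}=\tfrac1{2\al}(\eta-2\al\rho-s)$, picking up the factor $T^{1/a-1/a'}=T^\si$, and then compute $\|t^\rho|\xi|^\eta e^{-t|\xi|^{2\al}}\|_{L^{a'}(0,\infty)}=C|\xi|^{\eta-2\al\rho-2\al/a'}=C|\xi|^s$ by the scaling $\tau=|\xi|^{2\al}t$; here the integral $\int_0^\infty\tau^{a'\rho}e^{-a'\tau}\,d\tau$ is finite precisely because $a'\rho>-1$, which is where the hypothesis $\rho a>-1$ (in the paper, $\rho\ge0$) enters. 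In case (2), $a=\infty$, the same bound $\lesssim T^\si|\xi|^s$ follows by directly maximizing $t^\rho e^{-t|\xi|^{2\al}}$ over $t\in(0,T)$. Either way, $\|\Lambda^\eta h_k\|_{L_{\rho;T}^a L_x^p}\lesssim T^\si\|\varphi(\Lambda-k)u_0\|_{\Dot{H}^s}$.

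Finally, the almost-orthogonality of the partition $\sum_k\varphi(\xi-k)=1$ (finite overlap of supports) gives $\sum_k\|\varphi(\Lambda-k)u_0\|_{\Dot{H}^s}^2\sim\|u_0\|_{\Dot{H}^s}^2$, which summed over $k$ yields \eqref{SE1} with the stated $\si\ge0$. The tail bound \eqref{SE2} is then immediate from Markov's inequality applied in $\omega$, namely $\mathbb{P}(E_{\la,T})\le\la^{-r_s}\|h^\omega\|_{L_\omega^{r_s}L_{\rho;T}^a\Dot{W}_x^{\eta,p}}^{r_s}$, into which \eqref{SE1} is inserted. The only delicate points are the two Minkowski interchanges (guaranteed by $2\le a,p\le r_s$) and the convergence of the time integral at $t=0$ (guaranteed by $\rho a>-1$); everything else is bookkeeping of exponents.
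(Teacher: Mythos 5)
Your proposal is correct and follows essentially the same route as the paper's proof: generalized Minkowski plus the Khinchin-type Lemma \ref{NW} to reduce to the square function $\bigl(\sum_k\|t^\rho\Lambda^\eta e^{-t(-\Delta)^\al}\varphi(\Lambda-k)u_0\|_{L_T^aL_x^p}^2\bigr)^{1/2}$, a $k$-uniform Bernstein bound $L_x^p\lesssim L_x^2$ on the unit-scale blocks, Plancherel with Hölder in time and the scaling $\tau=t|\xi|^{2\al}$ to extract $T^\si|\xi|^{s-\eta}$, summation via $\sum_k\varphi(\xi-k)=1$, and Chebyshev for \eqref{SE2}. Your two small variants — proving uniform Bernstein by a fattened cutoff and Young's inequality, and handling $a=\infty$ by directly maximizing $t^\rho|\xi|^{\eta-s}e^{-t|\xi|^{2\al}}$ on the Fourier side instead of invoking the semigroup estimate of Lemma \ref{PQ} — are computationally equivalent to the paper's steps.
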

\begin{proof}

(1) When $2 \leq a \leq r_s < \infty$, by using the generalized Minkowski's inequality and Lemma \ref{NW}, one gets

\begin{equation}\label{H1}
    \begin{aligned}
        ||h^\omega||_{L_\omega^{r_s} L_{\rho; T}^a \Dot{W}_x^{\eta,p}} & \lesssim \left|\left|   \sum_{k \in \mathbb{Z}^d} g_k(\om) t^\rho \Lambda^\eta e^{-t(-\Delta)^\al} \varphi (\Lambda - k) u_0 \right|\right|_{L_T^a L_x^{p} L_\omega^{r_s}} \\
    & \lesssim \left( \sum_{k \in \mathbb{Z}^d} \left|\left| t^\rho \Lambda^\eta e^{-t(-\Delta)^\al} \varphi (\Lambda - k) u_0 \right|\right|_{L_T^a L_x^{p}}^2 \right)^{\frac{1}{2}}.
    \end{aligned}
\end{equation}
Using Bernstein's inequality and Plancherel's identity, we obtain
\begin{align*}
    \left|\left|e^{-t(-\Delta)^\al} \Lambda^\eta \varphi (\Lambda - k) u_0 \right|\right|_{L_x^p} &
    \leq C \left|\left|e^{-t(-\Delta)^\al} \Lambda^\eta \varphi (\Lambda - k) u_0 \right|\right|_{L_x^2} \\
        & = C \left|\left|e^{-t|\xi|^{2\al}} |\xi|^\eta \varphi (\xi - k) \widehat{u_0}(\xi) \right|\right|_{L_\xi^2},
\end{align*}
where $C$ is a positive constant which is independent of $k$. Thus
\begin{equation}\label{H2}
    \mathrm{RHS} \,\,  \mathrm{of} \,\,  \eqref{H1} \lesssim \left( \sum_{k \in \mathbb{Z}^d} \left|\left| t^\rho e^{-t|\xi|^{2\al}} |\xi|^\eta \varphi (\xi - k) \widehat{u_0}(\xi) \right|\right|_{L_T^a L_\xi^2}^2 \right)^{\frac{1}{2}}.
\end{equation}
By using Hölder's inequality, it follows
        \begin{equation*}
            \left|\left| t^\rho e^{-t|\xi|^{2\al}} \right|\right|_{L_T^a} \leq T^{\frac{1}{a} - \frac{1}{a'}} \left|\left| t^\rho e^{-t|\xi|^{2\al}} \right|\right|_{L_T^{a'}},
        \end{equation*}
where $\frac{1}{a'} = \frac{1}{2\al} (\eta - 2\al \rho - s) \leq \frac{1}{a}$. It is obvious that $\frac{1}{a} - \frac{1}{a'} = \si$ and
\begin{align*}
    \left|\left| t^\rho e^{-t|\xi|^{2\al}} \right|\right|_{L_T^{a'}} & = \left( \int_0^T t^{a'\rho} e^{-a't|\xi|^{2\al}} \, dt \right) ^{\frac{1}{a'}} \\
        & = \left( \int_0^T |\xi|^{-2\al a' \rho} \left( t|\xi|^{2\al} \right)^{a'\rho} e^{-a't|\xi|^{2\al}} \, \frac{d\left(t|\xi|^{2\al}\right)}{|\xi|^{2\al}} \right) ^{\frac{1}{a'}} \\
        & \lesssim |\xi|^{-2\al \rho - \frac{2\al}{a'}} \left( \int_0^\infty \tau^{a' \rho} e^{-a'\tau} \, d\tau \right)^{\frac{1}{a'}} 
        \lesssim 
        |\xi|^{s-\eta},
\end{align*}
from which and \eqref{H2} one obtains
\begin{align*}
    \mathrm{RHS} \,\,  \mathrm{of} \,\,  \eqref{H1} & \lesssim T^\si \left( \sum_{k \in \mathbb{Z}^d} \int_{\mathbb{R}^d} \varphi^2(\xi - k) |\xi|^{2s} |\widehat{u_0}(\xi)|^{2} \, d\xi \right)^{\frac{1}{2}}.
\end{align*}
Note that $0\le \varphi(\xi) \leq 1$ and $\sum_{k \in \mathbb{Z}^d} \varphi(\xi - k) = 1$, we conclude \begin{equation}\label{SE-1}
    ||h^\omega||_{L_\omega^{r_s} L_{\rho; T}^a \Dot{W}_x^{\eta,p}} \lesssim T^\si \left( \int_{\mathbb{R}^d}  |\xi|^{2s} |\widehat{u_0}(\xi)|^{2} \, d\xi \right)^{\frac{1}{2}} \lesssim T^\si \, ||u_0||_{\Dot{H}^s}.
\end{equation}

(2) In the case $a = \infty$, $2\leq p \leq r_s$ and $\eta -2\al \rho \leq s$, by using Lemma \ref{PQ} we get
\begin{align*}
    ||h^\omega||_{L_{\rho; T}^\infty \Dot{W}_x^{\eta,p}} 
 & = \sup_{t \in [0,T]} t^\rho \left|\left| \Lambda^\eta e^{-t(-\Delta)^\al} \sum_{k\in \mathbb{Z}^d} g_k(\omega) \varphi (\Lambda -k) u_0 \right|\right|_{L_x^p} \\
 & = \sup_{t \in [0,T]} t^\rho \left|\left| \Lambda^{\eta-s} e^{-t(-\Delta)^\al} \sum_{k\in \mathbb{Z}^d} g_k(\omega) \varphi (\Lambda -k) \Lambda^s u_0 \right|\right|_{L_x^p} \\
 & \lesssim \sup_{t \in [0,T]} t^\rho \cdot t^{-\frac{\eta-s}{2\al}} \left|\left| \sum_{k\in \mathbb{Z}^d} g_k(\omega) \varphi (\Lambda -k) \Lambda^s u_0 \right|\right|_{L_x^p} \\
 & \lesssim T^{\frac{1}{2\al}(s-(\eta-2\al \rho))} \left|\left| \sum_{k\in \mathbb{Z}^d} g_k(\omega) \varphi (\Lambda -k) \Lambda^s u_0 \right|\right|_{L_x^p}.
\end{align*}
By using generalized Minkowski's inequality, Lemma \ref{NW} and Bernstein's inequality, one concludes
\begin{equation}\label{SE-2}
    \begin{aligned}
        ||h^\omega||_{L_\omega^{r_s} L_{\rho; T}^\infty \Dot{W}_x^{\eta,p}} & \lesssim T^\si \left|\left| \sum_{k\in \mathbb{Z}^d} g_k(\omega) \varphi (\Lambda -k) \Lambda^s u_0 \right|\right|_{L_x^p L_\om^{r_s}} \\
        & \lesssim T^\si \left( \sum_{k\in \mathbb{Z}^d} \left|\left| \varphi (\Lambda -k) \Lambda^s u_0 \right|\right|_{L_x^p}^2 \right)^{\frac{1}{2}} \\
        & \lesssim T^\si \left( \sum_{k\in \mathbb{Z}^d} \left|\left| \varphi (\Lambda -k) \Lambda^s u_0 \right|\right|_{L_x^2}^2 \right)^{\frac{1}{2}} \\
        & \lesssim T^\si \left( \sum_{k\in \mathbb{Z}^d}  \int_{\mathbb{R}^d}\varphi^2(\xi -k) |\xi|^{2s} |\widehat{u_0}(\xi)|^2 \, d\xi \right)^{\frac{1}{2}} \\
        & \lesssim T^\si \, ||u_0||_{\Dot{H}^s},
    \end{aligned}
\end{equation}
the last inequality holds since $\varphi(\xi) \leq 1$ and $\sum_{k \in \mathbb{Z}^d} \varphi(\xi - k) = 1$.

Combining \eqref{SE-1} and \eqref{SE-2} we obtain the estimate \eqref{SE1}, from which one gets that the inequality \eqref{SE2} holds by using the Bienaymé–Tchebichev inequality.

\end{proof}

For simplicity, from now we shall use the following notations:
\begin{equation}\label{DPA}
    a = \frac{4\al}{2\al(\mu+1)-1}, ~\ p = \frac{2d}{2\al(1-\mu)-1}, ~\ b = \frac{4\al}{1-2\al\mu}
\end{equation}
and
\begin{equation}\label{Q}
    q = \frac{2d}{d+1-2\al(1-\mu)},\quad \la=\frac{2d}{d+1-2\al(\mu+1)}
\end{equation}
 where $\mu$ is defined in \eqref{mu}. It is clear that
\begin{equation}\label{AB}
    \frac{1}{a} + \frac{1}{b} = \frac{1}{2}, ~\ \ \frac{1}{p} + \frac{1}{q} = \frac{1}{2},
\end{equation}
and
\begin{equation}\label{AP}
    \frac{2\al}{a} + \frac{d}{p} = 2\al - 1.
\end{equation}

For any fixed $T\in (0, +\infty]$, introduce the solution spaces,
\begin{equation}\label{Y}
    Y_T := \left\{
    \begin{aligned}
        & Y_{T,1},~\ \ \ \ \al \in (\frac{2}{3},1], s \in [-\frac{\al}{2},0) ~\ \mathrm{or} ~\ \al \in (\frac{1}{2},\frac{2}{3}], s \in (1-2\al,0), \\
        & Y_{T,2},~\ \ \ \ \al \in (\frac{2}{3},1], s \in (1-2\al,-\frac{\al}{2}), \\
        & Y_{T,3}, ~\ \ \ \ \al \in ( 1,\frac{d+2}{4} ], s \in [-1,0), \\
        & Y_{T,4}, ~\ \ \ \ \al \in ( 1,\frac{d+2}{4} ], s \in (-\al,-1),
    \end{aligned}
    \right.
\end{equation}
where $Y_{T,j}$ $(j=1,2,3,4)$ are defined by
\begin{align*}
    Y_{T,1} & := L_T^a L_x^p \cap L_T^a L_x^\la \cap L_T^a L_x^q \cap L_T^b W_x^{1-\al(2\mu+1),p} \cap L_{\frac{3\mu+1}{4}-\frac{1}{8\al};T}^{\frac{8\al}{1-2\al(\mu-1)}} W_x^{\frac{1}{2},p}, \\
    Y_{T,2} & := L_T^a L_x^p \cap L_T^a L_x^\la \cap L_T^a L_x^q \cap L_{1-\frac{1}{2\al};T}^a W_x^{2\al-1,p}, \\
    Y_{T,3} & := L_T^a L_x^p \cap L_T^a L_x^\la \cap L_T^a L_x^q \cap L_T^b L_x^p \cap L_{\frac{1}{2\al};T}^a W_x^{1,p}, \\
    Y_{T,4} & := Y_{T,3} \cap L_T^a L_x^{\frac{2d}{2\al(1-\mu)-2s-3}},
\end{align*}
endowed with the natural norms
\begin{align*}
    ||f||_{Y_{T,1}} & := ||f||_{L_T^a L_x^p} + ||f||_{L_T^a L_x^\la} + ||f||_{L_T^a L_x^q} +||f||_{L_T^b W_x^{1-\al(2\mu+1),p}} +||f||_{L_{\frac{3\mu+1}{4}-\frac{1}{8\al};T}^{\frac{8\al}{1-2\al(\mu-1)}} W_x^{\frac{1}{2},p}}, \\
    ||f||_{Y_{T,2}} & := ||f||_{L_T^a L_x^p} + ||f||_{L_T^a L_x^\la} + ||f||_{L_T^a L_x^q} +||f||_{L_{1-\frac{1}{2\al};T}^a W_x^{2\al-1,p}}, \\
    ||f||_{Y_{T,3}} & := ||f||_{L_T^a L_x^p} + ||f||_{L_T^a L_x^\la} + ||f||_{L_T^a L_x^q} + ||f||_{L_T^b L_x^p} + ||f||_{L_{\frac{1}{2\al};T}^a W_x^{1,p}}, \\
    ||f||_{Y_{T,4}} & := ||f||_{Y_{T,3}} + ||f||_{L_T^a L_x^{\frac{2d}{2\al(1-\mu)-2s-3}}}.
\end{align*}

In addition, we shall also another function spaces,
\begin{align*}
    X_{T,1} & := L_{\mu;T}^{\frac{4\al}{2\al(1-\mu)-1}} L_x^p \cap L_{\mu;T}^{\frac{4\al}{2\al(1-\mu)-1}} L_x^q, \\
    X_{T,2} & := L_{-\frac{s}{2\al};T}^\infty L_x^2 \cap L_{-\frac{s}{2\al};T}^2 H_x^\al,
\end{align*}
endowed with the natural norms
\begin{align*}
    ||f||_{X_{T,1}} & := ||f||_{L_{\mu;T}^{\frac{4\al}{2\al(1-\mu)-1}} L_x^p} + ||f||_{L_{\mu;T}^{\frac{4\al}{2\al(1-\mu)-1}} L_x^q}, \\
    ||f||_{X_{T,2}} & := ||f||_{L_{-\frac{s}{2\al};T}^\infty L_x^2} + ||f||_{L_{-\frac{s}{2\al};T}^2 H_x^\al}.
\end{align*}

Now, let us give a remark for the motivation of introducing these spaces. Inspired by \cite{NPS}, we study the problem \eqref{DE} of the difference $w=u-h^\om$ instead of $u$ to prove Theorem \ref{TW}, see also \cite{DZ,WW,LW}. Similar to \cite{ZF1}, one can construct a mild solution $w_1$ to the problem \eqref{DE} in $L_\tau^a L_x^p$ with $a$ and $p$ satisfying \eqref{AP} and $\tau > 0$ small. However, for example, when $\al \in (\frac{2}{3},1], s \in [-\frac{\al}{2},0)$ or $\al \in (\frac{1}{2},\frac{2}{3}], s \in (1-2\al,0)$, we need additional regularity, i.e. $w_1$ and $h^\om$ belong to $L_\tau^a L_x^\la \cap L_\tau^a L_x^q \cap L_\tau^b W_x^{1-\al(2\mu+1),p} \cap X_{\tau,1}$, to ensure that $w_1$ is actually a weak solution on $[0,\tau]$ in the sense of Definition \ref{DEWS}. Note that $w_1,h^\om \in L_\tau^a L_x^p$ is not enough to obtain that $w_1$ lies in $X_{\tau,1}$, thus we need an auxiliary space, i.e. $L_{\frac{3\mu+1}{4}-\frac{1}{8\al};\tau}^{\frac{8\al}{1-2\al(\mu-1)}} W_x^{\frac{1}{2},p}$, to guarantee $w_1 \in X_{\tau,1}$. In addition, the regularity $w_1 \in L_\tau^a L_x^\la \cap L_\tau^a L_x^q \cap L_\tau^b W_x^{1-\al(2\mu+1),p}$ cannot be obtained by using $w_1 \in L_\tau^a L_x^p \cap L_{\frac{3\mu+1}{4}-\frac{1}{8\al};\tau}^{\frac{8\al}{1-2\al(\mu-1)}} W_x^{\frac{1}{2},p}$, that's why we define $Y_T$ as in \eqref{Y}. Next, we shall use a fixed point argument in $Y_\tau$ to get a mild solution $w_1$ defined in a short time interval $[0,\tau]$. Moreover,   the singularity of the mild solution $w_1$ disappears as $t>0$, thus one can use the standard energy method to obtain a weak solution $w_2$ defined on $[\frac{\tau}{2},T]$ for any fixed $T>0$. Finally, in order to get a well-defined weak solution on $[0,T]$, we additionally need to estimate $h^\om$ in $X_{T,2}$. That's why we introduce the above spaces.

With the help of Lemma \ref{SE}, we have the following properties for $h^\om$, the proof is similar to that given in \cite[Proposition 2.1]{LW}, thus we omit the detail.

\begin{proposition}\label{PH}
    Assume that $\{g_k(\omega)\}_{k\in \mathbb{Z}^d}$ satisfies the condition given in \eqref{HN}, $u_0 \in \Dot{H}_\si^s(\mathbb{R}^d)$ with $d \geq 2$ and $s \in (-\al+(1-\al)_+,0)$, then for any fixed $T \in (0,+\infty)$, there exists an event $\Sigma \subset \Om$ with 
probability one, i.e. $\mathbb{P}(\Sigma) = 1$, such that for any $\om \in \Si$, the solution $h^\om$ to the problem \eqref{FE} lies in the following space:
    \begin{equation}\label{RH}
        h^\om \in Y_T \cap X_{T,1} \cap X_{T,2}.
    \end{equation}
    Moreover, when $\al \in \left( \frac{1}{2},1 \right]$, we further have
    \begin{equation}\label{RH-1}
        h^\om \in L^{\frac{4\al}{2\al \mu + 1}} \left( 0,T; W^{1-\al,p}(\mathbb{R}^d) \right).
    \end{equation}
\end{proposition}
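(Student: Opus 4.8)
The plan is to reduce everything to repeated applications of Lemma~\ref{SE}, which is tailored precisely to control weighted space-time norms of $h^\om$ of the type $L_{\rho;T}^a\Dot{W}_x^{\eta,p}$ by $T^\si\|u_0\|_{\Dot{H}^s}$. Each of the finitely many factors in the intersection defining $Y_T$, $X_{T,1}$, $X_{T,2}$ and the extra space in \eqref{RH-1} is, after the splitting described below, exactly such a weighted homogeneous norm, so the task is purely to identify for every factor the quadruple $(a,p,\rho,\eta)$ and to check the admissibility conditions of Lemma~\ref{SE}: either case (1), namely $2\le a,p\le r_s<\infty$, $\rho a>-1$ and $\eta-2\al\rho-\frac{2\al}{a}\le s$, or case (2) with $a=\infty$, $2\le p\le r_s$ and $\eta-2\al\rho\le s$. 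Whenever a factor carries an inhomogeneous Sobolev norm $W_x^{\eta,p}$ with $\eta\ge 0$, I would first invoke the equivalence \eqref{N} to split it into an $L_x^p$ part and a $\Dot{W}_x^{\eta,p}$ part, i.e. the two choices of index $0$ and $\eta$, and treat each piece separately.

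The crux of the verification is that the defining relation \eqref{mu} of $\mu$ is engineered so that the constraint $\eta-2\al\rho-\frac{2\al}{a}\le s$ holds with a nonnegative exponent $\si$ for every factor. For instance, for $X_{T,1}$ one has $\rho=\mu$, $\eta=0$ and $a=\frac{4\al}{2\al(1-\mu)-1}$, whence $\eta-2\al\rho-\frac{2\al}{a}=-\al(\mu+1)+\tfrac12$; if $\mu>0$ then $\al(\mu+1)=-s+\tfrac12$ by \eqref{mu}, giving equality $=s$ with $\si=0$, while if $\mu=0$ the definition of $\mu$ forces $s\ge\tfrac12-\al$, which is again exactly the required inequality. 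Likewise, for $X_{T,2}$ the weight $\rho=-\frac{s}{2\al}$ turns the $L^\infty L_x^2$ factor into case (2) with $\eta-2\al\rho=s$, and the $H_x^\al$ factor, split via \eqref{N} into its $L_x^2$ and $\Dot H_x^\al$ parts with $a=2$, into case (1) with $\eta-2\al\rho-\frac{2\al}{a}=\al+s-\al=s$. The remaining factors appearing in $Y_{T,1},\dots,Y_{T,4}$ are handled in the same spirit, using \eqref{DPA}, \eqref{Q} and the identity \eqref{AP}; moreover the extra factor in \eqref{RH-1} fits the pattern since, with $a=\frac{4\al}{2\al\mu+1}$ and $\eta=1-\al\ge0$, one computes $\eta-2\al\rho-\frac{2\al}{a}=\tfrac12-\al(\mu+1)\le s$.

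Once the bound \eqref{SE1} is in hand for each factor, I would pass from moments to almost sure membership. For a fixed factor, \eqref{SE1} says its norm belongs to $L_\om^{r_s}(\Om)$ with finite norm $\lesssim T^\si\|u_0\|_{\Dot H^s}$, hence is finite outside a $\mathbb{P}$-null set; equivalently, the tail bound \eqref{SE2} gives $\mathbb{P}(E_{\la,T})\to 0$ as $\la\to\infty$, so $h^\om$ lies in that factor for almost every $\om$. Since the whole space in \eqref{RH} together with \eqref{RH-1} is a \emph{finite} intersection of such factors, I intersect the corresponding full-measure events to obtain a single event $\Sigma$ with $\mathbb{P}(\Sigma)=1$ on which \eqref{RH} and, when $\al\in(\frac12,1]$, \eqref{RH-1} hold. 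I note that all the weights $\rho$ occurring in \eqref{Y}, \eqref{DPA}, \eqref{Q} are nonnegative, so that the side condition $\rho a>-1$ of case (1) is automatic.

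The main obstacle I expect is not conceptual but the bookkeeping across the four regimes in \eqref{Y}: one must confirm that every $p$ and every finite $a$ indeed satisfy $p,a\le r_s$ in view of the definition \eqref{rs} of $r_s$, and that every fractional index chosen is admissible — in particular that indices such as $1-\al(2\mu+1)$ in $Y_{T,1}$, $2\al-1$ in $Y_{T,2}$, $\tfrac12$ in the auxiliary factor, and the awkward integrability exponent $\frac{2d}{2\al(1-\mu)-2s-3}$ in $Y_{T,4}$ are nonnegative where an inhomogeneous norm is used and still lead to $\si\ge0$. All of these follow from the standing range $s\in(-\al+(1-\al)_+,0)$ together with \eqref{mu}, and the computations parallel those of \cite[Proposition~2.1]{LW}.
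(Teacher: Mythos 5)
Your proposal is correct and follows essentially the same route as the paper, whose proof (deferred to \cite[Proposition 2.1]{LW}) likewise reduces every factor of $Y_T$, $X_{T,1}$, $X_{T,2}$ and the space in \eqref{RH-1} to an admissible quadruple $(a,p,\rho,\eta)$ for Lemma \ref{SE} and then passes to an event of probability one; the only cosmetic difference is that you deduce almost sure finiteness directly from the $L_\om^{r_s}$ moment bound \eqref{SE1}, whereas the paper's argument uses the tail estimate \eqref{SE2} with dyadic levels $\la_k=2^k$ and takes $\Sigma=\bigcup_k E_{\la_k,T}^c$ --- an equivalent step. Your index verifications (e.g.\ $\eta-2\al\rho-\frac{2\al}{a}=\frac12-\al(\mu+1)\le s$ via \eqref{mu}, the case split $\mu>0$ versus $\mu=0$, and $p,a,b\le r_s$ from \eqref{rs}) are exactly the bookkeeping the paper omits.
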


\section{Almost sure global existence of weak solutions}

By the divergence-free condition of $u$, we rewrite the problem \eqref{GNS} with the initial data $u_0^\omega$ as
\begin{equation}\label{GNS1}
    \left\{
    \begin{aligned}
        \partial_t u + (-\Delta)^\al u + B(u,u) & = 0, \\
        u|_{t=0} & = u_0^\omega,
    \end{aligned}
    \right.
\end{equation}
where $B(f,g) = \mathrm{P} (f\cdot \nabla) g$, with $\mathrm{P}=Id+\nabla (-\Delta)^{-1} \mathrm{div}$ being the Leray projection.
Note that $\mathrm{div}\, h^\om = 0$, then from \eqref{GNS1} and \eqref{FE}, we know that $w=u-h^\om$ satisfies the following problem
\begin{equation}\label{DE}
    \left\{
    \begin{aligned}
       \partial_t w + (-\Delta)^\al w + B(w,w) + B(w,h^\om) + B(h^\om,w) + B(h^\om,h^\om) & = 0, \\
        w|_{t=0} & = 0.
    \end{aligned}
\right.
\end{equation}

We start with the definition of weak solutions to the initial value problem \eqref{DE}.

\begin{definition}\label{DEWS}
A function $w \in L^a\left( 0,T;L^\la(\mathbb{R}^d) \right)$, satisfying $t^\mu w \in L^\infty ((0,T];L_\si^2(\mathbb{R}^d)) \cap L^2((0,T];\Dot{H}_\si^\al(\mathbb{R}^d))$, is a 
 weak solution of the problem \eqref{DE} on $[0,T]$, if for any divergence-free test function $\psi \in C_c^\infty(\mathbb{R}^d)$ and a.e. $t \in [0,T]$, it holds that
    \begin{equation*}
        \langle \frac{dw}{dt},\psi \rangle + \langle \Lambda^\al w, \Lambda^\al \psi \rangle - \langle B(w,\psi), w \rangle - \langle B(w,\psi), h^\om \rangle - \langle B(h^\om,\psi), w \rangle - \langle B(h^\om,\psi), h^\om \rangle = 0,
    \end{equation*}
    and
    \begin{equation*}
        \lim_{t \rightarrow 0^+} || w(t)||_{\Dot{H}^s} = 0.
    \end{equation*}
    
    We say that the problem \eqref{DE} has a global weak solution if for any $T>0$, it admits a weak solution on $[0,T]$.
\end{definition}

The goal of this section is to establish the following result.

\begin{theorem}\label{DT}
    Let $\Si$ be the event constructed in Proposition \ref{PH}, then for any $\om \in \Si$, the problem \eqref{DE} admits a global weak solution in the sense of Definition \ref{DEWS}.
\end{theorem}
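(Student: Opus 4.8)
The plan is to construct the solution in two stages and then patch them together, exactly mirroring the structure announced in the motivational remark preceding the theorem. First I would solve the equation \eqref{DE} locally in time by a contraction argument. Introduce the Duhamel (mild) formulation
\begin{equation*}
    w(t) = -\int_0^t e^{-(t-s)(-\Delta)^\al}\Big[ B(w,w) + B(w,h^\om) + B(h^\om,w) + B(h^\om,h^\om) \Big](s)\, ds =: \mathcal{T}(w),
\end{equation*}
and work in the space $Y_\tau$ defined in \eqref{Y} for $\tau>0$ small. The key is a bilinear estimate of the form $\|\mathcal{T}(w_1)-\mathcal{T}(w_2)\|_{Y_\tau} \lesssim (\|w_1\|_{Y_\tau}+\|w_2\|_{Y_\tau}+\|h^\om\|_{Y_\tau})\|w_1-w_2\|_{Y_\tau}$, together with the bound for the source term $B(h^\om,h^\om)$. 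These follow from the maximal–regularity Lemmas \ref{MR}, \ref{ML} and Corollary \ref{MR1}, the product rule Lemma \ref{PR}, and the heat-flow smoothing \eqref{IH}; the exponent identity \eqref{AP} is precisely what makes the scaling of the bilinear term close in $L_\tau^a L_x^p$. Since $h^\om\in Y_\tau\cap X_{\tau,1}\cap X_{\tau,2}$ by Proposition \ref{PH} with $\|h^\om\|_{Y_\tau}\to 0$ as $\tau\to 0$ (by the factor $T^\sigma$ with $\sigma>0$ in Lemma \ref{SE}), one can choose $\tau=\tau(\om)$ small enough that $\mathcal{T}$ is a contraction on a small ball of $Y_\tau$, yielding a unique mild solution $w_1\in Y_\tau$.

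Second, I would upgrade this mild solution to a genuine weak solution on $[0,\tau]$ in the sense of Definition \ref{DEWS}. The membership $w_1\in Y_\tau$ together with $h^\om\in X_{\tau,1}$ must be used to show $t^\mu w_1\in L^\infty((0,\tau];L_\si^2)\cap L^2((0,\tau];\Dot H_\si^\al)$, i.e. $w_1\in X_{\tau,1}$ in the appropriate weighted sense; this is exactly the role of the auxiliary component $L_{\frac{3\mu+1}{4}-\frac{1}{8\al};\tau}^{\frac{8\al}{1-2\al(\mu-1)}} W_x^{\frac12,p}$ of $Y_\tau$ flagged in the motivation. One then verifies the weak formulation by testing the Duhamel identity against divergence-free $\psi\in C_c^\infty$ and checking that every pairing $\langle B(\cdot,\psi),\cdot\rangle$ is integrable in time via Hölder in the exponents \eqref{AB}, and that $\lim_{t\to 0^+}\|w_1(t)\|_{\Dot H^s}=0$ from the $T^\sigma$ decay.

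Third, on a later interval I run the standard energy method. Since the singularity at $t=0$ is gone (the weight $t^\mu$ regularizes), $w_1(\tfrac{\tau}{2})\in L_\si^2$ serves as genuine $L^2$ data, and $h^\om\in X_{T,2}=L_{-s/2\al;T}^\infty L_x^2\cap L_{-s/2\al;T}^2 H_x^\al$ supplies the coefficients needed to run a Galerkin/energy construction of a weak solution $w_2$ on $[\tfrac{\tau}{2},T]$ for any fixed $T>0$, with the standard energy inequality. Patching $w_1$ on $[0,\tfrac{\tau}{2}]$ with $w_2$ on $[\tfrac{\tau}{2},T]$ gives a global weak solution; the overlap on $[\tfrac{\tau}{2},\tau]$ is reconciled using the Serrin-type uniqueness Lemma \ref{UR}, whose hypotheses are met because $w_1\in Y_\tau$ contains a subcritical $L^m W^{\be,r}$ norm matching \eqref{be}. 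I expect the main obstacle to be the bilinear closure in $Y_\tau$: one must verify that each of the several components of the $Y_{T,j}$ norm is controlled by the others under $\mathcal{T}$, so that the fixed point genuinely lands in $Y_\tau$ rather than merely in $L_\tau^a L_x^p$ — this is the technical heart and the reason the elaborate multi-component spaces \eqref{Y} are introduced, and it is where the full range $\al\in(\tfrac12,\tfrac{d+2}{4}]$ (as opposed to $\al>\tfrac23$) is finally recovered.
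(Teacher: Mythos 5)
Your plan is correct and follows essentially the same route as the paper's proof: a contraction in $Y_{\tau_\om}$ for the Duhamel operator (Propositions \ref{CM}--\ref{MS}), an upgrade of the mild solution to a weak solution via the weighted bounds $t^\mu w_1\in L^\infty L^2\cap L^2\Dot H^\al$ and $\lim_{t\to 0^+}\|w_1(t)\|_{\Dot H^s}=0$ (Propositions \ref{P1}--\ref{P2}), a Galerkin/energy construction of $w_2$ on $[\tfrac{\tau_\om}{2},T]$ from the data $w_1(\tfrac{\tau_\om}{2})\in L^2_\si$, and gluing via the Serrin-type uniqueness Lemma \ref{UR}. The one imprecision is your claim that the hypotheses of Lemma \ref{UR} are met because $Y_\tau$ already contains a norm matching \eqref{be}: when $\mu>0$ the relevant components of $Y_{\tau,1}$ have either a weight in time or regularity $\be=1-\al(2\mu+1)<1-\al$, which falls outside \eqref{be}, so the paper must separately bootstrap the unweighted bound $w_1=K(w_1)\in L^{\frac{4\al}{2\al\mu+1}}\left(0,\tau_\om;W^{1-\al,p}\right)$ (and \eqref{RH-1} for $h^\om$) before applying Lemma \ref{UR} with $\be=1-\al$, $r=p$, $m=\frac{4\al}{2\al\mu+1}$ --- precisely the kind of extra regularity step your plan anticipates but does not pin down.
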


Obviously, $u=w+h^\om$ is a global weak solution to the problem \eqref{GNS} in the sense of Definition \ref{DW}, providing that $w$ is a global weak solution to the problem \eqref{DE} in the sense of Definition  \ref{DEWS}. Thus, the existence result stated in Theorem \ref{TW} follows from Theorem \ref{DT}. 

\subsection{Proof of Theorem \ref{DT} for $\al \in \left( \frac{1}{2},1 \right]$ with $d \geq 2$.}
\ 

Denote by 
\begin{equation}
    M(f,g)=\int_0^t e^{-(t-\tau)(-\Delta)^\al} B(f,g)(\tau,\cdot) \, d\tau
\end{equation}
for two divergence-free fields $f$ and $g$, for which we have the following estimate.

\begin{lemma}\label{MBL}
    For any fixed $T \in (0,1)$, assume that $m,r \in [2,\infty)$ and $\be \in [0,1]$, there exists a positive constant $C$ depending only on $m,r$ and $\be$, such that
    \begin{equation}\label{MB}
        ||M(f,g)||_{L_T^m W_x^{\be,r}} \leq C \left( ||(f,g)||_{L_T^a L_x^p}^2 + ||(f,g)||_{L_T^m W_x^{\be,r}}^2 \right),
    \end{equation}
    where $||(f,g)||_X := ||f||_X + ||g||_X$.
\end{lemma}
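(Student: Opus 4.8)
The plan is to reduce the whole estimate to a scalar convolution‑in‑time inequality closed by the Hardy–Littlewood–Sobolev bound (Lemma \ref{EY}), after distributing the $\be$ derivatives correctly. Since $f$ is divergence‑free, I first rewrite the nonlinearity in divergence form, $B(f,g)=\mathrm P\,\nabla\!\cdot(f\otimes g)$, so that a single derivative can be transferred onto the generalized heat kernel while the fractional Leibniz rule (Lemma \ref{PR}) absorbs $\Lambda^\be$ of the plain product $f\otimes g$. Using the equivalence \eqref{N}, I split $\|M(f,g)\|_{L_T^m W_x^{\be,r}}\sim\|M(f,g)\|_{L_T^m L_x^r}+\|M(f,g)\|_{L_T^m \Dot{W}_x^{\be,r}}$ and treat the two pieces in parallel.

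For the homogeneous piece I commute $\Lambda^\be$ through the Fourier multipliers and write
\[
\Lambda^\be M(f,g)(t)=\int_0^t e^{-(t-\tau)(-\Delta)^\al}\,\mathrm P\,\nabla\!\cdot\,\Lambda^\be(f\otimes g)(\tau)\,d\tau .
\]
Because $\mathrm P$ and the Riesz factors in $\nabla\Lambda^{-1}$ are bounded on $L^r$, Lemma \ref{PQ} with $\nu=1$ (one derivative absorbed by the kernel) gives, for $\tfrac1k=\tfrac1p+\tfrac1r$,
\[
\big\|e^{-(t-\tau)(-\Delta)^\al}\mathrm P\nabla\!\cdot\Lambda^\be(f\otimes g)(\tau)\big\|_{L_x^r}\lesssim (t-\tau)^{-\frac1{2\al}-\frac{d}{2\al p}}\big\|\Lambda^\be(f\otimes g)(\tau)\big\|_{L_x^k},
\]
and Lemma \ref{PR} with the same $k$ yields $\|\Lambda^\be(f\otimes g)\|_{L_x^k}\lesssim \|f\|_{L_x^p}\|\Lambda^\be g\|_{L_x^r}+\|g\|_{L_x^p}\|\Lambda^\be f\|_{L_x^r}$. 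Here I use that $p>2$ in the present range, so $\tfrac1k=\tfrac1p+\tfrac1r<1$ and $k\in(1,r)$, which legitimizes both lemmas.

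The decisive point is the scaling identity: by the admissibility relation \eqref{AP}, $\tfrac{d}{2\al p}=1-\tfrac1{2\al}-\tfrac1a$, so the kernel exponent equals $\eta:=\tfrac1{2\al}+\tfrac{d}{2\al p}=1-\tfrac1a$, \emph{independently of} $\be$. Viewing the time integral as $I_\eta$ acting on $G(\tau):=\|f\|_{L_x^p}\|\Lambda^\be g\|_{L_x^r}+\|g\|_{L_x^p}\|\Lambda^\be f\|_{L_x^r}$ (extended by zero outside $(0,T)$), Hölder in time gives $G\in L^\sigma$ with $\tfrac1\sigma=\tfrac1a+\tfrac1m$, and the relation $1-(\tfrac1\sigma-\tfrac1m)=1-\tfrac1a=\eta$ is exactly the hypothesis of Lemma \ref{EY}. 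Hence $\|M(f,g)\|_{L_T^m \Dot{W}_x^{\be,r}}\lesssim \|(f,g)\|_{L_T^aL_x^p}\,\|(f,g)\|_{L_T^mW_x^{\be,r}}$. The inhomogeneous piece $\|M(f,g)\|_{L_T^mL_x^r}$ is handled the same way with no Leibniz step: I again take $\tfrac1k=\tfrac1p+\tfrac1r$, bound $\|f\otimes g\|_{L_x^k}\lesssim \|f\|_{L_x^p}\|g\|_{L_x^r}$ using $W^{\be,r}\hookrightarrow L^r$, and the identical scaling closes it. Combining the two pieces and applying $xy\le\tfrac12(x^2+y^2)$ produces the quadratic right‑hand side of \eqref{MB}.

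I expect the main obstacle to be precisely the placement of derivatives: transferring all $\be+1$ derivatives to the kernel would force the fractional‑integration exponent to be $1-\tfrac1a+\tfrac{\be}{2\al}$, which fails to match $1-\tfrac1a$ for $\be>0$; only by peeling the $\be$ derivatives onto the factors via Lemma \ref{PR} does the scaling become $\be$‑free and the estimate close uniformly in $\be\in[0,1]$. The remaining care is bookkeeping the admissible ranges—$k\in(1,\infty)$ for Lemma \ref{PR}, $k\le r$ and $\eta\in(0,1)$ for Lemmas \ref{PQ} and \ref{EY}, and $\sigma\in(1,m)$—together with the endpoint cases (e.g.\ $a=m=2$), where one replaces the Hardy–Littlewood–Sobolev step by Young's convolution inequality, the finiteness $T<1$ ensuring integrability.
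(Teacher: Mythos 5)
Your proposal is correct and takes essentially the same route as the paper: the paper packages your ``divergence form plus fractional Leibniz'' step as the duality estimate \eqref{M11}, placing $B(f,g)$ in $W_x^{\be-1,\frac{pr}{p+r}}$ with $\|f\|_{L_x^p}\|g\|_{W_x^{\be,r}}+\|f\|_{W_x^{\be,r}}\|g\|_{L_x^p}$ on the right, and then lets the semigroup absorb exactly one derivative via \eqref{IH}, producing the same $\be$-independent kernel exponent $\frac{1}{2\al}+\frac{d}{2\al p}=1-\frac{1}{a}$ that you identified through \eqref{AP}. From there the paper closes identically to you, applying Lemma \ref{EY} with $\frac{1}{\sigma}=\frac{1}{a}+\frac{1}{m}$ followed by H\"older in time (and since $a>2$ forces $\sigma>1$, your fallback to Young's inequality at a putative endpoint is never needed).
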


\begin{proof}
For any divergence-free fields $f,h \in C_c^\infty (\mathbb{R}^d)$ and $g \in C_c^\infty (\mathbb{R}^d)$, $\beta \in [0,1]$ and $l \in (1,\infty)$, by using Lemma \ref{PR} and Hölder's inequality, one has
    \begin{equation}\label{M10}
        \begin{aligned}
            \left| \int_{\mathbb{R}^d} B(f,g) \cdot h\, dx \right| & = \left| \int_{\mathbb{T}^d} (f\cdot \nabla) g \cdot h \, dx \right| \\
            & \lesssim ||\Lambda^{1-\be}(f \otimes g)||_{L_x^{l}} ||\Lambda^\be h||_{L_x^{l'}} \\
            & \lesssim \left( ||\Lambda^{1-\be} f||_{L_x^{l_1}} ||g||_{L_x^{l_2}} + ||f||_{L_x^{l_3}} ||\Lambda^{1-\be}  g||_{L_x^{l_4}} \right) ||\Lambda^\be h||_{L_x^{l'}} \\
            & \lesssim \left( ||f||_{W_x^{1-\be,l_1}} ||g||_{L_x^{l_2}} + ||f||_{L_x^{l_3}} ||g||_{W_x^{1-\be,l_4}} \right) ||h||_{W_x^{\be,l'}},
        \end{aligned}
    \end{equation}
    where $\frac{1}{l}=1-\frac{1}{l'}=\frac{1}{l_1}+ \frac{1}{l_2} = \frac{1}{l_3} + \frac{1}{l_4}$. From \eqref{M10} and Lemma A.1 in \cite{LW}, we obtain
\begin{equation}\label{M11}
    ||B(f,g)||_{W_x^{-\be,l}} \leq ||B(f,g)||_{\Dot{W}_x^{-\be,l}} \lesssim ||f||_{W_x^{1-\be,l_1}} ||g||_{L_x^{l_2}} + ||f||_{L_x^{l_3}} ||g||_{W_x^{1-\be,l_4}}.
\end{equation}
Moreover, for any $f \in L_x^{l_3} \cap W_x^{1-\be,l_1}$ and $g \in L_x^{l_2} \cap W_x^{1-\be,l_4}$, the following estimate holds
\begin{equation}\label{M12}
    ||\Lambda^{-\be} B(f,g)||_{L_x^l} \lesssim ||\Lambda^{1-\be} f||_{L_x^{l_1}} ||g||_{L_x^{l_2}} + ||f||_{L_x^{l_3}} ||\Lambda^{1-\be} g||_{L_x^{l_4}}.
\end{equation}

    Using the estimate \eqref{IH} and $T<1$, it follows
\begin{align*}
    \|M(f,g)\|_{L_T^m W_x^{\be,r}} 
 & = \left|\left|\int_0^t e^{-(t-\tau)(-\Delta)^\al} (1-\Delta)^{\frac{1}{2}}(1-\Delta)^{\frac{\be-1}{2}} B(f,g) \, d\tau \right|\right|_{L_T^m L_x^r} \\
 &  \lesssim \left|\left|\int_0^t \left( 1+(t-\tau)^{\frac{1}{2\al}} \right) (t-\tau)^{-\frac{1}{2\al} - \frac{d}{2\al} \left( \frac{1}{p} \right) } ||B(f,g)||_{W_x^{\be-1,\frac{pr}{p+r}}} \, d\tau \right|\right|_{L_T^m} \\
 & \lesssim \left( 1+T^{\frac{1}{2\al}} \right) \left|\left|\int_0^t (t-\tau)^{-\frac{1}{2\al} - \frac{d}{2\al} \left( \frac{1}{p} \right) } ||B(f,g)||_{W_x^{\be-1,\frac{pr}{p+r}}} \, d\tau \right|\right|_{L_T^m} \\
 & \lesssim \left|\left|\int_0^t (t-\tau)^{-\frac{1}{2\al} - \frac{d}{2\al} \left( \frac{1}{p} \right) } ||B(f,g)||_{W_x^{\be-1,\frac{pr}{p+r}}} \, d\tau \right|\right|_{L_T^m}.
\end{align*}
Noting that the relation \eqref{AP} between $a$ and $p$ implies
\begin{equation*}
    \frac{1}{2\al} + \frac{d}{2\al} \cdot \frac{1}{p}= 1 - \left( \frac{a+m}{am} - \frac{1}{m} \right),
\end{equation*}
by using Lemma \ref{EY}, the estimate \eqref{M11} and Hölder's inequality, one gets
\begin{align*}
    \|M(f,g)\|_{L_T^m W_x^{\be,r}} & \lesssim \Big|\Big| ||B(f,g)||_{W_x^{\be-1,\frac{pr}{p+r}}} \Big|\Big|_{L_T^\frac{am}{a+m}} \\
 & \lesssim \Big|\Big| ||f||_{L_x^p} ||g||_{W_x^{\be,r}} + ||f||_{W_x^{\be,r}} ||g||_{L_x^p}\Big|\Big|_{L_T^\frac{am}{a+m}} \\
 & \lesssim ||f||_{L_T^a L_x^p} ||g||_{L_T^m {W_x^{\be,r}}} + ||f||_{L_T^m {W_x^{\be,r}}} ||g||_{L_T^a L_x^p},
\end{align*}
from which we conclude the estimate \eqref{MB}.
\end{proof}

When $t$ near zero, we rewrite the problem \eqref{DE} as the following integral form
\begin{equation}\label{IF}
   \begin{array}{ll}
   w(t,x) = (Kw)(t,x)& :=  - \int_0^t e^{-(t-\tau)(-\Delta)^\al} \Big( B(w,w) + B(w,h^\om) + B(h^\om, w) + B(h^\om,h^\om) \Big) d\tau\\[2mm]
   &= -M(w,w)-M(w,h^\om)-M(h^\om,w)-M(h^\om,h^\om).
   \end{array}
\end{equation}

With the help of Lemma \ref{MBL}, we have

\begin{proposition}\label{CM}
    For any fixed $T \in (0,1)$, the operator $K$ defined in \eqref{IF} satisfies
    \begin{equation}\label{CM1}
        ||K(w)||_{Y_T} \leq C_K \left( ||w||_{Y_T}^2 + ||h^\omega||_{Y_T}^2 \right),
    \end{equation}
    and
    \begin{equation}\label{CM2}
        ||K(w_1)-K(w_2)||_{Y_T} \leq C_K ||w_1-w_2||_{Y_T}(||w_1||_{Y_T}+||w_2||_{Y_T}+||h^\omega||_{Y_T}),
    \end{equation}
    where $C_K$ is a positive constant and the space $Y_T$ is defined in \eqref{Y}.
\end{proposition}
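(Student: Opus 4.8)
The plan is to reduce both inequalities to a single \emph{master bilinear estimate}
\begin{equation*}
  \|M(f,g)\|_{Y_T}\ \lesssim\ \|f\|_{Y_T}\,\|g\|_{Y_T},
\end{equation*}
valid for all divergence-free $f,g$ and every $T\in(0,1)$, and then to read off \eqref{CM1} and \eqref{CM2} from the bilinearity of $M$ recorded in \eqref{IF}. Since in the present range $\al\in\left(\frac12,1\right]$ the space $Y_T$ equals $Y_{T,1}$ or $Y_{T,2}$, it is a \emph{finite} intersection of (possibly time-weighted) spaces of the form $L^m_{\rho;T}W^{\be,r}_x$; hence it suffices to bound $\|M(f,g)\|$ in each constituent norm and sum. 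For the unweighted constituents this is exactly Lemma \ref{MBL}. In fact its proof yields the sharper, genuinely bilinear bound
\begin{equation*}
  \|M(f,g)\|_{L^m_T W^{\be,r}_x}\ \lesssim\ \|f\|_{L^a_T L^p_x}\|g\|_{L^m_T W^{\be,r}_x}+\|f\|_{L^m_T W^{\be,r}_x}\|g\|_{L^a_T L^p_x},
\end{equation*}
of which \eqref{MB} is the Young's-inequality consequence. Because $L^a_T L^p_x$ is itself one of the constituents defining $Y_T$, the auxiliary factor $\|\cdot\|_{L^a_T L^p_x}$ is controlled by $\|\cdot\|_{Y_T}$, so each constituent norm of $M(f,g)$ is $\lesssim\|f\|_{Y_T}\|g\|_{Y_T}$, and summing over the finitely many constituents gives the master estimate.

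Granting the master estimate, the bound \eqref{CM1} is immediate. Applying it to each of the four terms in $K(w)=-M(w,w)-M(w,h^\om)-M(h^\om,w)-M(h^\om,h^\om)$ gives
\begin{equation*}
  \|K(w)\|_{Y_T}\ \lesssim\ \|w\|_{Y_T}^2+2\|w\|_{Y_T}\|h^\om\|_{Y_T}+\|h^\om\|_{Y_T}^2\ \lesssim\ \|w\|_{Y_T}^2+\|h^\om\|_{Y_T}^2,
\end{equation*}
the cross term being absorbed via $2ab\le a^2+b^2$. For \eqref{CM2} I would exploit bilinearity: the term $M(h^\om,h^\om)$ does not depend on $w$ and so cancels in $K(w_1)-K(w_2)$, while
\begin{equation*}
  M(w_1,w_1)-M(w_2,w_2)=M(w_1-w_2,w_1)+M(w_2,w_1-w_2),
\end{equation*}
and $M(w_1,h^\om)-M(w_2,h^\om)=M(w_1-w_2,h^\om)$, together with $M(h^\om,w_1)-M(h^\om,w_2)=M(h^\om,w_1-w_2)$. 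Each resulting term carries the factor $w_1-w_2$, so the master estimate produces precisely \eqref{CM2}.

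The genuine obstacle is the treatment of the \emph{time-weighted} constituents of $Y_T$ --- namely $L_{\frac{3\mu+1}{4}-\frac{1}{8\al};T}^{\frac{8\al}{1-2\al(\mu-1)}}W_x^{\frac12,p}$ in $Y_{T,1}$ and $L_{1-\frac{1}{2\al};T}^{a}W_x^{2\al-1,p}$ in $Y_{T,2}$ --- to which Lemma \ref{MBL}, stated without weights, does not apply directly. Here I would prove a weighted analogue of Lemma \ref{MBL} by re-running its chain of estimates (the product bound of Lemma \ref{PR}, the smoothing estimate \eqref{IH}, and the fractional-integration Lemma \ref{EY}) with the weight $t^\rho$ inserted, calling on Corollary \ref{MR1} to handle the weighted maximal-regularity step in place of the unweighted one. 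The accompanying bookkeeping --- verifying for every constituent that $m,r\in[2,\infty)$ and $\be\in[0,1]$, and that the scaling identity \eqref{AP} forces the convolution exponent to meet the hypotheses of Lemma \ref{EY} --- is routine but must be checked case by case; it is precisely the lower bound $s>1-2\al$ built into the definition \eqref{Y} of $Y_{T,1}$ and $Y_{T,2}$ that renders the spatial exponents $q$, $\la$ and the smoothness index $\be$ admissible.
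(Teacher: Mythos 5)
Your overall architecture coincides with the paper's: the paper likewise proves \eqref{CM1} constituent by constituent, using Lemma \ref{MBL} (whose proof is indeed genuinely bilinear, as you observe) for the unweighted norms $L_T^a L_x^p$, $L_T^a L_x^\la$, $L_T^a L_x^q$ and $L_T^b W_x^{1-\al(2\mu+1),p}$, and it dispatches \eqref{CM2} with exactly your telescoping remark. So the only substantive content your proposal leaves open is the estimate for the weighted constituents --- which you correctly identify as the genuine obstacle --- and there your sketch points at the wrong mechanism. The paper isolates these estimates as Lemmas \ref{MEL} and \ref{MEL2} (proved in Appendix \ref{A}), and their proof is not ``Lemma \ref{MBL} with the weight $t^\rho$ inserted'': the weight cannot simply ride along through Lemma \ref{EY}, which is an unweighted convolution inequality in time. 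The essential device is the decomposition \eqref{ME11} of $M(f,g)$ into $I_1=\int_0^{t/2}$ and $I_2=\int_{t/2}^{t}$, with the weight handled differently on each piece. On $I_1$ one has $t-\tau\sim t$, so $t^\ka\lesssim (t-\tau)^\ka$ and the weight is absorbed into the kernel exponent before applying \eqref{IH}, \eqref{M11} and Lemma \ref{EY}. On $I_2$ one has $\tau\sim t$, so the weight must be transferred to $\tau$ and \emph{paid for by the weighted norms of $f$ and $g$ themselves}: writing $t^\ka=t^{\ka-\ka'}t^{\ka'}\lesssim (t-\tau)^{\ka-\ka'}\tau^{\ka'}$ requires $\ka\le\ka'$ with $\ka'=\frac{3\mu+1}{4}-\frac{1}{8\al}$ (resp.\ $\ka'=1-\frac{1}{2\al}$), and the resulting bound involves $\|(f,g)\|_{L_{\ka';T}^{\frac{8\al}{1-2\al(\mu-1)}}W_x^{\frac{1}{2},p}}$ (resp.\ $\|(f,g)\|_{L_{\ka';T}^{a}W_x^{2\al-1,p}}$) --- this is precisely why those weighted spaces are built into $Y_{T,1}$ and $Y_{T,2}$ in the first place, and it is encoded in the exponent identities \eqref{H3.2}, \eqref{H3.3}.

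Your appeal to Corollary \ref{MR1} is also misplaced for this proposition: the paper does not use it here (it is invoked later, in \eqref{RM11} within the proof of Proposition \ref{MS}), and in any case it only treats the $\int_0^{t/2}$ portion --- with $\zeta\ge 2\al$ derivatives and the prefactor $t^{\frac{\zeta}{2\al}-1}$ --- whereas the piece where the weight is genuinely problematic is $I_2$, on $[t/2,t]$. With Lemmas \ref{MEL} and \ref{MEL2} supplied, the rest of your argument (the master bilinear estimate, the four-term expansion of $K$ from \eqref{IF}, Young's inequality for the cross terms, and the bilinear telescoping for \eqref{CM2}) matches the paper and is correct.
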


\begin{proof}
We shall derive the estimate \eqref{CM1}, and \eqref{CM2} can be obtained similarly. As we consider the case $\alpha\in ({\frac 12}, 1]$ in this subsection, so we shall only need to investigate the case $Y_T=Y_{T,1}$ or $Y_T=Y_{T,2}$ given in \eqref{Y}.

\hspace{.02in}
\noindent \textbf{\underline{$\bullet$ Case 1: $\al \in (\frac{2}{3},1], s \in [-\frac{\al}{2},0) ~\ \mathrm{or} ~\ \al \in (\frac{1}{2},\frac{2}{3}], s \in (1-2\al,0)$, i.e. $Y_T=Y_{T,1}$.}}

\vspace{0.5em}

In this case, we claim that the following lemma holds, and its proof shall be given in Appendix \ref{A}.
\begin{lemma}\label{MEL}
    For any fixed $T \in (0,1)$, assume that $\be \in [0,1]$, $\ka \in \left[0, \frac{3\mu+1}{4}-\frac{1}{8\al} \right]$ with $\mu$ being given in \eqref{mu}, $m \in (2,\infty)$ and $r \in [2,\infty)$ satisfy
    \begin{equation}\label{H3.2}
        \frac{\be}{2\al} - \ka + \frac{1}{a} = \frac{1}{m},
    \end{equation}
    with $a$ being given in \eqref{DPA}.
     Then there exists a positive constant $C$, such that
     \begin{equation}\label{ME}
         ||M(f,g)||_{L_{\ka;T}^m W_x^{\be,r}} \leq C \left( ||(f,g)||_{L_T^a L_x^r}^2 + ||(f,g)||_{L_T^b W_x^{1-\al(2\mu+1),p}}^2 + ||(f,g)||_{L_{\frac{3\mu+1}{4}-\frac{1}{8\al};T}^{\frac{8\al}{1-2\al(\mu-1)}} W_x^{\frac{1}{2},p}}^2 \right).
      \end{equation}
\end{lemma}

Now, let us show the estimate \eqref{CM1} in this situation. 

On one hand, by choosing $\be=0$, $m=a$ and $r=p$ ($\la$ and $q$ respectively) in Lemma \ref{MBL}, we obtain
\begin{equation}\label{CM11}
    \begin{aligned}
        ||K(w)||_{L_T^a L_x^p} +
    ||K(w)||_{L_T^a L_x^\la} +
    ||K(w)||_{L_T^a L_x^q} 
    & \lesssim ||(w,h^\om)||_{L_T^a L_x^p}^2 + ||(w,h^\om)||_{L_T^a L_x^\la}^2 + ||(w,h^\om)||_{L_T^a L_x^q}^2 \\
    & \lesssim ||w||_{Y_{T,1}}^2 + ||h^\om||_{Y_{T,1}}^2,
    \end{aligned}
\end{equation}
by using the form \eqref{IF} of the operator $K$.

Similarly, by taking $m=b$, $r=p$ and $\be=1-\al(2\mu+1)$ in Lemma \ref{MBL}, it follows
\begin{equation}\label{CM12}
    ||K(w)||_{L_T^b W_x^{1-\al(2\mu+1),p}} \lesssim ||(w,h^\om)||_{L_T^a L_x^p}^2 + ||(w,h^\om)||_{L_T^b W_x^{1-\al(2\mu+1),p}}^2 \lesssim ||w||_{Y_{T,1}}^2 + ||h^\om||_{Y_{T,1}}^2.
\end{equation}

On the other hand, by using Lemma \ref{MEL} with $\be=\frac{1}{2}$, $\ka=\frac{3\mu+1}{4}-\frac{1}{8\al}$, $m=\frac{8\al}{1-2\al(\mu-1)}$ and $r=p$, one gets
\begin{equation}\label{CM13}
    \begin{aligned}
        ||K(w)||_{L_{\frac{3\mu+1}{4}-\frac{1}{8\al};T}^{\frac{8\al}{1-2\al(\mu-1)}} W_x^{\frac{1}{2},p}} & \lesssim ||(w,h^\om)||_{L_T^a L_x^p}^2 + ||(w,h^\om)||_{L_T^b W_x^{1-\al(2\mu+1),p}}^2 + ||(w,h^\om)||_{L_{\frac{3\mu+1}{4}-\frac{1}{8\al};T}^{\frac{8\al}{1-2\al(\mu-1)}} W_x^{\frac{1}{2},p}}^2 \\
        & \lesssim \, ||w||_{Y_{T,1}}^2 + ||h^\om||_{Y_{T,1}}^2.
    \end{aligned}
\end{equation}

Combining \eqref{CM11}-\eqref{CM13} we conclude the estimate \eqref{CM1}. 

\hspace{.02in}
\noindent \textbf{\underline{$\bullet$ Case 2: $\al \in (\frac{2}{3},1], s \in (1-2\al, -\frac{\al}{2})$, i.e. $Y_T=Y_{T,2}$.}}

\vspace{0.5em}

Alternatively, we have the following lemma in this situation, its proof shall be given in Appendix \ref{A}.
\begin{lemma}\label{MEL2}
    For any fixed $T \in (0,1)$, assume that $\be \in [0,1]$, $\ka \in \left[0, 1-\frac{1}{2\al} \right]$, $m \in [2,\infty)$ and $r \in [2,\infty)$ satisfy
    \begin{equation}\label{H3.3}
        \frac{\be}{2\al} - \ka + \frac{1}{a} = \frac{1}{m},
    \end{equation}
     with $a$ being given in \eqref{DPA}. Then there exists a positive constant $C$, such that
     \begin{equation}\label{ME2}
         ||M(f,g)||_{L_{\ka;T}^m W_x^{\be,r}} \leq C \left( ||(f,g)||_{L_T^a L_x^r}^2 + ||(f,g)||_{L_T^a L_x^p}^2 + ||(f,g)||_{L_{1-\frac{1}{2\al};T}^a W_x^{2\al-1,p}}^2 \right).
      \end{equation}
\end{lemma}

Note that $Y_T = Y_{T,2}$ at this time. By using Lemma \ref{MEL2} with $\be=2\al-1$, $\ka=1-\frac{1}{2\al}$, $m=a$ and $r=p$, one obtains
\begin{equation}\label{CM14}
    \begin{aligned}
        ||K(w)||_{L_{1-\frac{1}{2\al};T}^a W_x^{2\al-1,p}} 
        & \lesssim ||(w,h^\om)||_{L_T^a L_x^p}^2 + ||(w,h^\om)||_{L_{1-\frac{1}{2\al};T}^a W_x^{2\al-1,p}}^2 \\
        & \lesssim ||w||_{Y_{T,2}}^2 + ||h^\om||_{Y_{T,2}}^2.
    \end{aligned}
\end{equation}

Adding \eqref{CM11} and \eqref{CM14} together we conclude the estimate \eqref{CM1} in this case. The estimate \eqref{CM2} can be obtained in the same way as above. 

\end{proof}

\begin{proposition}\label{MS}
    Let $\Si$ be the event constructed in Proposition \ref{PH}, then for any $\om \in \Si$, there exist $\tau_\om \in (0,1)$ and a mild solution\footnote{We say that $w_1$ is a mild solution to the problem \eqref{DE} if $w_1$ is a solution of the integral equation \eqref{IF}.} $w_1$ to the problem \eqref{DE} on the time interval $[0,\tau_\om]$. Moreover,
    \begin{equation}\label{RM}
        w_1 \in Y_{\tau_\om} \cap X_{{\tau_\om},1} \cap L_{\tau_\om}^{\frac{4\al}{2\al \mu + 1}} W_x^{1-\al,p} \cap L_{\mu;\tau_\om}^2 \Dot{H}_x^\al.
    \end{equation}
\end{proposition}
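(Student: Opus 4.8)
The plan is to obtain $w_1$ as the fixed point of the operator $K$ defined in \eqref{IF} on a short time interval, and then to bootstrap its regularity up to \eqref{RM}.

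\textbf{Step 1 (smallness of the linear part).} For the fixed $\om \in \Si$, Proposition \ref{PH} guarantees $h^\om \in Y_T$ for every $T \in (0,1)$. Since each norm defining $Y_{T,1}$ or $Y_{T,2}$ is of the form $\|t^\ka h^\om\|_{L^m(0,T;W_x^{\be,r})}$ with a \emph{finite} time exponent $m$, the dominated convergence theorem yields $\|h^\om\|_{Y_\tau} \to 0$ as $\tau \to 0^+$. Hence $\eta_\tau := \|h^\om\|_{Y_\tau}$ can be made arbitrarily small by shrinking $\tau$.

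\textbf{Step 2 (contraction).} Next I would invoke Proposition \ref{CM} in the closed ball $B_R = \{ w \in Y_\tau : \|w\|_{Y_\tau} \le R \}$. Estimate \eqref{CM1} gives the self-mapping bound $\|K(w)\|_{Y_\tau} \le C_K(R^2 + \eta_\tau^2)$, and \eqref{CM2} gives $\|K(w_1)-K(w_2)\|_{Y_\tau} \le C_K(2R+\eta_\tau)\|w_1-w_2\|_{Y_\tau}$ on $B_R$. Choosing $R = 2C_K\eta_\tau^2$ and then $\tau = \tau_\om$ small enough (via Step 1) that $4C_K^2\eta_\tau^2 \le 1$ and $4C_K^2\eta_\tau^2 + C_K\eta_\tau < 1$, one checks that $K$ maps $B_R$ into itself and is a strict contraction there. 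The Banach fixed point theorem then provides a unique $w_1 \in B_R$ solving $w_1 = K(w_1)$, i.e. a mild solution of \eqref{DE} on $[0,\tau_\om]$ with $w_1 \in Y_{\tau_\om}$.

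\textbf{Step 3 (bootstrap to \eqref{RM}).} It remains to upgrade $w_1 \in Y_{\tau_\om}$ to membership in $X_{{\tau_\om},1} \cap L_{\tau_\om}^{\frac{4\al}{2\al \mu + 1}} W_x^{1-\al,p} \cap L_{\mu;\tau_\om}^2 \Dot{H}_x^\al$. Here I would feed $w_1 = K(w_1)$ back into the integral operator and estimate each term $M(\cdot,\cdot)$ in the target spaces using only the $Y_{\tau_\om}$-information already secured; this is exactly why the auxiliary components (such as $L_{\frac{3\mu+1}{4}-\frac{1}{8\al};\tau_\om}^{\frac{8\al}{1-2\al(\mu-1)}} W_x^{\frac12,p}$) were built into $Y_{T,1}$, as explained in the remark preceding Proposition \ref{PH}. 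The $X_{\tau_\om,1}$ and $L^{\frac{4\al}{2\al\mu+1}}W_x^{1-\al,p}$ bounds follow from the smoothing estimate \eqref{IH} (Lemma \ref{PQ}), the product rule Lemma \ref{PR}, the fractional integration inequality Lemma \ref{EY}, and Hölder's inequality in time, once the scaling identities \eqref{AP} and \eqref{AB} are verified. For the energy-type regularity $L_{\mu;\tau_\om}^2 \Dot{H}_x^\al$ I would apply the maximal regularity Lemma \ref{MR}, Corollary \ref{MR1} and Lemma \ref{ML}, writing the derivative present in $B(f,g)$ so that one factor $\Lambda^\al$ is absorbed by these operators while the remaining factors are controlled by the $Y_{\tau_\om}$- and $X_{\tau_\om,1}$-norms.

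The contraction of Step 2 is routine once Proposition \ref{CM} and the smallness from Step 1 are in hand. The genuine difficulty lies in Step 3: matching the fractional-derivative count and the powers of $t$ in the weights so that the maximal-regularity operators of Lemmas \ref{MR}--\ref{ML} apply with the \emph{precise} exponents defining $X_{\tau_\om,1}$ and $L_{\mu;\tau_\om}^2 \Dot{H}_x^\al$. Keeping track of these exact indices, rather than the contraction mechanism itself, is where the technical heart of the argument sits.
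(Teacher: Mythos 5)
Your proposal is correct and follows essentially the same route as the paper: the paper likewise picks $\tau_\om$ so that $\la_{\tau_\om}=\|h^\om\|_{Y_{\tau_\om}}\leq \frac{1}{4C_K}$, runs the Banach fixed point argument via Proposition \ref{CM} on the ball $\{w\in (Y_{\tau_\om})_\si : \|w\|_{Y_{\tau_\om}}\leq 2C_K\la_{\tau_\om}^2\}$, and then bootstraps $w_1=K(w_1)$ through Lemmas \ref{MEL}/\ref{MEL2} (with the parameter choices $\be=0,\ka=\mu$ and $\be=1-\al,\ka=0$) for the $X_{\tau_\om,1}$ and $L_{\tau_\om}^{\frac{4\al}{2\al\mu+1}}W_x^{1-\al,p}$ memberships, and through the splitting $M(f,g)=\int_0^{t/2}+\int_{t/2}^t$ — Corollary \ref{MR1} with $\zeta=2\al(\mu+1)$ for the near-zero piece in Case 1, and Lemma \ref{PQ} plus Lemma \ref{EY} after shifting the weight $t^\mu$ onto $\tau$ for the other pieces — to get $L_{\mu;\tau_\om}^2\Dot{H}_x^\al$. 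The index bookkeeping you flag as the technical heart of Step 3, in particular this $[0,t/2]/[t/2,t]$ decomposition that reconciles the weights with the smoothing and fractional-integration lemmas, is exactly what the paper executes, so your outline matches the paper's proof in both structure and tools.
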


\begin{proof}
    From \eqref{RH}, we know there is $\tau_\om \in (0,1)$ small, such that
\begin{equation*}
    \la_{\tau_\om} := ||h^\om||_{Y_{\tau_\om}} \leq \frac{1}{4C_K},
\end{equation*}
where $C_K$ is the constant given on the right hand side of \eqref{CM1}-\eqref{CM2}. It is easy to see from Proposition \ref{CM} that the operator $K$ defined by \eqref{IF} maps the closed subset 
$$B:=\left\{ w \in (Y_{\tau_\om})_\si : ||w||_{Y_{\tau_\om}} \leq 2C_K \lambda_{\tau_\om}^2 \right\}$$
into itself, with $(Y_{\tau_\om})_\si$ denoting the divergence-free subspace of $Y_{\tau_\om}$, and for any two $w$ and $\widetilde{w}$ in 
the set $B$, one has
$$\|K(w)-K(\widetilde{w})\|_{Y_{\tau_\om}}\le \frac{1}{2}\|w-\widetilde{w}\|_{Y_{\tau_\om}}.$$
So, there is a fixed point $w_1$ of $K$ on the closed subset $B$, which is a mild solution of the problem \eqref{DE} on $(Y_{\tau_\om})_\si$.

Next, let us verify that the mild solution $w_1$ satisfies the property \eqref{RM}, which shall also be studied for two cases respectively.

\hspace{.02in}
\noindent \textbf{\underline{$\bullet$ Case 1: $\al \in (\frac{2}{3},1], s \in [-\frac{\al}{2},0) ~\ \mathrm{or} ~\ \al \in (\frac{1}{2},\frac{2}{3}], s \in (1-2\al,0)$.}}

\vspace{0.5em}

In this case we know $Y_{\tau_\om} = Y_{\tau_\om,1}$. 

By taking $\be=0$, $\ka=\mu$, $m=\frac{4\al}{2\al(1-\mu)-1}$ and $r=p$ ($q$ respectively) in Lemma \ref{MEL}, we have
$$||K(w)||_{L_{\mu;\tau_\om}^{\frac{4\al}{2\al(1-\mu)-1}} L_x^p} + ||K(w)||_{L_{\mu;\tau_\om}^{\frac{4\al}{2\al(1-\mu)-1}} L_x^q} \lesssim ||w||_{Y_{\tau_\om,1}}^2 + ||h^\om||_{Y_{\tau_\om,1}}^2,$$
by noting the form \eqref{IF} of the operator $K$. Thus, one gets
\begin{equation}\label{RM1}
    w_1=K(w_1)\in X_{\tau_\om,1},
\end{equation}
by using \eqref{RH} and $w_1 \in Y_{\tau_\om,1}$.

Similarly, by choosing $\be=1-\al$, $\ka=0$, $m=\frac{4\al}{2\al\mu+1}$ and $r=p$ in Lemma \ref{MEL}, it follows
$$||K(w_1)||_{L_{\tau_\om}^{\frac{4\al}{2\al\mu+1}} W_x^{1-\al,p}} \lesssim ||w||_{Y_{\tau_\om,1}}^2 + ||h^\om||_{Y_{\tau_\om,1}}^2,$$
from which and \eqref{RH} we obtain
\begin{equation}\label{RM2}
    w_1=K(w_1)\in L_{\tau_\om}^{\frac{4\al}{2\al \mu + 1}} W_x^{1-\al,p}.
\end{equation}

To show $w_1 \in L_{\mu;\tau_\om}^2 \Dot{H}_x^\al$, decompose $M(f,g)$ into 
 \begin{equation}\label{ME11}
 M(f,g)=I_1+I_2:=
 \int_0^\frac{t}{2}e^{-(t-\tau)(-\Delta)^\al} B(f,g) \, d\tau + \int_\frac{t}{2}^t e^{-(t-\tau)(-\Delta)^\al} B(f,g) \, d\tau.
 \end{equation}
On one hand, by using Corollary \ref{MR1} with $\zeta = 2\al(\mu+1)$, and the inequality \eqref{M11},
we get that the term $I_1$ defined in \eqref{ME11} satisfies
\begin{equation}\label{RM11}
    \begin{aligned}
        \|I_1\|_{L_{\mu;{\tau_\om}}^2 \Dot{H}_x^{\al}} 
        & =  \left|\left|\int_0^\frac{t}{2} t^\mu e^{-(t-\tau)(-\Delta)^\al} \Lambda^{2\al(\mu+1)} \Lambda^{-\al(2\mu+1)} B(f,g) \, d\tau \right|\right|_{L_{\tau_\om}^2 L_x^2} \\
        & \lesssim  ||\Lambda^{-\al(2\mu+1)} B(f,g)||_{L_{\tau_\om}^2 L_x^2}  \\
        & \lesssim ||f||_{L_{\tau_\om}^a L_x^q} ||g||_{L_{\tau_\om}^b W_x^{1-\al(2\mu+1),p}} + ||f||_{L_{\tau_\om}^b W_x^{1-\al(2\mu+1), p}} ||g||_{L_{\tau_\om}^a L_x^q} \\
        & \lesssim ||f||_{Y_{\tau_\om,1}} ||g||_{Y_{\tau_\om,1}}.
    \end{aligned}
\end{equation}

On the other hand, to estimate the term $I_2$ given in \eqref{ME11}, by using Lemma \ref{PQ}, one obtains
\begin{align*}
     ||I_2||_{L_{\mu;\tau_\om}^2 \Dot{H}_x^\al} & = \left|\left| \int_\frac{t}{2}^t t^\mu e^{-(t-\tau)(-\Delta)^\al} \Lambda^{\al+\frac{1}{2}} \Lambda^{-\frac{1}{2}} B(f,g) \, d\tau \right|\right|_{L_{\tau_\om}^2 L_x^2} \\
     & \lesssim \left|\left| \int_\frac{t}{2}^t t^\mu (t-\tau)^{-\frac{\al+\frac{1}{2}}{2\al}} ||\Lambda^{-\frac{1}{2}} B(f,g)||_{L_x^2} \, d\tau \right|\right|_{L_{\tau_\om}^2} \\
     & \lesssim \left|\left| \int_\frac{t}{2}^t t^{\mu-\left(\frac{3\mu+1}{4}-\frac{1}{8\al}\right)} (t-\tau)^{-\frac{\al+\frac{1}{2}}{2\al}} \tau^{\frac{3\mu+1}{4}-\frac{1}{8\al}} ||\Lambda^{-\frac{1}{2}}B(f,g)||_{L_x^2} \, d\tau \right|\right|_{L_{\tau_\om}^2} \\
     & \lesssim \left|\left| \int_\frac{t}{2}^t (t-\tau)^{\mu-\left(\frac{3\mu+1}{4}-\frac{1}{8\al}\right)-\frac{\al+\frac{1}{2}}{2\al}} \tau^{\frac{3\mu+1}{4}-\frac{1}{8\al}} ||\Lambda^{-\frac{1}{2}} B(f,g)||_{L_x^2} \, d\tau \right|\right|_{L_{\tau_\om}^2},
 \end{align*}
 by noting that $\mu-(\frac{3\mu +1}{4}-\frac{1}{8\alpha})\le 0$.
 
 From the definition of $a$ given in \eqref{DPA}, one has the identity 
 $$-\mu+\left(\frac{3\mu+1}{4}-\frac{1}{8\al}\right)+\frac{\al+\frac{1}{2}}{2\al} = 1-\left( \frac{1}{a} + \frac{1-2\al(\mu-1)}{8\al} - \frac{1}{2} \right),$$
 so by using Lemma \ref{EY} and the estimate \eqref{M11}, it follows
 \begin{align*}
     ||I_2||_{L_{\mu;\tau_\om}^2 \Dot{H}_x^\al} & \lesssim \Big|\Big| \tau^{\frac{3\mu+1}{4}-\frac{1}{8\al}} ||\Lambda^{-\frac{1}{2}} B(f,g)||_{L_x^2} \Big|\Big|_{L_{\tau_\om}^m} \\
     & \lesssim \Big|\Big| \tau^{\frac{3\mu+1}{4}-\frac{1}{8\al}} \left( ||f||_{L_x^q} ||g||_{W_x^{\frac{1}{2},p}}  + ||g||_{L_x^q} ||f||_{W_x^{\frac{1}{2},p}} \right) \Big|\Big|_{L_{\tau_\om}^m},
 \end{align*}
 where $\frac{1}{m}=\frac{1}{a} + \frac{1-2\al(\mu-1)}{8\al}$. Thus, by using Hölder's inequality, one gets
\begin{align*}
    ||I_2||_{L_{\mu;\tau_\om}^2 \Dot{H}_x^\al} & \lesssim ||f||_{L_{\tau_\om}^a L_x^q} ||g||_{L_{\frac{3\mu+1}{4}-\frac{1}{8\al};\tau_\om}^{\frac{8\al}{1-2\al(\mu-1)}} W_x^{\frac{1}{2},p}} + ||g||_{L_{\tau_\om}^a L_x^q} ||f||_{L_{\frac{3\mu+1}{4}-\frac{1}{8\al};\tau_\om}^{\frac{8\al}{1-2\al(\mu-1)}} W_x^{\frac{1}{2},p}} \\
         & \lesssim ||f||_{Y_{\tau_\om,1}} ||g||_{Y_{\tau_\om,1}},
\end{align*}
which together with the estimate \eqref{RM11} implies
$$||K(w_1)||_{L_{\mu;\tau_\om}^2 \Dot{H}_x^\al} \lesssim ||w_1||_{Y_{\tau_\om,1}}^2 +  ||h^\om||_{Y_{\tau_\om,1}}^2,$$
by using the form \eqref{IF} of the operator $K$. Since $w_1,h^\om \in Y_{\tau_\om,1}$, one has
\begin{equation}\label{RM12}
    w_1=K(w_1) \in L_{\mu;\tau_\om}^2 \Dot{H}_x^\al.
\end{equation}

Combining \eqref{RM1}-\eqref{RM2} and \eqref{RM12}, we conclude the regularity \eqref{RM} in this case.

\hspace{.02in}
\noindent \textbf{\underline{$\bullet$ Case 2: $\al \in (\frac{2}{3},1], s \in (1-2\al, -\frac{\al}{2})$.}}

\vspace{0.5em}

In this case one has $Y_{\tau_\om} = Y_{\tau_\om,2}$.

By taking $\be=0$, $\ka=\mu$, $m=\frac{4\al}{2\al(1-\mu)-1}$ and $r=p$ ($q$ respectively) in Lemma \ref{MEL2}, we have
$$||K(w)||_{L_{\mu;\tau_\om}^{\frac{4\al}{2\al(1-\mu)-1}} L_x^p} + ||K(w)||_{L_{\mu;\tau_\om}^{\frac{4\al}{2\al(1-\mu)-1}} L_x^q} \lesssim ||w||_{Y_{\tau_\om,2}}^2 + ||h^\om||_{Y_{\tau_\om,2}}^2,$$
by noting the form \eqref{IF} of the operator $K$. Thus, by using \eqref{RH} and $w_1 \in Y_{\tau_\om,2}$, one gets
\begin{equation}\label{RM3}
    w_1=K(w_1)\in X_{\tau_\om,1}.
\end{equation}

Similarly, by choosing $\be=1-\al$, $\ka=0$, $m=\frac{4\al}{2\al\mu+1}$ and $r=p$ in Lemma \ref{MEL}, it follows
$$||K(w_1)||_{L_{\tau_\om}^{\frac{4\al}{2\al\mu+1}} W_x^{1-\al,p}} \lesssim ||w||_{Y_{\tau_\om,2}}^2 + ||h^\om||_{Y_{\tau_\om,2}}^2,$$
from which and \eqref{RH} we obtain
\begin{equation}\label{RM4}
    w_1=K(w_1)\in L_{\tau_\om}^{\frac{4\al}{2\al \mu + 1}} W_x^{1-\al,p}.
\end{equation}

To show $w_1 \in L_{\mu,\tau_\om}^2 \Dot{H}_x^\al$, on one hand, for the term $I_1$ defined in \eqref{ME11}, by using Lemma \ref{PQ}, one has
\begin{align*}
    \|I_1\|_{L_{\mu;{\tau_\om}}^2 \Dot{H}_x^{\al}} 
        & =  \left|\left|\int_0^\frac{t}{2} t^\mu e^{-(t-\tau)(-\Delta)^\al} \Lambda^{1+\al} \Lambda^{-1} B(f,g) \, d\tau \right|\right|_{L_{\tau_\om}^2 L_x^2} \\
        & \lesssim \left|\left|\int_0^\frac{t}{2} t^\mu (t-\tau)^{-\frac{1+\al}{2\al}} ||\Lambda^{-1} B(f,g)||_{L_x^2} \, d\tau \right|\right|_{L_{\tau_\om}^2} \\
        & \lesssim \left|\left|\int_0^\frac{t}{2} (t-\tau)^{\mu-\frac{1+\al}{2\al}} ||f||_{L_x^p} ||g||_{L_x^q} \, d\tau \right|\right|_{L_{\tau_\om}^2},
\end{align*}
by using the estimate \eqref{M11}. Moreover, by noting
$$-\mu+\frac{1+\al}{2\al}=1-\left( \frac{2}{a}-\frac{1}{2} \right),$$
and using Lemma \ref{EY}, it holds
\begin{equation}\label{RM13}
    \begin{aligned}
        \|I_1\|_{L_{\mu;{\tau_\om}}^2 \Dot{H}_x^{\al}} \lesssim \Big|\Big| ||f||_{L_x^p} ||g||_{L_x^q} \Big|\Big|_{L_{\tau_\om}^{\frac{a}{2}}} \lesssim ||f||_{L_{\tau_\om}^a L_x^p} ||g||_{L_{\tau_\om}^a L_x^q} \lesssim ||f||_{Y_{\tau_\om,2}} ||g||_{Y_{\tau_\om,2}}.
    \end{aligned}
\end{equation}

On the other hand, by using Lemma \ref{PQ} one gets that the term $I_2$ defined in \eqref{IF} satisfies
\begin{align*}
     ||I_2||_{L_{\mu;\tau_\om}^2 \Dot{H}_x^\al} & = \left|\left| \int_\frac{t}{2}^t t^\mu e^{-(t-\tau)(-\Delta)^\al} \Lambda^{2-\al} \Lambda^{2\al-2} B(f,g) \, d\tau \right|\right|_{L_{\tau_\om}^2 L_x^2} \\
     & \lesssim \left|\left| \int_\frac{t}{2}^t t^\mu (t-\tau)^{-\frac{2-\al}{2\al}} ||\Lambda^{2\al-2} B(f,g)||_{L_x^2} \, d\tau \right|\right|_{L_{\tau_\om}^2} \\
     & \lesssim \left|\left| \int_\frac{t}{2}^t t^{\mu-1+\frac{1}{2\al}} (t-\tau)^{-\frac{2-\al}{2\al}} \tau^{1-\frac{1}{2\al}}||\Lambda^{2\al-2} B(f,g)||_{L_x^2} \, d\tau \right|\right|_{L_{\tau_\om}^2} \\
     & \lesssim \left|\left| \int_\frac{t}{2}^t (t-\tau)^{\mu-1+\frac{1}{2\al}-\frac{2-\al}{2\al}} \tau^{1-\frac{1}{2\al}}||\Lambda^{2\al-2} B(f,g)||_{L_x^2} \, d\tau \right|\right|_{L_{\tau_\om}^2},
 \end{align*}
 by noting $\mu-1+\frac{1}{2\alpha}\le 0$.
 
From the definition of $a$ given in \eqref{DPA}, one has 
$$-\mu+1-\frac{1}{2\al}+\frac{2-\al}{2\al} = 1-\left( \frac{2}{a} - \frac{1}{2} \right),$$
so by using Lemma \ref{EY} and the estimate \eqref{M11}, we have
\begin{equation}\label{I2}
    \begin{aligned}
        ||I_2||_{L_{\mu;\tau_\om}^2 \Dot{H}_x^\al} & \lesssim \Big|\Big| \tau^{1-\frac{1}{2\al}}||\Lambda^{2\al-2} B(f,g)||_{L_x^2} \Big|\Big|_{L_{\tau_\om}^{\frac{a}{2}}} \\
    & \lesssim \Big|\Big| 
    \tau^{1-\frac{1}{2\al}} \left( ||f||_{L_x^q} ||g||_{W_x^{2\al-1,p}} + ||g||_{L_x^q} ||f||_{W_x^{2\al-1,p}} \right) \Big|\Big|_{L_{\tau_\om}^{\frac{a}{2}}} \\
    & \lesssim ||f||_{L_{\tau_\om}^a L_x^q} ||g||_{L_{1-\frac{1}{2\al};\tau_\om}^a W_x^{2\al-1,p}} + ||g||_{L_{\tau_\om}^a L_x^q} ||f||_{L_{1-\frac{1}{2\al};\tau_\om}^a W_x^{2\al-1,p}} \\
    & \lesssim ||f||_{Y_{\tau_\om,2}} ||g||_{Y_{\tau_\om,2}},
    \end{aligned}
\end{equation}
from which and the inequality \eqref{RM13}, one obtains
$$||K(w_1)||_{L_{\mu;\tau_\om}^2 \Dot{H}_x^\al} \lesssim ||w_1||_{Y_{\tau_\om,2}}^2 +  ||h^\om||_{Y_{\tau_\om,2}}^2,$$
by noting the form \eqref{IF} of the operator $K$. Using the fact that $w_1$ and $h^\om$ belong to $Y_{\tau_\om,2}$, we get
\begin{equation}\label{RM14}
    w_1 = K(w_1) \in L_{\tau_\om}^2 \Dot{H}_x^\al.
\end{equation}

Combining \eqref{RM3}-\eqref{RM4} and \eqref{RM14}, we obtain the regularity of $w_1$ given in \eqref{RM}.

\end{proof}

\begin{proposition}\label{P1}
    The mild solution $w_1$ constructed in Proposition \ref{MS} satisfies
    \begin{equation}\label{P11}
        w_1 \in C\left( [0,{\tau_\om}]; \Dot{H}^{s}(\mathbb{R}^d) \right).
    \end{equation}
\end{proposition}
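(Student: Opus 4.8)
The plan is to exploit the fixed-point identity $w_1=K(w_1)$ from \eqref{IF}, which expresses $w_1$ as a sum of four bilinear terms $-M(f,g)$ with $f,g\in\{w_1,h^\om\}$. Thus it suffices to prove that each map $t\mapsto M(f,g)(t)$ is continuous from $[0,\tau_\om]$ into $\Dot{H}^s(\mathbb{R}^d)$ for every such pair; the continuity of $w_1$ then follows by summation. Since $w_1|_{t=0}=0$, continuity at $t=0$ is exactly the statement $\lim_{t\to0^+}\|w_1(t)\|_{\Dot{H}^s}=0$ required in Definition \ref{DEWS}.

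The key ingredient is a pointwise-in-time estimate of the form
\begin{equation*}
\|M(f,g)(t)\|_{\Dot{H}^s}\lesssim t^\delta\,\|f\|_{Y_{\tau_\om}}\,\|g\|_{Y_{\tau_\om}},\qquad 0<t\le\tau_\om,
\end{equation*}
for some $\delta>0$. To obtain it I would fix $\be\ge -s$ and write
\begin{equation*}
\Lambda^s M(f,g)(t)=\int_0^t \Lambda^{s+\be}e^{-(t-\tau)(-\Delta)^\al}\Lambda^{-\be}B(f,g)(\tau)\,d\tau,
\end{equation*}
apply Lemma \ref{PQ} with $\nu=s+\be\ge0$ and output exponent $2$ to bound the integrand by $(t-\tau)^{-\theta}\|\Lambda^{-\be}B(f,g)(\tau)\|_{L^q}$ for a suitable $q\le 2$, and then estimate $\Lambda^{-\be}B(f,g)$ through the product inequalities \eqref{M11}--\eqref{M12} (i.e. Lemma \ref{PR}). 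After this reduction the $\tau$-integral is handled by Lemma \ref{EY} together with Hölder's inequality in time, whose exponents close up precisely because of the scaling relation \eqref{AP}; the subcriticality of the spaces $Y_{\tau_\om}$ and $X_{\tau_\om,1}$ leaves a surplus that manifests as the positive power $t^\delta$. This computation must be carried out separately in the two regimes $Y_{\tau_\om}=Y_{\tau_\om,1}$ and $Y_{\tau_\om}=Y_{\tau_\om,2}$, choosing $\be$ and the intermediate Lebesgue exponents adapted to each case.

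Granting the pointwise bound, continuity at $t=0$ is immediate from $t^\delta\to0$. For an interior point $t_0\in(0,\tau_\om]$ and $t>t_0$ (the case $t<t_0$ being symmetric) I would use the decomposition
\begin{equation*}
M(f,g)(t)-M(f,g)(t_0)=\int_{t_0}^t e^{-(t-\tau)(-\Delta)^\al}B(f,g)\,d\tau+\Big(e^{-(t-t_0)(-\Delta)^\al}-\mathrm{Id}\Big)M(f,g)(t_0).
\end{equation*}
The first term is controlled by the same machinery applied on the shrinking interval $[t_0,t]$ and tends to $0$ as $t\to t_0^+$, while the second tends to $0$ by the strong continuity of the semigroup $e^{-t(-\Delta)^\al}$ on $\Dot{H}^s(\mathbb{R}^d)$, using that $M(f,g)(t_0)\in\Dot{H}^s$ by the pointwise bound.

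The main obstacle is the pointwise estimate of the second paragraph: one must choose $\be$, the four Lebesgue exponents in Lemma \ref{PR}, and the time-integrability indices so that the kernel singularity $(t-\tau)^{-\theta}$ stays integrable ($\theta<1$) and the Hölder pairing with the available space-time norms of $f$ and $g$ is consistent, all of this simultaneously for every admissible $(\al,s)$ with $\al\in(\frac12,1]$ and $s\in(-\al+(1-\al)_+,0)$. The term $M(h^\om,h^\om)$ is the most delicate, since $h^\om$ carries the singular temporal weights encoded in $X_{\tau_\om,1}$ and in \eqref{RH-1}, so verifying that the surplus power $t^\delta$ survives there requires the sharpest bookkeeping.
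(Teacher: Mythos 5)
Your proposal is correct in substance, and its core estimate coincides with the paper's. The paper also works from $w_1=K(w_1)$ and bounds $\|M(f,g)(t)\|_{\Dot{H}^s_x}$ by writing $\Lambda^s B(f,g)=\Lambda^{s+1}\Lambda^{-1}B(f,g)$ (i.e.\ your $\be=1$, admissible since $s>1-2\al\ge -1$ when $\al\le 1$), applying Lemma \ref{PQ} to get the kernel factor $(t-\tau)^{-\frac{1+s}{2\al}}$, the product estimate \eqref{M11} to get $\|B(f,g)\|_{\Dot{H}_x^{-1}}\lesssim \|f\|_{L_x^p}\|g\|_{L_x^q}$, and then plain Hölder in time: after inserting $\tau^{-\mu}\tau^{\mu}$ it pairs $\|f\|_{L^a_{\tau_\om}L^p_x}$ with the weighted norm $\|g\|_{X_{\tau_\om,1}}$, the remaining factor being $\big\|\tau^{-\mu}(t-\tau)^{-\frac{1+s}{2\al}}\big\|_{L^{2\al}_\tau}=Ct^{\frac{-s-2\al\mu}{2\al}}$. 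Note that this exponent is positive precisely when $s\in(1-2\al,0)$, so the surplus power $t^\delta$ you ask for is exactly what the paper's computation produces (the paper only records boundedness, but $\delta=\frac{-s-2\al\mu}{2\al}>0$ is there); your worry about closing the exponents for all admissible $(\al,s)$ resolves, and in fact a $Y$-norm-only version also closes by Hölder with exponents $(a,a,c)$, $\frac{2}{a}+\frac{1}{c}=1$, the kernel integrability condition again being equivalent to $s>1-2\al$. Where you genuinely diverge is the continuity step: the paper proves only the uniform bound $w_1\in L^\infty(0,\tau_\om;\Dot{H}^s)$ and delegates the upgrade to continuity to the second part of the proof of Proposition 3.4 in \cite{LW}, whereas you give a self-contained argument via the semigroup identity $M(t)-M(t_0)=\int_{t_0}^t e^{-(t-\tau)(-\Delta)^\al}B\,d\tau+\big(e^{-(t-t_0)(-\Delta)^\al}-\mathrm{Id}\big)M(t_0)$, which is correct and arguably more transparent than a citation.

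Two small repairs. First, Lemma \ref{EY} is a space-time estimate for fractional integration in $t$ and cannot yield a fixed-$t$ pointwise bound; drop it from the second paragraph — the Hölder step alone closes the estimate, exactly as in the paper. Second, left-continuity is not literally ``symmetric'': for $t<t_0$ the analogous identity places $\big(\mathrm{Id}-e^{-(t_0-t)(-\Delta)^\al}\big)$ on the moving term $M(t)$ rather than on the fixed element $M(t_0)$, so bare strong continuity of the semigroup on $\Dot{H}^s$ does not apply directly; you need a dominated-convergence argument on the difference of integrands (or an anchoring at $t_0$). Both are routine fixes, not gaps in the strategy.
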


\begin{proof}
    It is enough to show that
    \begin{equation}\label{P12}
        w_1 \in L^\infty \left( 0,{\tau_\om}; \Dot{H}^s(\mathbb{R}^d) \right).
    \end{equation}
If \eqref{P12} is true, then the continuity given in \eqref{P11} can be obtained in a way similar to that given in the second part of the proof of Proposition 3.4 given in \cite{LW}.

Now, let us verify the property \eqref{P12}. By using Lemma \ref{PQ}, the estimate \eqref{M11} and Hölder's inequality, we get that for any $t \in (0,\tau_\om]$,
\begin{equation}\label{U-1}
    \begin{aligned}
        ||M(f,g)||_{\Dot{H}_x^s} & = \left|\left| \int_0^t e^{-(t-\tau)(-\Delta)^\al} \Lambda^s B(f,g) \, d\tau \right|\right|_{L_x^2}  \lesssim \int_0^t (t-\tau)^{-\frac{1+s}{2\al}} ||B(f,g)||_{\Dot{H}_x^{-1}} \, d\tau \\
    & \lesssim \int_0^t \tau^{-\mu} (t-\tau)^{-\frac{1+s}{2\al}} \tau^{\mu} \Big( ||f||_{L_x^p} ||g||_{L_x^q} \Big) \, d\tau \\ 
    & \lesssim \left( \int_0^t \left( \tau^{-\mu} (t-\tau)^{-\frac{1+s}{2\al}} \right)^{2\al} d\tau \right)^\frac{1}{2\al} ||f||_{L_{\tau_\om}^a L_x^p} ||g||_{L_{\mu,\tau_\om}^{\frac{4\al}{2\al(1-\mu)-1}} L_x^q}.
    \end{aligned}
\end{equation}
Note that $s\in (1-2\al,0)$ in this case, one obtains
\begin{align*}
    \int_0^t \left( \tau^{-\mu} (t-\tau)^{-\frac{1+s}{2\al}} \right)^{2\al} d\tau
    = C t^{-s-2\al \mu} = 
    \left\{
    \begin{aligned}
       C t^{s-1+2\al} \leq C, \quad & s\in \left( 1-2\al,\frac{1-2\al}{2} \right), \\
       Ct^{-s} \leq C, \quad & s\in \left[ \frac{1-2\al}{2},0 \right),
    \end{aligned}
    \right. 
\end{align*}
which together with \eqref{U-1} implies
$$||M(f,g)||_{\Dot{H}_x^s} \lesssim ||f||_{L_{\tau_\om}^a L_x^p} ||g||_{L_{\mu,\tau_\om}^{\frac{4\al}{2\al(1-\mu)-1}} L_x^q} \lesssim ||f||_{Y_{\tau_\om}} ||g||_{X_{\tau_\om,1}}.$$
Thus, we conclude the claim \eqref{P12} from the form \eqref{IF} of the operator $K$ and $w,h^\om \in Y_{\tau_\om} \cap X_{\tau_\om,1}$.

\end{proof}

\begin{proposition}\label{P2}
    The mild solution $w_1$ constructed in Proposition \ref{MS} satisfies
    \begin{equation}\label{P21}
        t^\mu w_1 \in C\left([0,\tau_\om];L^2(\mathbb{R}^d) \right).
    \end{equation}
\end{proposition}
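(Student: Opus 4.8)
The plan is to read $w_1$ off from its Duhamel representation $w_1=K(w_1)=-M(w_1,w_1)-M(w_1,h^\om)-M(h^\om,w_1)-M(h^\om,h^\om)$ and to prove the single statement that $t^\mu M(f,g)\in C([0,\tau_\om];L^2(\mathbb{R}^d))$ whenever $f,g$ range over $w_1$ and $h^\om$; summing the four contributions then gives \eqref{P21}. By Propositions \ref{MS} and \ref{PH} both $w_1$ and $h^\om$ lie in $Y_{\tau_\om}\cap X_{\tau_\om,1}$ (and, for $\al\le1$, in $L_{\tau_\om}^{\frac{4\al}{2\al\mu+1}}W_x^{1-\al,p}$ by \eqref{RH-1} and \eqref{RM2}), so every space-time norm invoked below is finite. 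There are two things to check: the uniform bound $\sup_{0<t\le\tau_\om}t^\mu\|M(f,g)(t)\|_{L^2}<\infty$, and the continuity of $t\mapsto t^\mu M(f,g)(t)$ into $L^2$ on all of $[0,\tau_\om]$.

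For the bound, when $\mu=0$ it is immediate: writing $B(f,g)=\Lambda^\al\big(\Lambda^{-\al}B(f,g)\big)$ and applying Lemma \ref{ML} reduces matters to $\|\Lambda^{-\al}B(f,g)\|_{L^2_{\tau_\om}L^2}$, which is finite since by \eqref{M11} (with $\be=\al$) and the scaling relations \eqref{AB}, \eqref{AP} the product $\|f\|_{W_x^{1-\al,p}}\|g\|_{L_x^q}+\|g\|_{W_x^{1-\al,p}}\|f\|_{L_x^q}$ lies in $L^2(0,\tau_\om)$. When $\mu>0$ I use the splitting $M(f,g)=I_1+I_2$ of \eqref{ME11}. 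On $[0,t/2]$ the kernel is regular, so with $B(f,g)=\Lambda^{\al(2\mu+1)}\Lambda^{-\al(2\mu+1)}B(f,g)$ Lemma \ref{PQ} yields the factor $(t-\tau)^{-\mu-\frac12}\lesssim t^{-\mu-\frac12}$, and together with \eqref{M11} and Hölder in time ($\frac1a+\frac1b=\frac12$) this gives $t^\mu\|I_1(t)\|_{L^2}\lesssim\|f\|_{Y_{\tau_\om}}\|g\|_{Y_{\tau_\om}}$. On $[t/2,t]$ one has $\tau\sim t$, so the weight $t^\mu$ may be transferred onto $\tau$; inserting a fractional power $\Lambda^{\gamma}$ with $\gamma<\al$ (e.g. $\gamma=\frac12$ for $Y_{\tau_\om,1}$) keeps $(t-\tau)^{-\gamma/2\al}$ locally integrable, which is where $\al>\frac12$ enters, and Lemma \ref{EY} combined with \eqref{M11} then controls $I_2$ exactly as in the derivations of \eqref{RM11}--\eqref{I2}, only with the target regularity lowered from $\Dot{H}^\al$ to $L^2$; the relevant auxiliary index is again $\tfrac{3\mu+1}{4}-\tfrac{1}{8\al}$ for $Y_{\tau_\om,1}$ (resp. $1-\tfrac1{2\al}$ for $Y_{\tau_\om,2}$), and the net power of $t$ produced is nonnegative by \eqref{DPA}, \eqref{AP}.

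For continuity, fix $t_0\in(0,\tau_\om]$ and decompose $M(f,g)(t)-M(f,g)(t_0)$ into the short piece $\int_{t_0}^t e^{-(t-\tau)(-\Delta)^\al}B(f,g)\,d\tau$, whose $L^2$ norm tends to $0$ as $t\to t_0$ because the interval of integration shrinks, and the main piece $\int_0^{t_0}\big(e^{-(t-\tau)(-\Delta)^\al}-e^{-(t_0-\tau)(-\Delta)^\al}\big)B(f,g)\,d\tau$, which tends to $0$ by the strong continuity of the generalized heat semigroup on $L^2$ together with dominated convergence, the integrable majorant being supplied by the bound on $\|\Lambda^{-\gamma}B(f,g)(\tau)\|_{L^2}$ used above; since $t^\mu$ is continuous, this yields continuity on $(0,\tau_\om]$. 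Continuity at $t=0$ follows from the same computation: the controlling time-norms in the bound are absolutely continuous integrals restricted to $(0,t)$ (from $I_1$) and $(t/2,t)$ (from $I_2$), hence $t^\mu\|M(f,g)(t)\|_{L^2}\to0$ as $t\to0^+$, consistent with $w_1|_{t=0}=0$.

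The main obstacle is the uniform bound for $I_2$, namely taming the singular kernel $(t-\tau)^{-\gamma/2\al}$ near $\tau=t$ against the weight $t^\mu$: this forces the precise choice of $\gamma$ and of the weighted auxiliary spaces built into $Y_{\tau_\om}$, and one must verify that the resulting exponent of $t$ is nonnegative so that the estimate is uniform up to $t=\tau_\om$. Once this is in hand the continuity, including the vanishing at $t=0$, comes essentially for free from the same estimates.
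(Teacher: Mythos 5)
Your skeleton — reduce to the uniform bound $\sup_{0<t\le\tau_\om}t^\mu\|M(f,g)(t)\|_{L_x^2}<\infty$ for $f,g\in\{w_1,h^\om\}$ and then obtain continuity, including the vanishing at $t=0$, from the same estimates — is exactly the paper's (the paper proves \eqref{P22} and defers the continuity upgrade to the second part of the proof of Proposition 3.4 in \cite{LW}; your direct semigroup-continuity argument is an acceptable substitute). Your $\mu=0$ case coincides with the paper's: Lemma \ref{ML} applied to $\Lambda^\al\Lambda^{-\al}B(f,g)$, the estimate \eqref{M11}, and H\"older with $\frac1a+\frac1b=\frac12$. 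For $\mu>0$, however, the paper does something simpler than your $I_1+I_2$ splitting from \eqref{ME11}: it estimates the Duhamel integral over all of $[0,t]$ at once, writing $B=\Lambda\Lambda^{-1}B$ so that Lemma \ref{PQ} only produces $(t-\tau)^{-\frac{1}{2\al}}$, distributing the weight via $t^\mu\lesssim(t-\tau)^\mu+\tau^\mu$, and applying H\"older against the weighted norms $\|f\|_{L_{\mu;\tau_\om}^{\frac{4\al}{2\al(1-\mu)-1}}L_x^p}\|g\|_{L_{\mu;\tau_\om}^{\frac{4\al}{2\al(1-\mu)-1}}L_x^q}$ with $\frac1m=\mu+\frac1{2\al}$. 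Since $w_1,h^\om\in X_{\tau_\om,1}$ was already secured in \eqref{RM1}, \eqref{RM3} and \eqref{RH}, this single estimate covers both regimes $Y_{T,1}$ and $Y_{T,2}$ with no case distinction.

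Two concrete defects in your $\mu>0$ treatment. First, Lemma \ref{EY} cannot deliver what you ask of it: it is an $L^p_t\to L^q_t$ bound with $q<\infty$, so running \eqref{RM11}--\eqref{I2} ``with the target lowered to $L^2$'' yields control of $\|t^\mu I_2\|_{L_{\tau_\om}^2 L_x^2}$, not of $\sup_t t^\mu\|I_2(t)\|_{L_x^2}$, which is what \eqref{P22} requires. The repair is plain H\"older on $(\frac t2,t)$: with $\ga=\frac12$ and $\ka=\frac{3\mu+1}{4}-\frac{1}{8\al}$ one gets $t^{\mu-\ka}\bigl(\int_{t/2}^t(t-\tau)^{-\frac{m'}{4\al}}d\tau\bigr)^{1/m'}$ times the $L^m$ product norm with $\frac1m=\frac1a+\frac{1-2\al(\mu-1)}{8\al}$, and the exponents close: $\frac{m'}{4\al}=\frac{2}{2\al(1-\mu)+1}<1$ precisely because $\mu<1-\frac{1}{2\al}$, and the net power of $t$ is exactly $0$, not merely nonnegative. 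The order of operations matters: if you first absorb $t^{\mu-\ka}\le(t-\tau)^{\mu-\ka}$ as in the paper's $\dot H^\al$ computation, the kernel exponent becomes exactly $-1$ and the H\"older integral diverges logarithmically. Second, your $I_1$ decomposition $B=\Lambda^{\al(2\mu+1)}\Lambda^{-\al(2\mu+1)}B$ is only meaningful in the $Y_{T,1}$ regime, where $s\ge-\frac\al2$ forces $\mu\le\frac{1-\al}{2\al}$, hence $1-\al(2\mu+1)\ge0$ and the norm $L_T^bW_x^{1-\al(2\mu+1),p}$ belongs to the space. In the $Y_{T,2}$ regime ($s<-\frac\al2$, so $\al(2\mu+1)>1$ while $\mu>0$) neither \eqref{M11} with $\be=\al(2\mu+1)$ nor that norm is available; there $I_1$ must be run through $\Lambda^{-1}B$ with the unweighted $L_T^aL_x^p\times L_T^aL_x^q$ norms, as in Case 2 of Proposition \ref{MS} — your parenthetical about the index $1-\frac1{2\al}$ addresses $I_2$ only. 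Both defects are repairable along the lines indicated, but as written the uniform bound for $\mu>0$ is not established; the paper's whole-interval estimate through $X_{\tau_\om,1}$ sidesteps both issues at once.
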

\begin{proof}
    The proof is similar to that given in Proposition \ref{P1}. We only shall show that
    \begin{equation}\label{P22}
        t^\mu w_1 \in L^\infty \left( 0,{\tau_\om}; L^2(\mathbb{R}^d) \right),
    \end{equation}
    from which we shall obtain \eqref{P21} by performing the same argument as in the second part of the proof of Proposition 3.4 in \cite{LW}.

Now, let us verify the property \eqref{P22} in the following two situations.

\hspace{.02in}
\noindent \textbf{\underline{$\bullet$ Case 1: $s \in \left( 1-2\al,\frac{1-2\al}{2} \right)$.}}

\vspace{0.5em}

In this case one has $\mu>0$, so by using Lemma \ref{PQ}, the estimate \eqref{M11} and Hölder's inequality, we obtain that for any $t \in (0,\tau_\om]$,
\begin{align*}
    ||t^\mu M(f,g)||_{L_x^2} & = \left|\left| \int_0^t t^\mu e^{-(t-\tau)(-\Delta)^\al} B(f,g) \, d\tau \right|\right|_{L_x^2}  \lesssim \int_0^t t^\mu (t-\tau)^{-\frac{1}{2\al}} ||\Lambda^{-1} B(f,g)||_{L_x^2} \, d\tau \\
    & \lesssim \int_0^t \Big( (t-\tau)^\mu + \tau^\mu \Big) \tau^{-2\mu} (t-\tau)^{-\frac{1}{2\al}} \tau^{2\mu} \Big( ||f||_{L_x^p} ||g||_{L_x^q} \Big) \, d\tau \\ 
    & \lesssim \left( \int_0^t \left\{ \Big( (t-\tau)^\mu + \tau^\mu \Big) \tau^{-2\mu} (t-\tau)^{-\frac{1}{2\al}} \right\}^m d\tau \right)^\frac{1}{m} ||f||_{L_{\mu;\tau_\om}^{\frac{4\al}{2\al(1-\mu)-1}} L_x^p} ||g||_{L_{\mu;\tau_\om}^{\frac{4\al}{2\al(1-\mu)-1}} L_x^q},
\end{align*}
where $\frac{1}{m} = \mu + \frac{1}{2\al}$. Note that
$$\int_0^t \left\{ \Big( (t-\tau)^\mu + \tau^\mu \Big) \tau^{-2\mu} (t-\tau)^{-\frac{1}{2\al}} \right\}^m d\tau \leq C,$$
it follows
$$||t^\mu M(f,g)||_{L_x^2} \lesssim ||f||_{L_{\mu;\tau_\om}^{\frac{4\al}{2\al(1-\mu)-1}} L_x^p} ||g||_{L_{\mu;\tau_\om}^{\frac{4\al}{2\al(1-\mu)-1}} L_x^q} \lesssim ||f||_{X_{\tau_\om,1}} ||g||_{X_{\tau_\om,1}},$$
which implies the claim \eqref{P22} from the form \eqref{IF} of the operator $K$ and $w_1,h^\om \in X_{\tau_\om,1}$.

\hspace{.02in}
\noindent \textbf{\underline{$\bullet$ Case 2: $s \in \left[ \frac{1-2\al}{2},0 \right)$.}}

\vspace{0.5em}

In this case one has $\mu=0$ and $Y_T=Y_{T,1}$. Using Lemma \ref{ML}, the estimate \eqref{M11}, and Hölder's inequality, one gets that for any $t \in (0,\tau_\om]$,
\begin{align*}
    ||M(f,g)||_{L_x^2} & = \left|\left| \int_0^t e^{-(t-\tau)(-\Delta)^\al} \Lambda^{\al} \Lambda^{-\al} B(f,g) \, d\tau \right|\right|_{L_x^2} \lesssim ||\Lambda^{-\al} B(f,g)||_{L_T^2 L_x^2} \\
    & \lesssim ||f||_{L_T^b W_x^{1-\al,p}} ||g||_{L_T^a L_x^q} + ||f||_{L_T^a L_x^q} ||g||_{L_T^b W_x^{1-\al,p}} \lesssim ||f||_{Y_{\tau_\om,1}} ||g||_{Y_{\tau_\om,1}}.
\end{align*}
Thus, we conclude the regularity \eqref{P22} by using the definition \eqref{IF} of the operator $K$ and $w_1,h^\om \in Y_{\tau_\om,1}$.
    
\end{proof}

\noindent \textbf{Proof of Theorem \ref{DT} for $\al \in \left( \frac{1}{2},1 \right]$ with $d \geq 2$:}

\vspace{0.5em}

The problem \eqref{DE} shall be studied for $t$ near zero and  away from zero separately.

    (1) When $t$ near zero. 

From Proposition \ref{MS} we have a mild solution $w_1$ to the problem \eqref{DE} on $[0,\tau_\om]$. Now, let us show that $w_1$ is a weak solution on $[0,\tau_\om]$ of the problem \eqref{DE} in the sense of Definition \ref{DEWS}.

Similar to Lemma 11.3 given in \cite{LR}, from the definition of $K(w)$ given in \eqref{IF}, one gets that 
\begin{equation}\label{WS1}
    \partial_t K(w_1) = -(-\Delta)^\al K(w_1) - B(w_1,w_1) - B(w_1,h^\om) - B(h^\om,w_1) - B(h^\om,h^\om),
\end{equation}
in the sense of $\mathcal{D}'(\mathbb{R}^d)$ and a.e. $t \in [0,\tau_\om]$. Moreover, due to $w_1=K(w_1)$, we obtain that $w_1$ satisfies the following equation
\begin{equation}\label{WS2}
    \partial_t w_1 = -(-\Delta)^\al w_1 - B(w_1,w_1) - B(w_1,h^\om) - B(h^\om,w_1) - B(h^\om,h^\om),
\end{equation}
in $\mathcal{D}'(\mathbb{R}^d)$ and a.e. $t \in [0,\tau_\om]$. Meanwhile, Propositions \ref{MS}-\ref{P2} imply
$$t^\mu w_1 \in L^\infty \left(0,{\tau_\om}; L_\si^2(\mathbb{R}^d)\right) \cap L^2\left( 0,{\tau_\om}; \Dot{H}_\si^\al(\mathbb{R}^d) \right),$$
$$ w_1 \in C\left( [0,{\tau_\om}]; \Dot{H}^s(\mathbb{R}^d) \right) \cap L^a \left( 0,\tau_\om; L^\la(\mathbb{R}^d) \right),$$
and
$$\lim_{t \rightarrow 0^+ } ||w_1(t)||_{\Dot{H}_x^s} = 0.$$
Therefore, $w_1$ is a weak solution of the problem \eqref{DE} in the sense of Definition \ref{DEWS} on $[0,{\tau_\om}]$.

(2) When $t$ away from zero.

Note that Proposition \ref{P2} implies
$$||w_1\left(\frac{{\tau_\om}}{2}\right)||_{L_x^2} \lesssim \left( \frac{{\tau_\om}}{2} \right)^{-\mu} ||w_1||_{L_{\mu; {\tau_\om}}^\infty L_x^2} \leq C_{\tau_\om}.$$
Now, take $w_1\left(\frac{{\tau_\om}}{2} \right)$ as a new initial data of the problem \eqref{DE} and use energy method on $\left[\frac{{\tau_\om}}{2}, T\right]$.

Applying the standard energy estimate in the problem \eqref{DE}, we have
$$\frac{d}{dt}||w||_{L_x^2}^2 + ||\Lambda^\al w||_{L_x^2}^2 \leq C \left( ||w||_{L_x^2}^2 ||\Lambda^{1-\al} h^\om||_{L_x^p}^{\frac{4\al}{2\al \mu+1}} + ||\Lambda^{1-\al} h^\om||_{L_x^q}^2 ||h^\om||_{L_x^p}^2 \right),$$
from which and Grönwall's inequality, one obtains that for any given $T>0$ and $t\in [\frac{\tau_\om}{2}, T]$, it follows
\begin{equation}\label{WS8}
    E_w(t) \leq C \left(  || w\left( \frac{{\tau_\om}}{2} \right) ||_{L_x^2}^2 + \int_{\frac{{\tau_\om}}{2}}^T ||\Lambda^{1-\al} h^\om||_{L_x^q}^2 ||h^\om||_{L_x^p}^2 \, d\tau \right) \exp{\left( \int_{\frac{{\tau_\om}}{2}}^T ||\Lambda^{1-\al} h^\om||_{L_x^p}^{\frac{4\al}{2\al \mu+1}} d\tau \right)},
\end{equation}
where
$$E_w(t)= ||w(t)||_{L_x^2}^2 + \int_{\frac{{\tau_\om}}{2}}^t ||\Lambda^\al w||_{L_x^2} d\tau.$$

Using Lemma \ref{PQ} and $\frac{1}{p}+\frac{1}{q}=\frac{1}{2}$, one has
$$||\Lambda^{1-\al} h^\om||_{L_x^q}^2 ||h^\om||_{L_x^p}^2 \lesssim t^{-\frac{1-\al-2s}{\al}-\frac{d}{2\al}},
$$
from which and $\frac{1-\al-2s}{\al}+\frac{d}{2\al}>1$, we get
\begin{equation}\label{WS9}
    \begin{aligned}
        \int_{\frac{{\tau_\om}}{2}}^T ||\Lambda^{1-\al} h^\om||_{L_x^q}^2 ||h^\om||_{L_x^p}^2 d\tau 
        \lesssim \int_{\frac{{\tau_\om}}{2}}^T \tau^{-\frac{1-\al-2s}{\al}-\frac{d}{2\al}} d\tau \leq C_{\tau_\om}.
    \end{aligned}
\end{equation}
Similarly, one obtains
\begin{equation}\label{WS10}
    \int_{\frac{{\tau_\om}}{2}}^T ||\Lambda^{1-\al} h^\om||_{L_x^p}^{\frac{4\al}{2\al \mu+1}} d\tau \lesssim \int_{\frac{{\tau_\om}}{2}}^T \tau^{\left\{ -\frac{1-\al-s}{2\al}-\frac{d}{2\al}\left( \frac{1}{2}-\frac{1}{p} \right) \right\}\frac{4\al}{2\al \mu+1}} d\tau \leq C_{\tau_\om},
\end{equation}
by using 
$$\left\{ \frac{1-\al-s}{2\al}+\frac{d}{2\al}\left( \frac{1}{2}-\frac{1}{p} \right) \right\}\frac{4\al}{2\al \mu+1} > 1.$$

Plugging \eqref{WS9}-\eqref{WS10} into the right hand side of \eqref{WS8}, we obtain the following a priori bound 
\begin{equation}\label{WS16}
    E_w(t) \leq C_{\tau_\om}, \quad \forall t \in \left[ \frac{\tau_\om}{2},T \right].
\end{equation}

Next, we estimate the $L^2 \left( \frac{\tau_\om}{2},T; H_x^{-\frac{3d}{4}} \right)$ norm of $\partial_t w$. For any $\psi \in L^2 \left(\frac{\tau_\om}{2},T;H_x^{\frac{3d}{4}}\right)$, by using Hölder's inequality, Gagliardo–Nirenberg inequality and the Sobolev embedding $H_x^\al \hookrightarrow L_x^{\frac{2d}{d-2\al}}$, we have
\begin{align*}
    \left| \int_{\frac{\tau_\om}{2}}^T \langle B(f,g),\psi \rangle \, d\tau \right| & = \left| \int_{\frac{\tau_\om}{2}}^T \langle (f \cdot \nabla) \psi,g \rangle \, d\tau \right| \lesssim \int_{\frac{\tau_\om}{2}}^T ||f||_{L_x^2} ||\nabla \psi||_{L_x^{\frac{d}{\al}}} ||g||_{L_x^{\frac{2d}{d-2\al}}}  \, d\tau \\
        & \lesssim \int_{\frac{\tau_\om}{2}}^T ||f||_{L_x^2} || \psi||_{L_x^2}^{\frac{d-4(1-\al)}{3d}} || \psi||_{H_x^{\frac{3d}{4}}}^{\frac{2d+4(1-\al)}{3d}} ||g||_{H_x^\al}  \, d\tau \\
        & \lesssim \int_{\frac{\tau_\om}{2}}^T ||f||_{L_x^2} || \psi||_{H_x^{\frac{3d}{4}}} ||g||_{H_x^\al}  \, d\tau \\
        & \lesssim ||f||_{L^\infty(\frac{\tau_\om}{2},T;L_x^2)} ||g||_{L^2(\frac{\tau_\om}{2},T;H_x^\al)} ||\psi||_{L^2(\frac{\tau_\om}{2},T;H_x^{\frac{3d}{4}})},
\end{align*}
which implies
\begin{equation}\label{WS-1}
    ||B(f,g)||_{L^2(\frac{\tau_\om}{2},T;H_x^{-\frac{3d}{4}})} \lesssim ||f||_{L^\infty(\frac{\tau_\om}{2},T;L_x^2)} ||g||_{L^2(\frac{\tau_\om}{2},T;H_x^\al)}.
\end{equation}
Similarly, since
\begin{align*}
    \left| \int_{\frac{\tau_\om}{2}}^T \langle (-\Delta)^\al w, \psi \rangle \, d\tau \right| \leq \int_{\frac{\tau_\om}{2}}^T ||\Lambda^\al w||_{L_x^2} ||\Lambda^\al \psi||_{L_x^2} \, d\tau 
    \lesssim ||w||_{L^2(\frac{\tau_\om}{2},T;H_x^\al)} ||\psi||_{L^2(\frac{\tau_\om}{2},T;H_x^{\frac{3d}{4}})},
\end{align*}
one obtains
\begin{equation}\label{WS-2}
    ||(-\Delta)^\al w||_{L^2(\frac{\tau_\om}{2},T;H_x^{-\frac{3d}{4}})} \lesssim ||w||_{L^2(\frac{\tau_\om}{2},T;H_x^\al)}.
\end{equation}

Combining \eqref{WS2} and \eqref{WS-1}-\eqref{WS-2}, we conclude
\begin{equation}\label{WS-3}
    \begin{aligned}
        ||\partial_t w||_{L^2(\frac{\tau_\om}{2},T;H_x^{-\frac{3d}{4}})} \lesssim ||w||_{L^2(\frac{\tau_\om}{2},T;H_x^\al)} + ||(w,h^\om)||_{L^\infty(\frac{\tau_\om}{2},T;L_x^2)} ||(w,h^\om)||_{L^2(\frac{\tau_\om}{2},T;H_x^\al)} \lesssim C_{\tau_\om},
    \end{aligned}
\end{equation}
by using the property \eqref{RH} and the bound \eqref{WS16}.

   Next, we sketch the main idea to get the existence of a solution to the problem \eqref{DE} in $[\frac{\tau_\om}{2}, T]$ with $w_1\left(\frac{{\tau_\om}}{2} \right)$ as a new initial data, by using the standard Galerkin method. First, via the Galerkin approach, we can construct a sequence of approximate solutions $\{w^N\}_{N\in \mathbb{N}}$ of  \eqref{DE} with the data being $w_1\left(\frac{{\tau_\om}}{2} \right)$, satisfying estimates similar to those given in \eqref{WS16} and \eqref{WS-3}, so by using the Aubin-Lions compactness theorem, cf. \cite[pp.102-106]{BF}, one gets a subsequence $\{w^{N_k}\}_{k \in \mathbb{N}}$ and $w_2$, such that
   \begin{equation}\label{L1}
       w^{N_k} \rightarrow w_2, \quad {\rm in}\quad
    L^2\left(\frac{{\tau_\om}}{2},T;L_{loc}^2(\mathbb{R}^d) \right),
   \end{equation}
    and
    \begin{equation}\label{L2}
        w^{N_k} \rightharpoonup w_2, \quad \mathrm{in} \quad L^\infty \left( \frac{\tau_\om}{2},T; L^2(\mathbb{R}^d) \right) \cap L^2 \left( \frac{\tau_\om}{2},T; H^\al(\mathbb{R}^d) \right).
    \end{equation}
One can verify that the limit $w_2$ is a weak solution of the problem \eqref{DE} on $[\frac{\tau_\om}{2}, T]$ by using the convergences \eqref{L1}-\eqref{L2}. Therefore, we obtain a solution $w_2$ defined on $\left[\frac{{\tau_\om}}{2},T\right]$, and satisfying
    \begin{equation}\label{L3}
        w_2 \in C_{weak} \left(\left[\frac{{\tau_\om}}{2},T\right];L_\si^2(\mathbb{R}^d) \right) \cap L^2\left(\frac{{\tau_\om}}{2},T;H_\si^\al(\mathbb{R}^d) \right),
    \end{equation}
    $$\lim_{t\rightarrow \frac{{\tau_\om}}{2}+0} ||w_2(t)-w_1(\frac{{\tau_\om}}{2})||_{L_x^2} = 0,$$
    and the energy inequality
   $$||w_2(t)||_{L_x^2}^2 + 2\int_{\frac{{\tau_\om}}{2}}^t ||\Lambda^\al w_2|| d\tau \leq ||w_1(\frac{{\tau_\om}}{2})||_{L_x^2}^2 + 2\int_{\frac{{\tau_\om}}{2}}^t \langle B(w_2,w_2), h^\om \rangle + \langle B(h^\om,w_2),h^\om \rangle d\tau,$$
    where $t \in \left[ \frac{{\tau_\om}}{2},T \right]$.
    Moreover, by using the regularity \eqref{L3} and the embedding
    $$L^\infty \left( \frac{\tau_\om}{2},T; L^2(\mathbb{R}^d) \right) \cap L^2 \left( \frac{\tau_\om}{2},T; H^\al(\mathbb{R}^d)  \right) \hookrightarrow L^a \left( \frac{\tau_\om}{2},T; L^\la(\mathbb{R}^d) \right),$$
    one additionally gets
    $$w_2 \in L^a \left( \frac{\tau_\om}{2},T; L^\la(\mathbb{R}^d) \right).$$

By using Lemma \ref{UR} we know that $w_1$ coincides with $w_2$ on $\left[ \frac{{\tau_\om}}{2},{\tau_\om} \right]$. More precisely, from \eqref{RM} and Proposition \ref{P2}, we have
$$w_1 \in L^\infty \left( \frac{{\tau_\om}}{2},{\tau_\om}; L_x^2 \right) \cap L^2 \left( \frac{{\tau_\om}}{2},{\tau_\om}; H_x^\al \right) \cap L^{\frac{4\al}{2\al \mu+1}} \left( \frac{{\tau_\om}}{2},{\tau_\om}; W_x^{1-\al,p} \right),$$
    and
    $$\lim_{t\rightarrow \frac{{\tau_\om}}{2}+0} ||w_1(t)-w_1(\frac{{\tau_\om}}{2})||_{L_x^2} = 0.$$
On the other hand, from \eqref{RH}-\eqref{RH-1} one obtains
    $$h^\om  \in X_{\tau_\om,2} \cap L_{\tau_\om}^{\frac{4\al}{2\al \mu+1}} W_x^{1-\al,p} \hookrightarrow L^\infty \left( \frac{{\tau_\om}}{2},{\tau_\om}; L_x^2 \right) \cap L^2 \left( \frac{{\tau_\om}}{2},{\tau_\om}; H_x^\al \right) \cap L^{\frac{4\al}{2\al \mu+1}} \left( \frac{{\tau_\om}}{2},{\tau_\om}; W_x^{1-\al,p} \right).$$
Hence, by using Lemma \ref{UR} on $\left[ \frac{{\tau_\om}}{2}, {\tau_\om} \right]$ with $\be=1-\al$, $r=p$ and $m=\frac{4\al}{2\al \mu+1}$, we get that $w_1=w_2$ on $\left[ \frac{{\tau_\om}}{2}, {\tau_\om} \right]$.
    
    Thus, through defining
    \begin{equation}\label{WS20}
        w(t)=\left\{
        \begin{aligned}
        & w_1(t),\,\,\,\,\,\,\,\,\,t\in [0,{\tau_\om}],\\
        & w_2(t),\,\,\,\,\,\,\,\,\,t\in [{\tau_\om},T],
        \end{aligned}
        \right.
    \end{equation}
    it is obvious that $w(t)$ is a weak solution to the initial value problem \eqref{DE} on $[0,T]$ in the sense of Definition \ref{DEWS}, the proof is completed.

    \qed

\subsection{Sketch of proof of Theorem \ref{DT} for $\al \in \left( 1,\frac{d+2}{4} \right]$ with $d \geq 3$.}
~\

Similar to Lemmas \ref{MEL}-\ref{MEL2}, one gets
\begin{equation}\label{S-2}
    ||K(w)||_{L_{\frac{1}{2\al};T}^a W_x^{1,p}} \lesssim ||(w,h^\om)||_{L_T^a L_x^p}^2 + ||(w,h^\om)||_{L_{\frac{1}{2\al};T}^a W_x^{1,p}}^2,
\end{equation}
from which and Lemma \ref{MBL} we know that the estimates \eqref{CM11}-\eqref{CM12} hold in this case, where $Y_T=Y_{T,3}$, and $Y_{T,4}$ respectively, is defined in \eqref{Y}. So by using the fixed point argument, see Proposition \ref{MS} for detail, we can obtain a mild solution $w_1$ defined on $[0,\tau_\om]$, which belongs to $Y_{\tau_\om}$ and satisfies
\begin{equation}\label{S-3}
    w_1 \in L_{\mu;\tau_\om}^{\frac{4\al}{2\al(1-\mu)-1}} L_x^q \cap L_{\mu;\tau_\om}^2 \Dot{H}_x^\al.
\end{equation}
To show the property \eqref{S-3}, similar to Lemmas \ref{MEL}-\ref{MEL2}, we find
\begin{align*}
    ||K(w_1)||_{L_{\tau_\om}^{\frac{4\al}{2\al(1-\mu)-1}} L_x^q} & \lesssim ||(w_1,h^\om)||_{L_{\tau_\om}^a L_x^p}^2 + ||(w_1,h^\om)||_{L_{\tau_\om}^a L_x^q}^2 + ||(w_1,h^\om)||_{L_{\frac{1}{2\al};\tau_\om}^a W_x^{1,p}}^2 \\
        & \lesssim ||w_1||_{Y_{\tau_\om}}^2 + ||h^\om||_{Y_{\tau_\om}}^2,
\end{align*}
which together with $w_1,h^\om \in Y_{\tau_\om}$ implies $w_1 = K(w_1) \in L_{\mu;\tau_\om}^{\frac{4\al}{2\al(1-\mu)-1}} L_x^q$. Moreover, for the term $I_2$ given in \eqref{ME11}, similar to \eqref{I2} one obtains
$$||I_2||_{L_{\mu;\tau_\om}^2 \Dot{H}_x^\al} \lesssim ||(f,g)||_{L_{\tau_\om}^a L_x^q}^2 + ||(f,g)||_{L_{\frac{1}{2\al};\tau_\om}^a W_x^{1,p}}^2 \lesssim ||f||_{Y_{\tau_\om}}^2 + ||g||_{Y_{\tau_\om}}^2,$$
from which and \eqref{RM13} we get that $w_1 = K(w_1) \in L_{\mu;\tau_\om}^2 \Dot{H}_x^\al$, by using the form \eqref{IF} of the operator $K$ and $w_1,h^\om \in Y_{\tau_\om}$.

The calculation given in \eqref{U-1} shows that the property \eqref{P12} holds as $s \in [-1,0)$.
In addition, when $s \in (-\al,-1)$, i.e. $Y_T = Y_{T,4}$, by using Lemma \ref{PQ}, the Sobolev embedding $L_x^{\frac{2d}{d-2(s+1)}} \hookrightarrow \Dot{H}_x^{s+1}$, the estimate \eqref{M11} and Hölder's inequality, we have that for any $t \in (0,\tau_\om]$,
\begin{equation}\label{S-4}
    \begin{aligned}
        ||M(f,g)||_{\Dot{H}_x^s} & = \left|\left| \int_0^t e^{-(t-\tau)(-\Delta)^\al} \Lambda^s B(f,g) \, d\tau \right|\right|_{L_x^2}  \lesssim \int_0^t  ||\Lambda^s B(f,g)||_{L_x^2} \, d\tau \\
    & \lesssim \int_0^t  ||\Lambda^{-1} B(f,g)||_{L_x^{\frac{2d}{d-2(s+1)}}} \, d\tau \lesssim \int_0^t \tau^{-\mu} \tau^{\mu} \Big( ||f||_{L_x^{\frac{2d}{2\al(1-\mu)-2s-3}}} ||g||_{L_x^q} \Big) \, d\tau \\ 
    & \lesssim \left( \int_0^t \tau^{-2\al \mu} d\tau \right)^\frac{1}{2\al} ||f||_{L_{\tau_\om}^a L_x^{\frac{2d}{2\al(1-\mu)-2s-3}}} ||g||_{L_{\mu;\tau_\om}^{\frac{4\al}{2\al(1-\mu)-1}} L_x^q} \\
    & \lesssim ||f||_{Y_{\tau_\om,4}} ||g||_{Y_{\tau_\om,4}},
    \end{aligned}
\end{equation}
which together with $w_1,h^\om \in Y_{\tau_\om,4}$ and the form \eqref{IF} of the operator $K$  implies the property \eqref{P12} also holds as $s \in (-\al,-1)$.

Moreover, by using Lemma \ref{PQ}, Hölder's inequality and the relation \eqref{AB}, for any $t \in (0,\tau_\om]$ it follows
\begin{equation}\label{S-5}
    \begin{aligned}
        ||K(w_1)||_{L_x^2} \lesssim \left( \int_0^t (t-\tau)^{-\frac{1}{\al}} d\tau \right)^{\frac{1}{2}} \Big( ||(w_1,h^\om)||_{L_T^a L_x^q}^2 + ||(w_1,h^\om)||_{L_T^b L_x^p}^2 \Big) \lesssim ||w_1||_{Y_{\tau_\om}}^2 + ||h^\om||_{Y_{\tau_\om}}^2,
    \end{aligned}
\end{equation}
the last inequality holds since $\al>1$. Combining \eqref{S-4}-\eqref{S-5} and using the argument as in the second part of the proof of Proposition 3.4 in \cite{LW}, we conclude
\begin{equation}\label{S-6}
    w_1 \in C \left( [0,\tau_\om]; L^2(\mathbb{R}^d) \right) \cap C \left( [0,\tau_\om]; \Dot{H}^s(\mathbb{R}^d) \right).
\end{equation}

The argument above ensures that $w_1$ is a weak solution on $[0,\tau_\om]$ in the sense of Definition \ref{DEWS}. Similarly, noting form \eqref{S-6} we know that $w_1(\frac{\tau_\om}{2})$ belongs to $L^2(\mathbb{R}^d)$. Thus, starting from $\frac{\tau_\om}{2}$ and using the standard energy method one can obtain a weak solution $w_2$ defined on $\left[ \frac{\tau_\om}{2},T \right]$. By using the uniqueness result given in Lemma \ref{UR} with $\be=0$, $m=a$ and $r=p$, we get that $w_1$ coincides with $w_2$ on $\left[ \frac{\tau_\om}{2},\tau_\om \right]$. Therefore, the function defined by
\begin{equation}\label{3.54}
        w(t)=\left\{
        \begin{aligned}
        & w_1(t),\,\,\,\,\,\,\,\,\,t\in [0,{\tau_\om}],\\
        & w_2(t),\,\,\,\,\,\,\,\,\,t\in [{\tau_\om},T],
        \end{aligned}
        \right.
    \end{equation}
ia a weak solution of the problem \eqref{DE}, which completes the proof of Theorem \ref{DT} in this case.

\section{The uniqueness of weak solutions for $\al=\frac{d+2}{4}$}

The aim of this section is to study the uniqueness of weak solutions to the problem \eqref{GNS} when $\al=\frac{d+2}{4}$ with $d \geq 2$. Clearly, the uniqueness of weak solutions to the problem \eqref{DE} is equivalent to the uniqueness of weak solutions to the problem \eqref{GNS}. Thus, it is enough to study the uniqueness of weak solutions to the problem \eqref{DE}, for which we have

\begin{theorem}\label{UW}
    Let $\al=\frac{d+2}{4}$ with $d \geq 2$, assume that $w$ and $\widetilde{w}$ are two weak solutions to the problem \eqref{DE} in the sense of Definition \ref{DEWS}, then $w=\widetilde{w}$.
\end{theorem}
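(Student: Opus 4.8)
The plan is to exploit the fact that, precisely when $\al=\frac{d+2}{4}$, the scaling-critical exponent satisfies $2\al-1=\frac d2$, so that the space $E_T:=L^a\big(0,T;L^\la(\mathbb{R}^d)\big)$ in which every weak solution automatically lives (Definition \ref{DEWS}, with $a,\la$ as in \eqref{DPA} and \eqref{Q}) becomes scaling invariant and, crucially, supports a \emph{closed} bilinear estimate. Indeed \eqref{DPA}--\eqref{AP} give $\tfrac{2\al}{a}+\tfrac{d}{\la}=2\al-1=\tfrac d2$. I would first show that any weak solution is a mild solution: starting from the distributional equation contained in Definition \ref{DEWS}, Duhamel's formula together with $\lim_{t\to0^+}\|w(t)\|_{\Dot{H}^s}=0$ yields the integral identity $w=K(w)$ with $K$ as in \eqref{IF}, exactly as in Lemma 11.3 of \cite{LR}; the same holds for $\widetilde w$. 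The quadratic terms make sense for this passage because each factor lies in $E_T$, so $B(f,g)=P\nabla\cdot(f\otimes g)\in L_T^{a/2}\Dot{W}^{-1,\la/2}$.

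The core is the bilinear bound $\|M(f,g)\|_{E_T}\le C\,\|f\|_{E_T}\,\|g\|_{E_T}$ for divergence-free $f,g$. To get it I would use boundedness of $P$ and the Riesz transforms on $L^\la$ together with Lemma \ref{PQ} (with $\nu=1$, $q=\tfrac\la2$, $p=\la$) to obtain, for the kernel,
\[
\big\|e^{-(t-\tau)(-\Delta)^\al}B(f,g)\big\|_{L_x^\la}\lesssim (t-\tau)^{-\theta}\,\|f\|_{L_x^\la}\,\|g\|_{L_x^\la},\qquad \theta=\tfrac1{2\al}+\tfrac{d}{2\al\la}.
\]
A direct computation shows $\theta=1-\tfrac1a$ \emph{if and only if} $4\al=d+2$, which is exactly our hypothesis; thus Lemma \ref{EY} (Hardy--Littlewood--Sobolev in time, with $p=\tfrac a2$, $q=a$, $\tau=\theta$) converts the time convolution $\int_0^t(t-\tau)^{-\theta}\|f\|_{L_x^\la}\|g\|_{L_x^\la}\,d\tau$, whose integrand lies in $L_T^{a/2}$, into a function of $L_T^a$. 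The admissibility of Lemma \ref{EY} requires $2<a<\infty$, and this holds because $a>2\Leftrightarrow\mu<\tfrac1{2\al}\Leftrightarrow s>-\al$, which is guaranteed by $s\in(-\al,0)$ (recall \eqref{mu}).

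With this estimate, set $\delta=w-\widetilde w\in E_T$. Bilinearity of $M$ and the two integral equations give $\delta=-M(\delta,w)-M(\widetilde w,\delta)-M(\delta,h^\om)-M(h^\om,\delta)$, whence for every $\tau\in(0,T]$,
\[
\|\delta\|_{E_\tau}\le C\,\|\delta\|_{E_\tau}\big(\|w\|_{E_\tau}+\|\widetilde w\|_{E_\tau}+2\|h^\om\|_{E_\tau}\big).
\]
Since $w,\widetilde w\in E_T$ and $h^\om\in Y_T\subset E_T$ by Proposition \ref{PH}, absolute continuity of the Lebesgue integral forces the three norms on the right to tend to $0$ as $\tau\to0^+$; choosing $\tau$ so small that the prefactor is $<1$ gives $\|\delta\|_{E_\tau}=0$, i.e. $w=\widetilde w$ on $[0,\tau]$. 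To reach $[0,T]$ I would either iterate the same contraction on successive short intervals $[\tau_1,\tau_2]$ — on each of which $w(\tau_1)=\widetilde w(\tau_1)$, so the inhomogeneous Duhamel parts cancel and the bilinear estimate applies with the same $\theta$ (its HLS constant being translation invariant), finitely many intervals sufficing because $\int_0^T\|w\|_{L_x^\la}^a\,dt<\infty$ — or, once $w(\tau)=\widetilde w(\tau)$ for one $\tau>0$, invoke Corollary \ref{UR-1} on $[\tau,T]$, where $w,\widetilde w,h^\om$ all belong (without weight) to $L^\infty L^2\cap L^2 H^\al\cap L^a L^\la$ and $(\be,r,m)=(0,\la,a)$ is an admissible Serrin triple of \eqref{be} (for $d\ge3$ one checks $\la\in(\tfrac{4d}{d+2},\tfrac{4d}{d-2}]$, while $d=2$ is the classical case $\al=1$).

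The main obstacle is the behaviour near $t=0$: the only available control there, $t^\mu w,\,t^\mu\widetilde w\in L^\infty L^2\cap L^2\Dot{H}^\al$, is genuinely singular when $\mu>0$, so a naive energy estimate cannot be initialized at the origin. The device that circumvents this is to argue \emph{entirely} in the scaling-critical norm $E_\tau$, whose smallness on short intervals comes for free from integrability rather than from the initial value, and to notice that the exponent balance $\theta=1-\tfrac1a$ closing the bilinear estimate holds exactly at $\al=\tfrac{d+2}{4}$ — which is precisely why uniqueness is available only at the critical exponent. A secondary technical point is the rigorous justification of the weak-to-mild passage $w=K(w)$ with the integrability recorded above, after which the rest is the routine contraction and propagation described.
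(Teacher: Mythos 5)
Your overall architecture is the same as the paper's: since $\la=p$ exactly when $\al=\frac{d+2}{4}$ (compare \eqref{DPA} and \eqref{Q}), your space $E_T$ is the paper's $L_T^aL_x^p$; your kernel bound plus Lemma \ref{EY} is precisely how the paper obtains its contraction estimates \eqref{U2}--\eqref{U3} (Lemma \ref{MBL} with $\be=0$, $m=a$, $r=p$); the short-time smallness by absolute continuity of the integral is verbatim the paper's choice of $\tau$; and your second continuation route --- Corollary \ref{UR-1} with $(\be,m,r)=(0,a,p)$ after restarting at a time where the two solutions agree in $L^2$ --- is exactly the paper's final step (the paper selects $\si\in(0,\tau)$ from the full-measure set where $w(\si)=\widetilde{w}(\si)$ in $L_x^2$; your iteration variant needs the same a.e.\ selection of the restart time). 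Your computation that $\theta=1-\frac{1}{a}$ if and only if $4\al=d+2$ is a nice explicit articulation of why the argument is confined to the critical exponent, which the paper leaves implicit in the identity $\la=p$.

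The one genuine gap is the weak-to-mild passage when $\mu>0$, i.e.\ $s\in\left(-\al,\frac{1-2\al}{2}\right)$. Knowing only that $w,h^\om\in E_T$, so that the nonlinearity lies in $L_T^{a/2}\Dot{W}_x^{-1,\la/2}$, does let you define $K(w)$, but it does not let you show $K(w)\in C\left([0,\tau];\Dot{H}^s(\mathbb{R}^d)\right)$ with $K(w)(t)\to0$ as $t\to0^+$, which is the other half of what must be combined with $\lim_{t\to0^+}\|w(t)\|_{\Dot{H}_x^s}=0$ before quoting linear uniqueness for $w-K(w)$: the natural estimate would require the semigroup (or a Sobolev embedding) to carry $L^{\la/2}$ into $L^2$, and Lemma \ref{PQ} only allows $q\le p$, i.e.\ $\la\le 4$, whereas at criticality $\la=\frac{4d}{d-(d+2)\mu}>4$ precisely when $\mu>0$. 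The paper closes this by invoking information your sketch never uses, namely the weighted bound $t^\mu w\in L^\infty L^2$ built into Definition \ref{DEWS}: interpolating it against $w\in L_T^aL_x^p$ places $w$ in $L_{\mu;\tau}^{\frac{4\al}{2\al(1-\mu)-1}}L_x^q$, after which the computations \eqref{U-1} and \eqref{S-4} yield the required continuity and vanishing of $K(w)$ in $\Dot{H}^s$; and since neither $w$ nor $K(w)$ is controlled in $L^2$ up to $t=0$, the uniqueness for the free fractional heat equation is applied to spatial mollifications $w_\epsilon-K_\epsilon(w)$ (which do satisfy \eqref{U4} with $L^2$ data) and the identity $w=K(w)$ is then recovered in $L_\tau^aL_x^p$ by letting $\epsilon\to0^+$. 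All of these ingredients are available under Definition \ref{DEWS}, so your proof is repairable exactly along the paper's lines --- but as written, the assertion that the passage works ``because each factor lies in $E_T$'' would not go through.
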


\begin{proof}
    By the definition of $p$ and $\la$, c.f. \eqref{DPA}-\eqref{Q}, one obtains that $\la=p$ when $\al=\frac{d+2}{4}$. We consider the following two cases separately.

    \hspace{.02in}
    \noindent \textbf{\underline{$\bullet$ Case 1: $s \in \left[ \frac{1-2\al}{2},0 \right)$.}}

    \vspace{0.5em}

    In this case one gets $\mu=0$. From Definition \ref{DEWS} we have
    $$w,\widetilde{w} \in L^\infty ( 0,T; L^2(\mathbb{R}^d) ) \cap L^2( 0,T; H^\al(\mathbb{R}^d) \cap L^a( 0,T; L^p(\mathbb{R}^d) ).$$
    So, by using Corollary \ref{UR-1} with $\be=0$, $m=a$ and $r=p$, we immediately obtain the uniqueness of weak solutions of the problem \eqref{DE}.
    
    \hspace{.02in}
    \noindent \textbf{\underline{$\bullet$ Case 2: $s \in \left( -\al,\frac{1-2\al}{2} \right)$.}}

    \vspace{0.5em}

    In this case one has $\mu=-\frac{s}{\al}+\frac{1}{2\al}-1>0$. Thus, we cannot use Corollary \ref{UR-1} directly since there has a singularity $t^{-\mu}$ when $t=0$.

    For any $0<t<1$, similar to \eqref{CM1}-\eqref{CM2} one obtains that the operator $K$ defined in \eqref{IF} satisfies
    \begin{equation}\label{U2}
        ||K(w)||_{L_t^a L_x^p} \leq C_0 \left( ||w||_{L_t^a L_x^p}^2 + ||h^\om||_{L_t^a L_x^p}^2 \right),
    \end{equation}
    and
    \begin{equation}\label{U3}
        ||K(w)-K(\widetilde{w})||_{L_t^a L_x^p} \leq C_0 ||w-\widetilde{w}||_{L_t^a L_x^p} \left( ||w||_{L_t^a L_x^p} + ||\widetilde{w}||_{L_t^a L_x^p} + ||h^\om||_{L_t^a L_x^p} \right),
    \end{equation}
    where $C_0$ is a positive constant.

    Since $w,\widetilde{w},h^\om \in L_T^a L_x^p$, we can choose $\tau \in (0,1]$ small enough, such that
    $$C_0 \left( ||w||_{L_\tau^a L_x^p} + ||\widetilde{w}||_{L_\tau^a L_x^p} + ||h^\om||_{L_\tau^a L_x^p} \right) \leq \frac{1}{2},$$
    where $C_0$ is the constant given on the right hand side of \eqref{U3}.
    
    Similar to \eqref{WS1} we have
    $$\partial_t K(w) = -(-\Delta)^\al K(w) - B(w,w) - B(w,h^\om) - B(h^\om,w) - B(h^\om,h^\om),$$
    in the sense of $\mathcal{D}'(\mathbb{R}^d)$ and a.e. $t \in [0,\tau]$. Thus, one gets
    \begin{equation}\label{U-10}
        \partial_t (w-K(w)) = -(-\Delta)^\al (w-K(w)), \quad \mathrm{in} \,\, \mathcal{D}'(\mathbb{R}^d) \,\, \mathrm{and} \,\, \mathrm{a.e.} \,\, t \in [0,\tau],
    \end{equation}
    by using the fact that $w$ is a weak solution.

    Moreover, using $w \in L_T^a L_x^p$, $t^\mu w \in L_T^\infty L_x^2$ and the interpolation inequality
    $$||w||_{L_{\mu;\tau}^{\frac{4\al}{2\al(1-\mu)-1}} L_x^q} \lesssim ||w||_{L_{\mu;\tau}^\infty L_x^2}^{\frac{4\al \mu}{2\al(\mu+1)-1}} ||w||_{L_\tau^a L_x^p}^{\frac{2\al(1-\mu)-1}{2\al(\mu+1)-1}},$$
    we know that $w$ belongs to $L_{\mu;\tau}^{\frac{4\al}{2\al(1-\mu)-1}} L_x^q$. So by performing the same calculation as in \eqref{U-1} and \eqref{S-4}, and using $h^\om \in Y_\tau$, one obtains
    $$K(w) \in C\left( [0,\tau];\Dot{H}^s(\mathbb{R}^d) \right).$$
    Therefore, we get
    \begin{equation}\label{U-11}
        \lim_{t \rightarrow 0^+} ||w-K(w)||_{\Dot{H}_x^s} \leq \lim_{t \rightarrow 0^+} \left( ||w||_{\Dot{H}_x^s} + ||K(w)||_{\Dot{H}_x^s} \right) = 0.
    \end{equation}

    Let $\rho(x)$ be the standard mollifier and $\rho_\epsilon=\frac{1}{\epsilon^d}\rho(\frac{x}{\epsilon})$ with $\epsilon>0$. Define
    $$w_\epsilon (t,x) = w(t,\cdot)*\rho_\epsilon (x) \quad \mathrm{and} \quad     K_\epsilon (w)(t,x) = K(w)(t,\cdot)*\rho_\epsilon (x).$$
    From \eqref{U2} and $w,h^\om \in L_T^a L_x^p$, we know that $K(w)$ belongs to $L_\tau^a L_x^p$, it is obvious that $w_\epsilon, K_\epsilon (w) \in C^\infty(\mathbb{R}^d)$ and
    \begin{equation}\label{U1}
        w_\epsilon \rightarrow w \quad \mathrm{and} \quad K_\epsilon (w) \rightarrow K(w) \quad \mathrm{in} \quad L_\tau^a L_x^p.
    \end{equation}
    
    It follows from \eqref{U-10}-\eqref{U-11} that
    \begin{equation}\label{U4}
        \left\{
        \begin{aligned}
            \partial_t (w_\epsilon-K_\epsilon (w)) & = -(-\Delta)^\al (w_\epsilon-K_\epsilon (w)), \quad \mathrm{for} \,\, \mathrm{a.e.} \,\, t \in [0,\tau], \\
            \lim_{t \rightarrow 0^+} & ||w_\epsilon - K_\epsilon (w)||_{L_x^2} = 0.
        \end{aligned}
        \right.
    \end{equation}
    Using the uniqueness result to the linear problem \eqref{U4}, we conclude that $w_\epsilon = K_\epsilon (w)$ for a.e. $(t,x) \in (0,\tau] \times \mathbb{R}^d$, thus they equal in $L_\tau^a L_x^p$.

    For any $\epsilon>0$, using the triangular inequality, one obtains
    $$||w-K(w)||_{L_\tau^a L_x^p} 
            \leq ||w-w_\epsilon||_{L_\tau^a L_x^p} + ||w_\epsilon - K_\epsilon (w)||_{L_\tau^a L_x^p} + ||K_\epsilon (w)-K(w)||_{L_\tau^a L_x^p},$$
    which yields $w=K(w)$ in $L_\tau^a L_x^p$, by passing $\epsilon \rightarrow 0^+$ and using the convergence \eqref{U1}.

    Similarly, we have $\widetilde{w} = K(\widetilde{w})$ in $L_\tau ^a L_x^p$. Using the inequality \eqref{U3}, one gets
    \begin{align*}
        ||w-\widetilde{w}||_{L_\tau^a L_x^p} & = ||K(w)-K(\widetilde{w})||_{L_\tau^aL_x^p} \\
            & \leq C_0 ||w-\widetilde{w}||_{L_\tau^a L_x^p} \left( ||w||_{L_\tau^a L_x^p} + ||\widetilde{w}||_{L_\tau^a L_x^p} + ||h^\om||_{L_\tau^a L_x^p} \right) \\
            & \leq \frac{1}{2} ||w-\widetilde{w}||_{L_\tau^a L_x^p},
    \end{align*}
    by using the definition of $\tau$. Thus, we conclude $w=\widetilde{w}$ for a.e. $t,x \in [0,\tau] \times \mathbb{R}^d$.

    Choosing $\si \in (0,\tau)$ such that $||w(\si)-\widetilde{w}(\si)||_{L_x^2}=0$, moreover, from Definition \ref{DEWS} we find
    $$w,\widetilde{w} \in L^\infty ( \si,T; L^2(\mathbb{R}^d) ) \cap L^2( \si,T; H^\al(\mathbb{R}^d) \cap L^a( \si,T; L^p(\mathbb{R}^d) ).$$
    Therefore, by using Corollary \ref{UR-1} with $\be=0$, $m=a$ and $r=p$, one obtains $w=\widetilde{w}$ for $t \in [\si,T]$ and a.e. $x \in \mathbb{R}^d$. The proof is completed.

\end{proof}

\section{Optimal time decays for $u$ and $w$}

In this section, we shall use the modified Fourier splitting method to obtain the optimal time decay for the $L^2$ norm of the difference $w=u-h^\om$, which is constructed in \eqref{WS20} and \eqref{3.54} respectively, to this end, we need an appropriate bound for the time derivative of $||w(t)||_{L_x^2}^2$ and a $L^\infty$ estimate for $\hat{w}(t,\xi)$, given in the following two lemmas.

\begin{lemma}\label{TL}
    Let $w$ be the weak solution constructed in Theorem \ref{DT}, then there exist $T_0>e$ and a constant $C>0$ such that
    \begin{equation}\label{T}
        \frac{d}{dt}||w(t)||_{L_x^2}^2 \leq -||\Lambda^\al w(t)||_{L_x^2}^2 + Ct^{-\frac{d+2}{2\al}+1+\frac{2s}{\al}}, ~\ \ \ \ \forall t>T_0.
    \end{equation}
\end{lemma}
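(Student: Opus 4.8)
The plan is to read off \eqref{T} from the energy balance satisfied by the constructed weak solution. For $t$ large we are in the regime where $w$ coincides with the Galerkin solution $w_2$, which obeys the energy inequality
\begin{equation*}
||w(t)||_{L_x^2}^2 + 2\int_{t_0}^t ||\Lambda^\al w||_{L_x^2}^2 \, d\tau \leq ||w(t_0)||_{L_x^2}^2 + 2\int_{t_0}^t \big( \langle B(w,w),h^\om \rangle + \langle B(h^\om,w),h^\om \rangle \big) \, d\tau .
\end{equation*}
In differential form (valid for a.e. $t$, which is the sense in which \eqref{T} is to be read) this gives $\frac{d}{dt}||w||_{L_x^2}^2 + 2||\Lambda^\al w||_{L_x^2}^2 \leq 2\langle B(w,w),h^\om \rangle + 2\langle B(h^\om,w),h^\om \rangle$. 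Since $w$ and $h^\om$ are divergence-free, integration by parts gives $\langle B(w,w),h^\om \rangle = -\langle B(w,h^\om),w \rangle$ and $\langle B(h^\om,w),h^\om \rangle = -\langle B(h^\om,h^\om),w \rangle$, so it suffices to bound $I := |\langle B(w,h^\om),w \rangle|$ and $II := |\langle B(h^\om,h^\om),w \rangle|$, absorbing a total of $||\Lambda^\al w||_{L_x^2}^2$ into the left-hand side and leaving only the stated power of $t$.

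For the forcing term $II$ I would use duality together with $B(h^\om,h^\om)=\mathrm{P}\,\mathrm{div}(h^\om \otimes h^\om)$:
\begin{equation*}
II \leq ||B(h^\om,h^\om)||_{\Dot{H}^{-\al}} ||\Lambda^\al w||_{L_x^2} \leq \tfrac14 ||\Lambda^\al w||_{L_x^2}^2 + C||h^\om \otimes h^\om||_{\Dot{H}^{1-\al}}^2 .
\end{equation*}
A fractional product estimate (Lemma \ref{PR} when $\al \leq 1$, and a Sobolev embedding $L^r \hookrightarrow \Dot{H}^{1-\al}$ of the negative-order space together with Hölder when $\al>1$) yields $||h^\om \otimes h^\om||_{\Dot{H}^{1-\al}} \lesssim ||h^\om||_{\Dot{H}^\ga}^2$ with the balanced choice $\ga = \frac{1-\al}{2}+\frac{d}{4}$. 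Writing $||h^\om||_{\Dot{H}^\ga} = ||\Lambda^{\ga-s} e^{-t(-\Delta)^\al}(\Lambda^s u_0^\om)||_{L_x^2}$ and applying Lemma \ref{PQ} with $q=p=2$ gives $||h^\om||_{\Dot{H}^\ga} \lesssim t^{-\frac{\ga-s}{2\al}}||u_0^\om||_{\Dot{H}^s}$, and a direct computation shows $-\frac{2(\ga-s)}{\al} = -\frac{d+2}{2\al}+1+\frac{2s}{\al}$, producing exactly the claimed time factor.

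For the stretching term $I$ I would again use duality, $I \leq ||B(w,h^\om)||_{\Dot{H}^{-\al}} ||\Lambda^\al w||_{L_x^2} \lesssim ||w \otimes h^\om||_{\Dot{H}^{1-\al}} ||\Lambda^\al w||_{L_x^2}$, but now distribute the derivatives so that $w$ carries exactly $\al$ of them: the product estimate gives $||w \otimes h^\om||_{\Dot{H}^{1-\al}} \lesssim ||w||_{\Dot{H}^\al} ||h^\om||_{\Dot{H}^{\ga_2}}$ with $\ga_2 = 1-2\al+\frac{d}{2}$, whence $I \lesssim ||h^\om||_{\Dot{H}^{\ga_2}} ||\Lambda^\al w||_{L_x^2}^2$. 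By Lemma \ref{PQ}, $||h^\om||_{\Dot{H}^{\ga_2}} \lesssim t^{-\frac{\ga_2-s}{2\al}}||u_0^\om||_{\Dot{H}^s}$, and since $\al \leq \frac{d+2}{4}$ and $s<0$ force $\ga_2-s = 1-2\al+\frac{d}{2}-s>0$, this factor tends to $0$ as $t\to\infty$. Hence there is $T_0=T_0^\om>e$ (depending on $\om$ through $||u_0^\om||_{\Dot{H}^s}$) such that the coefficient of $||\Lambda^\al w||_{L_x^2}^2$ is at most $\frac14$ for $t>T_0$, giving $I \leq \tfrac14 ||\Lambda^\al w||_{L_x^2}^2$.

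Combining the two bounds, for $t>T_0$ one has $2I+2II \leq ||\Lambda^\al w||_{L_x^2}^2 + Ct^{-\frac{d+2}{2\al}+1+\frac{2s}{\al}}$, and subtracting this from $2||\Lambda^\al w||_{L_x^2}^2$ yields \eqref{T}. The main obstacle I anticipate is the parameter bookkeeping in the fractional product estimates across the two regimes $\al \leq 1$ and $\al>1$, where the target regularity $1-\al$ changes sign and Lemma \ref{PR} must be supplemented by Sobolev embeddings of negative-order spaces; one must also verify that the chosen exponents $\ga,\ga_2$ are admissible, in particular $0 \leq \ga,\ga_2 < \frac{d}{2}$ and that the homogeneous product inequalities hold at the critical balances $2\ga-\frac d2 = 1-\al$ and $\al+\ga_2-\frac d2 = 1-\al$. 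A secondary point is justifying that the energy inequality may be read as the a.e. differential inequality \eqref{T}, which follows from the absolute continuity of $t \mapsto ||w(t)||_{L_x^2}^2$ on the interval where $w=w_2$.
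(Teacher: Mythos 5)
Your proposal follows the paper's proof in all structural respects: the starting point is the energy identity \eqref{T1}, whose right-hand side consists of exactly your $I$ and $II$ after one integration by parts (for divergence-free fields, $\langle B(w,w),h^\om\rangle=-\langle B(w,h^\om),w\rangle$ and $\langle B(h^\om,w),h^\om\rangle=-\langle B(h^\om,h^\om),w\rangle$); the $w$-quadratic term is absorbed for $t>T_0$ via the decay of a scaling-critical norm of $h^\om$, and the forcing term is handled by Young's inequality together with Lemma \ref{PQ}. The paper implements the two trilinear bounds in Lebesgue spaces, $\|\Lambda^{1-\al}h^\om\|_{L_x^{d/\al}}$ in \eqref{T2}--\eqref{T4} and $\|\Lambda^{1-\al}h^\om\|_{L_x^4}\|h^\om\|_{L_x^4}$ in \eqref{T6}--\eqref{T8}, whereas you use homogeneous bilinear Sobolev estimates; since $\dot{W}^{1-\al,d/\al}$ has the same scaling as $\dot{H}^{\ga_2}$, and $\dot{W}^{1-\al,4}\times L^4$ the same as $\dot{H}^{\ga}\times\dot{H}^{\ga}$, the time exponents coincide, and your identity $-\frac{2(\ga-s)}{\al}=-\frac{d+2}{2\al}+1+\frac{2s}{\al}$ is correct. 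Your justification of the a.e.\ differential form via the Galerkin energy inequality matches what the paper does in its closing remark of Section 5.

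There is, however, one genuine gap. The bilinear estimate $\|w\otimes h^\om\|_{\dot{H}^{1-\al}}\lesssim\|w\|_{\dot{H}^{\al}}\|h^\om\|_{\dot{H}^{\ga_2}}$ with $\al+\ga_2-\frac{d}{2}=1-\al$ requires \emph{both} indices strictly below $\frac{d}{2}$; your admissibility checklist covers $\ga,\ga_2$ but not $\al$ itself, and the constraint $\al<\frac{d}{2}$ fails precisely at $d=2$, $\al=1=\frac{d+2}{4}$ --- the critical two-dimensional case, which Theorems \ref{TW}--\ref{TD} include and where the uniqueness statement lives. There $\ga_2=0$ and your inequality reads $\|wh^\om\|_{L^2(\mathbb{R}^2)}\lesssim\|\nabla w\|_{L^2}\|h^\om\|_{L^2}$, which is false since $\dot{H}^1(\mathbb{R}^2)\not\hookrightarrow L^\infty$. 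The repair is standard and preserves the rate: by H\"older and the Ladyzhenskaya inequality, $I\le\|w\|_{L_x^4}\|\nabla w\|_{L_x^2}\|h^\om\|_{L_x^4}\lesssim\|w\|_{L_x^2}^{1/2}\|\nabla w\|_{L_x^2}^{3/2}\|h^\om\|_{L_x^4}$, and Young's inequality gives $\frac14\|\nabla w\|_{L_x^2}^2+C\|w\|_{L_x^2}^2\|h^\om\|_{L_x^4}^4$, where Lemma \ref{PQ} yields $\|h^\om\|_{L_x^4}^4\lesssim t^{2s-1}$, which is exactly $t^{-\frac{d+2}{2\al}+1+\frac{2s}{\al}}$ at $d=2$, $\al=1$; the required bound $\sup_{t>T_0}\|w(t)\|_{L_x^2}\le C$ comes from \eqref{RH1}. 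It is worth noting that the paper's absorption step is delicate at the same point: the space $L^{2d/(d-\al)}$ invoked after \eqref{T2} scales like $\dot{H}^{\al/2}$, not $\dot{H}^{\al}$, so controlling $\|w\|_{L_x^{2d/(d-\al)}}$ by interpolation $\|w\|_{L_x^2}^{1/2}\|\Lambda^\al w\|_{L_x^2}^{1/2}$ likewise uses the uniform $L^2$ bound on $w$ for $t>T_0$; your fix is of exactly this nature, so once you insert it your argument is complete.
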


\begin{proof}
    We only show the inequality \eqref{T} for $\al \in \left( \frac{1}{2},1 \right]$ with $d \geq 2$. The estimate \eqref{T} for $\al \in \left( 1,\frac{d+2}{4} \right]$ with $d \geq 3$ can be obtained similarly.
    
    Applying the standard energy estimate in the problem \eqref{DE}, we get
\begin{equation}\label{T1}
    \frac{1}{2}\frac{d}{dt}||w||_{L_x^2}^2 + ||\Lambda^\al w||_{L_x^2}^2 = \langle B(w,w), h^\om \rangle + \langle B(h^\om,w), h^\om \rangle.
\end{equation}
By using Hölder's inequality, the estimate \eqref{M12} and the Sobolev embedding $\Dot{H}_x^\al \hookrightarrow L_x^{\frac{2d}{d-\al}}$, it follows
\begin{equation}\label{T2}
    \begin{aligned}
        & |\langle B(w,w), h^\om \rangle| \lesssim  ||\Lambda^{\al-1} B(w,w) ||_{L_x^{\frac{d}{d-\al}}} ||\Lambda^{1-\al} h^\om||_{L_x^{\frac{d}{\al}}} \\
        & \hspace{.2in} \lesssim  ||w||_{L_x^{\frac{2d}{d-\al}}} ||\Lambda^\al w||_{L_x^2}  ||\Lambda^{1-\al} h^\om||_{L_x^{\frac{d}{\al}}} \lesssim ||\Lambda^\al w||_{L_x^2}^2 ||\Lambda^{1-\al} h^\om||_{L_x^{\frac{d}{\al}}}.
    \end{aligned}
\end{equation}
Using Lemma \ref{PQ}, one has
\begin{equation}\label{T3}
    \begin{aligned}
        & ||\Lambda^{1-\al} h^\om||_{L_x^{\frac{d}{\al}}} = ||\Lambda^{1-\al-s} e^{-t(-\Delta)^\al} \Lambda^s u_0^\om||_{L_x^{\frac{d}{\al}}} \\ 
        & \hspace{.2in} \lesssim t^{-\frac{1-\al-s}{2\al}-\frac{d}{2\al}\left( \frac{1}{2}-\frac{\al}{d} \right)} ||u_0^\om||_{\Dot{H}_x^s} \leq C_1 t^{-\frac{1-\al-s}{2\al}-\frac{d}{2\al}\left( \frac{1}{2}-\frac{\al}{d} \right)}.
    \end{aligned}
\end{equation}
Since $-\frac{1-\al-s}{2\al}-\frac{d}{2\al}\left( \frac{1}{2}-\frac{\al}{d} \right)<0$, we can choose $T_0>e$ large enough such that
\begin{equation}\label{T4}
    C_1 T_0^{-\frac{1-\al-s}{2\al}-\frac{d}{2\al}\left( \frac{1}{2}-\frac{\al}{d} \right)} \leq \frac{1}{4},
\end{equation}
where $C_1$ is the constant given on the right hand side of \eqref{T3}. Combining \eqref{T2}-\eqref{T4} we obtain
\begin{equation}\label{T5}
    |\langle B(w,w), h^\om \rangle| \leq \frac{1}{4} ||\Lambda^\al w||_{L_x^2}^2, \quad \forall t>T_0.
\end{equation}

On the other hand, by using Hölder's inequality and the estimate \eqref{M12}, one has
\begin{equation}\label{T6}
    \begin{aligned}
        |\langle B(h^\om,w), h^\om \rangle| & \lesssim ||\Lambda^\al w||_{L_x^2} ||\Lambda^{-\al} B(h^\om,h^\om)||_{L_x^2} \lesssim ||\Lambda^\al w||_{L_x^2} ||\Lambda^{1-\al} h^\om||_{L_x^4} ||h^\om||_{L_x^4} \\
        & \leq \frac{1}{4} ||\Lambda^\al w||_{L_x^2}^2 + C||\Lambda^{1-\al} h^\om||_{L_x^4}^2 ||h^\om||_{L_x^4}^2.
    \end{aligned}
\end{equation}
In the same calculation as in \eqref{T3}, it implies
$$||\Lambda^{1-\al} h^\om||_{L_x^4} \lesssim t^{-\frac{1-\al-s}{2\al}-\frac{d}{8\al}} ~\ \ \ \ \mathrm{and} ~\ \ \ \ ||h^\om||_{L_x^4} \lesssim t^{\frac{s}{2\al}-\frac{d}{8\al}},$$
from which and \eqref{T6} we have
\begin{equation}\label{T8}
    |\langle B(h^\om,w), h^\om \rangle| \leq \frac{1}{4} ||\Lambda^\al w||_{L_x^2}^2 + C t^{-\frac{d+2}{2\al}+1+\frac{2s}{\al}}.
\end{equation}

Plugging \eqref{T5} and \eqref{T8} into the right hand side of \eqref{T1} we conclude \eqref{T}.

\end{proof}

\begin{lemma}\label{PL}
    Let $w$ be the weak solution constructed in Theorem \ref{DT}, then there exists a constant $C>0$ such that
    \begin{equation}\label{PE}
        |\hat{w}(t,\xi)| \leq C |\xi| \left( \int_{T_0}^t ||w(\tau)||_{L_x^2}^2 d\tau + t^{1+\frac{s}{\al}} \right), \quad \forall t>T_0, \, \forall \xi \in \mathbb{R}^d,
    \end{equation}
    where $\hat{w}(t,\xi)$ denotes the partial Fourier transform of $w(t,x)$ in $x$.
\end{lemma}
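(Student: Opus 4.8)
The plan is to pass to the Fourier side and exploit the divergence structure of the nonlinearity. Writing $N := B(w,w)+B(w,h^\om)+B(h^\om,w)+B(h^\om,h^\om)$, the equation \eqref{WS2} for $w$ shows that, for each fixed $\xi$, the scalar function $t\mapsto \hat w(t,\xi)$ solves the linear ODE
\begin{equation*}
    \partial_t \hat w(t,\xi) + |\xi|^{2\al}\hat w(t,\xi) = -\widehat N(t,\xi).
\end{equation*}
The structural point I would use is that, since $\mathrm{div}\,f=0$ gives $B(f,g)=\mathrm{P}\,\mathrm{div}(f\otimes g)$, the symbol of $B$ carries exactly one derivative, and the Fourier multiplier of the Leray projection $\mathrm{P}$ has operator norm $1$; hence, by the elementary bound $\|\widehat h\|_{L_x^\infty}\le\|h\|_{L_x^1}$ together with Cauchy--Schwarz,
\begin{equation*}
    |\widehat{B(f,g)}(\xi)| \le |\xi|\,\|f\otimes g\|_{L_x^1}\le |\xi|\,\|f\|_{L_x^2}\|g\|_{L_x^2}.
\end{equation*}
This single $|\xi|$ is the source of the factor $|\xi|$ in \eqref{PE}.

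Using $w(0)=0$, I would integrate the ODE by Duhamel for a.e. $\xi$,
\begin{equation*}
    \hat w(t,\xi) = -\int_0^t e^{-(t-\tau)|\xi|^{2\al}}\widehat N(\tau,\xi)\,d\tau,
\end{equation*}
bound the exponential by $1$, and apply the product estimate above to each of the four terms of $N$. Combining this with $\|w\|_{L_x^2}\|h^\om\|_{L_x^2}\le \tfrac12\|w\|_{L_x^2}^2+\tfrac12\|h^\om\|_{L_x^2}^2$ yields
\begin{equation*}
    |\hat w(t,\xi)| \le C|\xi|\int_0^t\big(\|w(\tau)\|_{L_x^2}^2+\|h^\om(\tau)\|_{L_x^2}^2\big)\,d\tau.
\end{equation*}
For the heat flow I would invoke Lemma \ref{PQ} (which is global in time, and is the content of the $X_{T,2}$ bound) in the form $\|h^\om(\tau)\|_{L_x^2}=\|\Lambda^{-s}e^{-\tau(-\Delta)^\al}\Lambda^s u_0^\om\|_{L_x^2}\lesssim \tau^{\frac{s}{2\al}}\|u_0\|_{\Dot{H}^s}$. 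Since $s>-\al$ forces $\tfrac{s}{\al}>-1$, we get $\int_{T_0}^t\tau^{\frac{s}{\al}}\,d\tau\lesssim t^{1+\frac{s}{\al}}$, which produces exactly the $t^{1+\frac{s}{\al}}$ term.

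It then remains to split $\int_0^t=\int_0^{T_0}+\int_{T_0}^t$, keeping $\int_{T_0}^t\|w\|_{L_x^2}^2\,d\tau$ explicit and showing that $\int_0^{T_0}\big(\|w\|_{L_x^2}^2+\|h^\om\|_{L_x^2}^2\big)\,d\tau$ is a finite constant (depending on $\al,d,s,\om$). Because $t^{1+\frac{s}{\al}}\ge T_0^{1+\frac{s}{\al}}>0$ for $t>T_0>e$, any such constant is $\lesssim t^{1+\frac{s}{\al}}$ and is absorbed into the last term, giving \eqref{PE}.

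The main obstacle is this finiteness near the initial time, where $w$ carries the singularity $\|w(\tau)\|_{L_x^2}\lesssim \tau^{-\mu}$ inherited from $(1\land t)^\mu w\in L^\infty L_x^2$ in \eqref{RH1}. Here I would check that $\mu<\tfrac12$ holds throughout the admissible range $s\in(-\al+(1-\al)_+,0)$: for $\al\le 1$ one has $\mu<1-\tfrac{1}{2\al}\le\tfrac12$, and for $\al>1$ one has $\mu<\tfrac{1}{2\al}<\tfrac12$ (this is precisely where the constraint on $s$ enters). Consequently $2\mu<1$ and $\int_0^{T_0}\tau^{-2\mu}\,d\tau<\infty$, while the $h^\om$ part is square-integrable in time since $\tfrac{s}{\al}>-1$. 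A secondary technical point is to justify the Duhamel representation for the weak solution $w$, which is glued from the mild solution on $[0,\tau_\om]$ (where $w=K(w)$ is literally the integral equation \eqref{IF}) and the energy solution on $[\tau_\om,T]$ (where $w$ satisfies \eqref{WS2} in $\mathcal{D}'(\mathbb{R}^d)$ for a.e. $t$); taking the Fourier transform shows $\hat w(\cdot,\xi)$ is absolutely continuous for a.e. $\xi$, so the scalar ODE integrates to the stated formula.
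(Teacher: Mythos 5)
Your proposal is correct and follows essentially the same route as the paper's proof: Fourier transform the equation for $w$, bound the nonlinearity by $|\widehat{B(f,g)}(\xi)|\lesssim|\xi|\,\|f\|_{L_x^2}\|g\|_{L_x^2}$ via the divergence structure and $\|\widehat{h}\|_{L_\xi^\infty}\le\|h\|_{L_x^1}$, apply Duhamel with the exponential bounded by $1$, use $\|h^\om(\tau)\|_{L_x^2}\lesssim\tau^{\frac{s}{2\al}}$ with $s>-\al$ to produce the $t^{1+\frac{s}{\al}}$ term, and split $\int_0^t=\int_0^{T_0}+\int_{T_0}^t$ using $\|w(\tau)\|_{L_x^2}\lesssim\tau^{-\mu}$ with $\mu\in[0,\tfrac12)$ to absorb the near-origin contribution into $t^{1+\frac{s}{\al}}$ (noting $1\lesssim t^{1+\frac{s}{\al}}$ for $t>T_0$). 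Your additional remark justifying the Duhamel representation for the glued solution (mild on $[0,\tau_\om]$, energy solution afterwards) is a point the paper passes over silently, and your verification that $\mu<\tfrac12$ across the full admissible range of $(\al,s)$ matches the paper's claim.
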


\begin{proof}
    It is obvious that $w$ satisfies
    \begin{equation}\label{w}
    \partial_t w + (-\Delta)^\al w = -B(w,w)-B(w,h^\om)-B(h^\om,w)-B(h^\om,h^\om).
\end{equation}
Taking the partial Fourier transform in $x\in \mathbb{R}^d$ on both sides of \eqref{w}, we obtain that $\hat{w}(t,\xi)$ satisfies
$$\partial_t \hat{w} + |\xi|^{2\al} \hat{w} = F(t,\xi) := - \mathcal{F}_{x\to \xi} \left\{ B(w,w)+B(w,h^\om)+B(h^\om,w)+B(h^\om,h^\om) \right\}$$
and $\hat{w}(0,\xi)=0$.

For any divergence-free fields $f$ and $g$, by using the properties of Fourier transform $\mathcal{F}$ and the Leray projection $\mathrm{P}$, and Hölder's inequality, one gets
$$\left| \mathcal{F} \left\{ B(f,g) \right\} \right| = \left| \mathcal{F} \left\{ \nabla \cdot \mathrm{P} (f \otimes g) \right\} \right| \lesssim |\xi| \cdot |\mathcal{F} \mathrm{P} (f \otimes g)|  \lesssim |\xi| \cdot ||(f \otimes g)||_{L_x^1} \lesssim |\xi| \cdot ||f||_{L_x^2} ||g||_{L_x^2},$$
which implies
\begin{align*}
    |F(t,\xi)| & \lesssim |\xi| \left( ||w||_{L_x^2}^2 +  ||h^\om||_{L_x^2}^2 \right).
\end{align*}
Using Lemma \ref{PQ}, we have
\begin{equation}\label{DH}
    ||h^\om||_{L_x^2} = ||e^{-t(-\Delta)^\al}u_0^\om||_{L_x^2} = ||\Lambda^{-s} e^{-t(-\Delta)^\al} \Lambda^s u_0^\om||_{L_x^2} \lesssim t^{\frac{s}{2\al}} ||u_0^\om||_{\Dot{H}_x^s} \lesssim t^{\frac{s}{2\al}}.
\end{equation}

Thus, for any $t>0$, it follows
\begin{equation}\label{w13}
    \begin{aligned}
        |\hat{w}(t,\xi)| & \lesssim \int_0^t e^{-(t-\tau)|\xi|^{2\al}} |F(t,\xi)| \, d\tau \lesssim \int_0^t e^{-(t-\tau)|\xi|^{2\al}} |\xi| \left( ||w||_{L_x^2}^2 + \tau^{\frac{s}{\al}} \right) d\tau \\
        & \lesssim |\xi| \left( \int_0^t ||w(\tau)||_{L_x^2}^2 d\tau + \int_0^t \tau^{\frac{s}{\al}} d\tau \right):= |\xi| \left( J_1+J_2 \right).
    \end{aligned}
\end{equation}
Clearly, by using $s>-\al+(1-\al)_+ \geq -\al$, one obtains
\begin{equation}\label{J2}
    J_2 = \int_0^t \tau^{\frac{s}{\al}} d\tau = C t^{1+\frac{s}{\al}}.
\end{equation}
In addition, for the term $J_1$ given in \eqref{w13}, noting from the regularity \eqref{RH1} that
\begin{equation*}
    ||w(t)||_{L_x^2} \leq \left\{
    \begin{aligned}
        Ct^{-\mu}, \quad 0 \leq & t \leq T_0, \\
        C,\quad \quad \quad & t>T_0,
    \end{aligned}
    \right.
\end{equation*}
by using $\mu \in \left[ 0,\frac{1}{2} \right)$, we know that for any $t>T_0$,
\begin{equation}\label{J1}
    \begin{aligned}
        J_1 & = \int_0^t ||w(\tau)||_{L_x^2}^2 d\tau = \int_0^{T_0} ||w(\tau)||_{L_x^2}^2 d\tau + \int_{T_0}^t ||w(\tau)||_{L_x^2}^2 d\tau \\
        & \lesssim \int_0^{T_0} \tau^{-2\mu} d\tau + \int_{T_0}^t ||w(\tau)||_{L_x^2}^2 d\tau \lesssim 1 + \int_{T_0}^t ||w(\tau)||_{L_x^2}^2 d\tau.
    \end{aligned}
\end{equation}

Combining \eqref{w13}-\eqref{J1}, we conclude the estimate \eqref{PE} by noting $1 \lesssim t^{1+\frac{s}{\al}}$ for any $t>T_0$.

\end{proof}

Inspired by \cite{W}, we have the following lemma, the proof of which shall be given in Appendix \ref{A} for the reader's convenience.

\begin{lemma}\label{L}
    For any fixed $t_0 \geq 0$, let $g(t),x(t),y(t)$ be three continuous functions defined on $[t_0,\infty)$, and $y(t)$ be strictly positive and $x(t),y(t)$ be non-negative respectively. Assume that for any $\tau_2 > \tau_1 \geq t_0$, it holds
    \begin{equation}\label{L-1}
        x(\tau_2) + \int_{\tau_1}^{\tau_2} g^{2\al}(\tau) x(\tau) d\tau \leq x(\tau_1) + \int_{\tau_1}^{\tau_2} y(\tau) d\tau.
    \end{equation}
    Then, one has
    \begin{equation}\label{L-7}
        x(t) e^{\int_{t_0}^t g^{2\al}(\tau) d\tau} \leq x(t_0) + \int_{t_0}^t e^{ \int_{t_0}^\tau g^{2\al}(\tau') d\tau' } y(\tau) d\tau,
    \end{equation}
    for any $t>t_0$.
\end{lemma}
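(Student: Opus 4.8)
The plan is to recognize \eqref{L-1} as the integral form of the differential inequality $x' + g^{2\al} x \le y$ and to run the classical integrating-factor argument; the only genuine difficulty is that $x$ is assumed merely continuous, so it cannot be differentiated directly. I would first set $G(t) = \int_{t_0}^t g^{2\al}(\tau)\,d\tau$ and $z(t) = \int_{t_0}^t g^{2\al}(\tau) x(\tau)\,d\tau$, both of which are $C^1$ since $g^{2\al}$ and $g^{2\al} x$ are continuous, with $G(t_0) = z(t_0) = 0$. Rewriting \eqref{L-1} as
\[
\Big( x(\tau_2) + z(\tau_2) - \int_{t_0}^{\tau_2} y \Big) \le \Big( x(\tau_1) + z(\tau_1) - \int_{t_0}^{\tau_1} y \Big),
\]
I claim that $H(t) := x(t) + z(t) - \int_{t_0}^t y(\tau)\,d\tau$ is non-increasing on $[t_0,\infty)$: the displayed inequality is exactly $H(\tau_2) \le H(\tau_1)$ for all $\tau_2 > \tau_1 \ge t_0$. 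In particular $H$ is continuous and of bounded variation on every compact subinterval, $H(t_0) = x(t_0)$, and the associated Lebesgue--Stieltjes measure $dH$ is non-positive.

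Next I would introduce $W(t) := x(t) + z(t) = H(t) + \int_{t_0}^t y$, so that $z'(t) = g^{2\al}(t) x(t) = g^{2\al}(t)\big( W(t) - z(t) \big)$, that is $z' + g^{2\al} z = g^{2\al} W$. Multiplying by the integrating factor $e^{G}$ and using $z(t_0) = G(t_0) = 0$ together with $g^{2\al} e^{G} = (e^{G})'$ gives
\[
z(t) e^{G(t)} = \int_{t_0}^t g^{2\al}(\tau) W(\tau) e^{G(\tau)}\,d\tau = \int_{t_0}^t W(\tau)\, d\big(e^{G(\tau)}\big).
\]
Since $W$ is continuous and of bounded variation and $e^{G} \in C^1$, integration by parts for Stieltjes integrals (valid because both factors are continuous, hence share no discontinuities) yields
\[
\int_{t_0}^t W\, d(e^{G}) = W(t) e^{G(t)} - W(t_0) - \int_{t_0}^t e^{G(\tau)}\, dW(\tau),
\]
with $W(t_0) = x(t_0)$. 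Combining the last two displays with $x(t) e^{G(t)} = \big(W(t) - z(t)\big) e^{G(t)}$, the term $W(t) e^{G(t)}$ cancels and I obtain the identity
\[
x(t) e^{G(t)} = x(t_0) + \int_{t_0}^t e^{G(\tau)}\, dW(\tau).
\]

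Finally, writing $dW = dH + y\,dt$ and using that $e^{G} > 0$ while $dH \le 0$, the contribution of $dH$ is non-positive, so
\[
x(t) e^{G(t)} = x(t_0) + \int_{t_0}^t e^{G(\tau)}\,dH(\tau) + \int_{t_0}^t e^{G(\tau)} y(\tau)\,d\tau \le x(t_0) + \int_{t_0}^t e^{G(\tau)} y(\tau)\,d\tau,
\]
which is exactly \eqref{L-7}. The main obstacle is precisely the low regularity of $x$: since $x$ is only continuous and need not be of bounded variation, the naive computation $\frac{d}{dt}(x e^{G}) = (x' + g^{2\al} x) e^{G} \le y e^{G}$ is not justified. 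Transferring the monotonicity encoded in \eqref{L-1} to the bounded-variation function $H$ and invoking Stieltjes integration by parts is what makes the argument rigorous; I note in passing that no sign condition on $x$ is actually used in this derivation.
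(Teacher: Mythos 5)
Your argument is correct, and it takes a genuinely different route from the paper's. The paper proceeds by discretization: it partitions $[t_0,t]$ into $n$ equal subintervals, telescopes the differences $x(t_j)e^{\int_{t_0}^{t_j}g^{2\al}} - x(t_{j-1})e^{\int_{t_0}^{t_{j-1}}g^{2\al}}$, expands the exponential increments by Taylor's formula with $O(1/n^2)$ error control, applies the hypothesis \eqref{L-1} on each small subinterval (both to bound $x(t_j)-x(t_{j-1})+\int_{t_{j-1}}^{t_j}g^{2\al}x$ and to bound $x(t_j)-x(\tau)$ by $\int_\tau^{t_j}y$, the latter step discarding $-\int_\tau^{t_j}g^{2\al}x\le 0$ and hence using $x\ge 0$), and finally lets $n\to\infty$. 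You instead observe that \eqref{L-1} says precisely that $H(t)=x(t)+\int_{t_0}^t g^{2\al}x - \int_{t_0}^t y$ is non-increasing, which converts the merely continuous $x$ into a continuous BV function to which the classical integrating-factor computation can be applied via Riemann--Stieltjes integration by parts (legitimate here: $W=H+\int_{t_0}^{\cdot}y$ is continuous and BV, $e^{G}$ is $C^1$, so the integrals exist and the boundary formula holds), and the sign information enters only through $dH\le 0$. Each approach has its merits: the paper's is entirely elementary, needing nothing beyond Riemann integration and Taylor's formula, at the cost of somewhat delicate error bookkeeping; yours is shorter and more conceptual, at the cost of invoking Stieltjes machinery, and it is slightly more general --- as you note, it nowhere uses $x\ge 0$, whereas the paper's estimate \eqref{L-5} does, and your argument would also tolerate $g^{2\al}$ merely locally integrable rather than continuous (only $G, z\in C^1$ in the a.e./absolutely continuous sense is needed), while the paper's Taylor step leans on continuity of $g$. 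All the steps you flag as potentially delicate --- existence of the Stieltjes integrals, absence of shared discontinuities, the decomposition $dW = dH + y\,dt$ of the Lebesgue--Stieltjes measure --- do hold as claimed, so there is no gap.
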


With Lemmas \ref{TL}-\ref{L} in hand, we shall prove Theorem \ref{TD}.

\vspace{1em}

\noindent \textbf{Proof of Theorem \ref{TD}:}

Let $T_0$ be determined in \eqref{T4}, for any $g(t) \in C\left( [T_0,\infty), \mathbb{R}^+ \right)$, define
\begin{equation}\label{M3}
    B(t) := \left\{ \xi \in \mathbb{R}^d : |\xi| \leq g(t) \right\}.
\end{equation}

Using Lemma \ref{TL} and Plancherel's identity, we know that for any $t>T_0$, it holds
\begin{equation}\label{M1}
     \begin{aligned}
         \frac{d}{dt}||w(t)||_{L_x^2}^2 & \leq - \int_{\mathbb{R}^d} |\xi|^{2\al} |\hat{w}(t,\xi)|^2 d\xi + Ct^{-\frac{d+2}{2\al}+1+\frac{2s}{\al}}.
     \end{aligned}
\end{equation}
Split
\begin{equation}\label{M2}
    - \int_{\mathbb{R}^d} |\xi|^{2\al} |\hat{w}(t,\xi)|^2 d\xi = - \int_{B(t)^c} |\xi|^{2\al} |\hat{w}(t,\xi)|^2 d\xi - \int_{B(t)} |\xi|^{2\al} |\hat{w}(t,\xi)|^2 d\xi,
\end{equation}
where $B(t)$ is defined in \eqref{M3}.

Note that
\begin{equation}\label{M4}
    \begin{aligned}
        & - \int_{B(t)^c} |\xi|^{2\al} |\hat{w}(t,\xi)|^2 d\xi 
        \leq - g^{2\al}(t) \int_{B(t)^c} |\hat{w}(t,\xi)|^2 d\xi \\
        & \hspace{.2in} = - g^{2\al}(t) \int_{\mathbb{R}^d} |\hat{w}(t,\xi)|^2 d\xi + g^{2\al}(t) \int_{B(t)} |\hat{w}(t,\xi)|^2 d\xi.
    \end{aligned}
\end{equation}
Plugging \eqref{M2}-\eqref{M4} into \eqref{M1}, we obtain that for any $t>T_0$,
\begin{equation}\label{M5}
    \frac{d}{dt}||w(t)||_{L_x^2}^2 + g^{2\al}(t) \int_{\mathbb{R}^d} |\hat{w}(t,\xi)|^2 d\xi \leq g^{2\al}(t) \int_{B(t)} |\hat{w}(t,\xi)|^2 d\xi + Ct^{-\frac{d+2}{2\al}+1+\frac{2s}{\al}}.
\end{equation}

For any $\tau_2>\tau_1\geq T_0$, integrating on both sides of \eqref{M5} from $\tau_1$ to $\tau_2$, it follows
\begin{align*}
    & ||w(\tau_2)||_{L_x^2}^2 + \int_{\tau_1}^{\tau_2} g^{2\al}(\tau) ||w(\tau)||_{L_x^2}^2 d\tau \\
        \leq & ||w(\tau_1)||_{L_x^2}^2 + \int_{\tau_1}^{\tau_2} \left( g^{2\al}(\tau) \int_{B(\tau)} |\hat{w}(\tau,\xi)|^2 d\xi + C\tau^{-\frac{d+2}{2\al}+1+\frac{2s}{\al}} \right)  d\tau,
\end{align*}
from which and Lemma \ref{L}, one gets that for any $t>T_0$,
\begin{equation}\label{M-2}
    \begin{aligned}
        & ||w(t)||_{L_x^2}^2 e^{\int_{T_0}^t g^{2\al}(\tau) d\tau} \\
        \leq & ||w(T_0)||_{L_x^2}^2 + C \int_{T_0}^t e^{\int_{T_0}^{\tau} g^{2\al}(\tau') d\tau'} \left( g^{2\al}(\tau) \int_{B(\tau)} |\hat{w}(\tau,\xi)|^2 d\xi + \tau^{-\frac{d+2}{2\al}+1+\frac{2s}{\al}} \right) d\tau.
    \end{aligned}
\end{equation}

Using Lemma \ref{PL} and the polar coordinate, we have
\begin{equation}\label{M6}
    \begin{aligned}
        \int_{B(t)} |\hat{w}(t,\xi)|^2 d\xi & \lesssim \left( \left( \int_{T_0}^t ||w(\tau)||_{L_x^2}^2 d\tau \right)^2 + t^{2+\frac{2s}{\al}} \right) \int_{B(t)} |\xi|^2 d\xi \\
        & \lesssim \left( \left( \int_{T_0}^t ||w(\tau)||_{L_x^2}^2 d\tau \right)^2 + t^{2+\frac{2s}{\al}} \right) \int_0^{g(t)} r^{d-1} \cdot r^2 dr \\
        & \lesssim g^{d+2}(t) \left( \left( \int_{T_0}^t ||w(\tau)||_{L_x^2}^2 d\tau \right)^2 + t^{2+\frac{2s}{\al}} \right).
    \end{aligned}
\end{equation}
Putting \eqref{M6} into the right hand side of \eqref{M-2}, one concludes
\begin{equation}\label{M-11}
    \begin{aligned}
        ||w(t)||_{L_x^2}^2 e^{\int_{T_0}^t g^{2\al}(\tau) d\tau}
        \leq ||w(T_0)||_{L_x^2}^2
        + C \int_{T_0}^t e^{\int_{T_0}^{\tau} g^{2\al}(\tau') d\tau'} G(\tau) d\tau,
    \end{aligned}
\end{equation}
where
\begin{equation}\label{M-12}
    G(\tau) = g^{d+2+2\al}(\tau) \left( \left( \int_{T_0}^\tau ||w(\tau')||_{L_x^2}^2 d\tau' \right)^2 + \tau^{2+\frac{2s}{\al}} \right) + \tau^{-\frac{d+2}{2\al}+1+\frac{2s}{\al}}.
\end{equation}

Using the property \eqref{RH1}, we know that
$$||w(t)||_{L_x^2} \leq C, \quad \forall t>T_0,$$
which together with $s<0$ implies
\begin{equation}\label{M-3}
    G(t) \leq g^{d+2+2\al}(t) \cdot t^2 + t^{-\frac{d+2}{2\al}+1+\frac{2s}{\al}}, \quad \forall t>T_0.
\end{equation}

Plugging \eqref{M-3} into \eqref{M-11}, one obtains
\begin{equation}\label{M-4}
    \begin{aligned}
        ||w(t)||_{L_x^2}^2 e^{\int_{T_0}^t g^{2\al}(\tau) d\tau}
        \leq ||w(T_0)||_{L_x^2}^2 + C \int_{T_0}^t e^{\int_{T_0}^{\tau} g^{2\al}(\tau') d\tau'} \left( g^{d+2\al+2}(\tau) \cdot \tau^2 + \tau^{-\frac{d+2}{2\al}+1+\frac{2s}{\al}} \right) d\tau.
    \end{aligned}
\end{equation}

Choosing $g(t)=(\frac{d}{t})^{\frac{1}{2\al}}$, it is obvious that $e^{\int_{T_0}^t g^{2\al}(\tau) d\tau} = (\frac{t}{T_0})^d$, thus the inequality \eqref{M-4} yields
\begin{equation}\label{M-5}
    \begin{aligned}
        ||w(t)||_{L_x^2}^2 (\frac{t}{T_0})^d & \leq ||w(T_0)||_{L_x^2}^2 + C \int_{T_0}^t (\frac{\tau}{T_0})^d \left\{ (\frac{d}{\tau})^{\frac{d+2+2\al}{2\al}} \tau^2 + \tau^{-\frac{d+2}{2\al}+1+\frac{2s}{\al}}  \right\} d\tau \\
        & \lesssim 1 + \int_{T_0}^t \left( \tau^{d-\frac{d+2}{2\al}+1} + \tau^{d-\frac{d+2}{2\al}+1+\frac{2s}{\al}} \right) d\tau \\
        & \lesssim 1 + t^{d-\frac{d+2}{2\al}+2} + t^{d-\frac{d+2}{2\al}+2+\frac{2s}{\al}} \\
        & \lesssim 1 + t^{d-\frac{d+2}{2\al}+2},
    \end{aligned}
\end{equation}
the last inequality holds since $s<0$.

Multiplying $(\frac{t}{T_0})^{-d}$ on both sides of \eqref{M-5} and noting that $-d \leq -\frac{d+2}{2\al}+2$, we conclude
\begin{equation}\label{M-6}
    ||w(t)||_{L_x^2}^2 \lesssim t^{-d} + t^{-\frac{d+2}{2\al}+2} \lesssim t^{-\frac{d+2}{2\al}+2}, \quad \forall t>T_0.
\end{equation}

In order to improve the decay rate \eqref{M-6}, we need the following result, the proof of which shall be given in Appendix \ref{A}.

\begin{lemma}\label{IML}
    Let $T_0$ be determined in \eqref{T4} and $\ga \in (-1,0)$, if one has
    \begin{equation}\label{IM1}
        ||w(t)||_{L_x^2}^2 \lesssim t^\ga, \quad \forall t>T_0.
    \end{equation}
    Then we can improve the decay rate \eqref{IM1} to
    \begin{equation}\label{IM-3}
        ||w(t)||_{L_x^2}^2 \lesssim t^{-\frac{d+2}{2\al}+2+2\ga}+t^{-\frac{d+2}{2\al}+2+\frac{2s}{\al}}, \quad \forall t>T_0.
    \end{equation}
\end{lemma}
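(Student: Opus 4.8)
The plan is to re-run the modified Fourier splitting argument that produced the first decay rate \eqref{M-6}, but to feed in the sharper hypothesis \eqref{IM1} exactly where the crude bound $\|w(t)\|_{L_x^2}^2\le C$ was used before. The point is that the inequality \eqref{M5}, and hence \eqref{M-11} with $G$ given by \eqref{M-12}, were derived for an arbitrary continuous weight $g$ and used no decay of $w$ whatsoever. So I would start directly from
$$\|w(t)\|_{L_x^2}^2\, e^{\int_{T_0}^t g^{2\al}(\tau)\,d\tau}\le \|w(T_0)\|_{L_x^2}^2+C\int_{T_0}^t e^{\int_{T_0}^{\tau}g^{2\al}(\tau')\,d\tau'}\,G(\tau)\,d\tau .$$

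First I would use \eqref{IM1} to control the running integral appearing in $G$. Since $\ga\in(-1,0)$,
$$\int_{T_0}^{\tau}\|w(\tau')\|_{L_x^2}^2\,d\tau'\lesssim\int_{T_0}^{\tau}\tau'^{\,\ga}\,d\tau'\lesssim \tau^{\ga+1},$$
so the quadratic term in \eqref{M-12} contributes $\big(\int_{T_0}^{\tau}\|w\|_{L_x^2}^2\big)^2\lesssim\tau^{2\ga+2}$, and therefore
$$G(\tau)\lesssim g^{\,d+2+2\al}(\tau)\big(\tau^{2\ga+2}+\tau^{2+\frac{2s}{\al}}\big)+\tau^{-\frac{d+2}{2\al}+1+\frac{2s}{\al}}.$$
Keeping the same weight $g(t)=(\frac{d}{t})^{\frac{1}{2\al}}$ as in the first step, so that $e^{\int_{T_0}^t g^{2\al}}=(\frac{t}{T_0})^{d}$ and $g^{\,d+2+2\al}(\tau)\lesssim\tau^{-\frac{d+2}{2\al}-1}$, I would substitute and integrate term by term (using $\int_{T_0}^t\tau^{a}\,d\tau\lesssim 1+t^{a+1}$) to obtain
$$\|w(t)\|_{L_x^2}^2\,t^{d}\lesssim 1+t^{\,d-\frac{d+2}{2\al}+2+2\ga}+t^{\,d-\frac{d+2}{2\al}+2+\frac{2s}{\al}} .$$
Dividing by $t^{d}$ then gives the claimed bound \eqref{IM-3} up to the extraneous term $t^{-d}$.

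The main thing to verify, and the only genuinely non-routine point, is that this leftover $t^{-d}$ is dominated by the right-hand side of \eqref{IM-3} for $t>T_0>e$, i.e. that $-d\le-\frac{d+2}{2\al}+2+\frac{2s}{\al}$. I would reduce this, after multiplying by $\al>0$, to $s\ge-\frac{(2\al-1)(d+2)}{4}$, and check it against the standing assumption $s>-\al+(1-\al)_+$. For $\al\le1$ this means $s>1-2\al$, and $1-2\al\ge-\frac{(2\al-1)(d+2)}{4}$ holds precisely because $d\ge2$; for $\al>1$ one has $s>-\al$, and $-\al\ge-\frac{(2\al-1)(d+2)}{4}$ reduces to $\al\ge\frac{d+2}{2d}$, which is automatic since $\al>1\ge\frac{1}{2}+\frac{1}{d}$. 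This is exactly the same use of the lower bound on $\al$ and $s$ that already absorbed the $t^{-d}$ term in the first step \eqref{M-6}. Once this exponent comparison is in place we get $t^{-d}\lesssim t^{-\frac{d+2}{2\al}+2+\frac{2s}{\al}}$ and hence \eqref{IM-3}; the remaining work is bookkeeping of exponents, with only mild care needed in the borderline case where an integrated power equals $-1$, which merely inserts a logarithmic factor that is harmless after division by $t^{d}$.
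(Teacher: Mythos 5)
Your proposal is correct and follows essentially the same route as the paper: both feed the hypothesis \eqref{IM1} into $G$ from \eqref{M-12}, insert the result into \eqref{M-11} with an algebraic weight $g$, integrate, and divide by the weight. The only (minor) difference is that the paper takes $g(t)=(2d/t)^{\frac{1}{2\al}}$, i.e.\ weight $t^{2d}$, which keeps every integrand exponent strictly above $-1$ and makes absorbing the leftover constant immediate, whereas your choice $g(t)=(d/t)^{\frac{1}{2\al}}$ forces the explicit exponent comparison $-d\le-\frac{d+2}{2\al}+2+\frac{2s}{\al}$ and the borderline-logarithm remark --- both of which you verify correctly (and the log case is indeed harmless because that comparison is in fact strict under $s>-\al+(1-\al)_+$).
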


Now, let us improve the decay rate \eqref{M-6}, we consider $\al \in \left( \frac{1}{2},\frac{d+2}{4} \right)$ and $\al =\frac{d+2}{4}$ separately.

\vspace{0.5em}

\hspace{.02in}
\noindent \textbf{\underline{$\bullet$ Case 1: $\al \in \left( \frac{1}{2},\frac{d+2}{4} \right)$.}
}

\vspace{0.5em}

In this case we have the following consequence from \eqref{M-6} and Lemma \ref{IML}, which shall be proved in Appendix \ref{A}.

\begin{lemma}\label{2DL}
    Assume that $\al \in \left( \frac{1}{2},\frac{d+2}{4} \right)$, let
    $$A_1^{(1)} := \left\{ (\al,s) \, \Big|\, \al \in \left( \frac{1}{2},\frac{d+2}{6} \right], s \in \Big( -\al+(1-\al)_+,0 \Big) \right\},$$
    and for any integer $n \geq 2$, let
    $$A_n^{(1)} := \left\{ (\al,s) \, \Big|\, \al \in \left( \frac{\si_n(d+2)}{4},\frac{\si_{n+1}(d+2)}{4} \right], s \in \left( -\al+(1-\al)_+,\eta_{n-1}(\al-\frac{d+2}{4}) \right) \right\},$$
    where $\si_n=\frac{2^n-2}{2^n-1}$ and $\eta_n=2^{n+1}-2$. Moreover, for any integer $n \geq 1$, define
    $$A_n^{(2)} := \left\{ (\al,s) \, \Big|\, \al \in \left( \frac{\si_{n+1}(d+2)}{4},\frac{d+2}{4} \right), s \in \left[ \eta_n(\al-\frac{d+2}{4}),\eta_{n-1}(\al-\frac{d+2}{4}) \right) \right\},$$
    and
    $$A_n^{(3)} := \left\{ (\al,s) \, \Big|\, \al \in \left( \frac{\si_{n+1}(d+2)}{4},\frac{d+2}{4} \right), s \in \left( -\al+(1-\al)_+,\eta_n(\al-\frac{d+2}{4}) \right) \right\}.$$
    Then for any $t>T_0$,  the estimate \eqref{M-6} can be improved as
    \begin{equation}\label{2D}
        ||w(t)||_{L_x^2}^2 \lesssim \left\{
        \begin{aligned}
        & t^{-\frac{d+2}{2\al}+2+\frac{2s}{\al}}, \quad \quad \quad \,\,\, (\al,s) \in A_n^{(1)} \cup A_n^{(2)}, \\
        & t^{(2^{n+1}-1)\left( -\frac{d+2}{2\al}+2 \right)}, \quad (\al,s) \in A_n^{(3)}.
        \end{aligned}
        \right.
    \end{equation}
\end{lemma}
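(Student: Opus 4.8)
The plan is to bootstrap \eqref{M-6} by iterating Lemma \ref{IML}, tracking the exponent and recording in which region each successive improvement is licensed. Write $\theta:=-\frac{d+2}{2\al}+2$, so that $\theta<0$ for $\al\in(\frac12,\frac{d+2}{4})$ and \eqref{M-6} reads $\|w(t)\|_{L_x^2}^2\lesssim t^{\theta}$; set $b:=-\theta$ and define $\ga_k:=(2^{k+1}-1)\theta$ for $k\ge-1$, so that $\ga_{-1}=0$, $\ga_0=\theta$, and $\theta+2\ga_k=\ga_{k+1}$, the latter being exactly the map produced by Lemma \ref{IML} when its second term does not dominate. The first task is a dictionary: using $\al-\frac{d+2}{4}=-\frac{\al b}{2}$ together with $\eta_m=2^{m+1}-2$ one gets $\eta_m(\al-\frac{d+2}{4})=(2^m-1)\al\theta=\al\ga_{m-1}$, so the condition $s<\eta_m(\al-\frac{d+2}{4})$ is just $\frac{s}{\al}<\ga_{m-1}$; moreover $b=\frac{1}{2^m-1}$ at $\al=\frac{\si_{m+1}(d+2)}{4}$, i.e. $\ga_{m-1}=-1$ there. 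Consequently, up to endpoints, $A_n^{(1)}$ is $\{\ga_{n-1}\le-1,\ \frac{s}{\al}<\ga_{n-2}\}$, $A_n^{(2)}$ is $\{\ga_{n-1}>-1,\ \ga_{n-1}\le\frac{s}{\al}<\ga_{n-2}\}$, and $A_n^{(3)}$ is $\{\ga_{n-1}>-1,\ \frac{s}{\al}<\ga_{n-1}\}$.

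Two estimates drive the iteration. The first is Lemma \ref{IML}, which I read as: if $\|w\|_{L_x^2}^2\lesssim t^{\ga}$ with $\ga\in(-1,0)$, then $\|w\|_{L_x^2}^2\lesssim t^{\max\{\theta+2\ga,\ \theta+2s/\al\}}$, the second exponent winning precisely when $\ga\le\frac{s}{\al}$ and then producing the optimal rate in one pass. The second is needed once the running exponent has dropped to $\le-1$, where Lemma \ref{IML} no longer applies: if $\|w\|_{L_x^2}^2\lesssim t^{\ga}$ with $\ga\le-1$, then $\int_{T_0}^\tau\|w\|_{L_x^2}^2\,d\tau'$ is bounded (with at worst a $\log\tau$ at $\ga=-1$), and rerunning the Fourier-splitting computation \eqref{M-11}--\eqref{M-12} with $g(t)=(d/t)^{1/(2\al)}$ collapses $G(\tau)$ to $\tau^{-\frac{d+2}{2\al}-1}+\tau^{-\frac{d+2}{2\al}+1+2s/\al}$, whence $\|w\|_{L_x^2}^2\lesssim t^{\theta-2}+t^{\theta+2s/\al}\lesssim t^{\theta+2s/\al}$ (up to a faster homogeneous term handled as in \eqref{M-5}); the last inequality uses $\frac{2s}{\al}>-2$, valid since $s>-\al$ throughout the admissible range.

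With these engines the statement follows by induction on $n$, the hypothesis being $\|w\|_{L_x^2}^2\lesssim t^{\ga_n}$ on $A_n^{(3)}$ and $\|w\|_{L_x^2}^2\lesssim t^{\theta+2s/\al}$ on $A_n^{(1)}\cup A_n^{(2)}$. For $n=1$ I start from \eqref{M-6} ($\ga_0=\theta$): on $A_1^{(1)}$ one has $\theta\le-1$ and the direct engine gives the optimal rate; on $A_1^{(2)}$ (where $\frac{s}{\al}\ge\ga_0$) one pass of Lemma \ref{IML} switches to $\theta+2s/\al$; on $A_1^{(3)}$ (where $\frac{s}{\al}<\ga_0$) it advances to $\ga_1=3\theta$. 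For the inductive step I use the partition $A_n^{(3)}=A_{n+1}^{(1)}\cup A_{n+1}^{(2)}\cup A_{n+1}^{(3)}$, a consequence of $\eta_{n+1}(\al-\frac{d+2}{4})<\eta_n(\al-\frac{d+2}{4})$ and the matching of the threshold $\al=\frac{\si_{n+2}(d+2)}{4}$, and feed the $A_n^{(3)}$ bound $t^{\ga_n}$ into the engines: on $A_{n+1}^{(1)}$ one has $\ga_n\le-1$, so the direct engine yields $t^{\theta+2s/\al}$; on $A_{n+1}^{(2)}$ one has $\ga_n>-1$ and $\frac{s}{\al}\ge\ga_n$, so Lemma \ref{IML} switches to $t^{\theta+2s/\al}$; on $A_{n+1}^{(3)}$ one has $\ga_n>-1$ and $\frac{s}{\al}<\ga_n$, so Lemma \ref{IML} advances to $t^{\theta+2\ga_n}=t^{\ga_{n+1}}$, closing the induction.

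The main difficulty, past the bookkeeping, is to make the region dictionary perfectly consistent, since the induction closes only because the three pieces of $A_n^{(3)}$ separate exactly along $\ga_n=-1$ (the threshold $\al=\frac{\si_{n+2}(d+2)}{4}$) and $\frac{s}{\al}=\ga_n$, the former deciding direct engine versus Lemma \ref{IML} and the latter deciding switching versus advancing; this is where the precise values $\si_n$ and $\eta_n$ are forced, via $b(\frac{\si_{m+1}(d+2)}{4})=\frac{1}{2^m-1}$ and $\eta_m(\al-\frac{d+2}{4})=\al\ga_{m-1}$. The second delicate point is the borderline $\ga_n=-1$ at the right endpoints of the $\al$-intervals (included in $A_n^{(1)}$): there $\int_{T_0}^t\|w\|_{L_x^2}^2$ grows only like $\log t$, and one must verify that this logarithmic loss is strictly beaten by the polynomial improvement, which is exactly the strict gap $\theta-2<\theta+2s/\al$ guaranteed by $s>-\al$.
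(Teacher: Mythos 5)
Your proposal is correct and, in its skeleton, coincides with the paper's argument: the same dictionary $\eta_m(\al-\frac{d+2}{4})=\al\,\ga_{m-1}$ with $\ga_k=(2^{k+1}-1)\left(-\frac{d+2}{2\al}+2\right)$, the same base case, the same induction on $n$ driven by the partition $A_n^{(3)}=A_{n+1}^{(1)}\cup A_{n+1}^{(2)}\cup A_{n+1}^{(3)}$, and the same applications of Lemma \ref{IML} on the $A^{(2)}$- and $A^{(3)}$-pieces (switching to the optimal exponent when $\frac{s}{\al}\ge\ga_n$, advancing to $\ga_{n+1}$ when $\frac{s}{\al}<\ga_n$). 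The one genuine divergence is on the $A^{(1)}$-pieces, where the running exponent satisfies $\ga_n\le-1$ and Lemma \ref{IML} is inapplicable: the paper disposes of this with a one-line weakening — since $\ga_n\le-1<\frac{s}{\al}$, one has $t^{\ga_n}\le t^{s/\al}$ for $t>1$, and then Lemma \ref{IML} with $\ga=\frac{s}{\al}\in(-1,0)$ gives the optimal rate in a single pass — whereas you build a separate ``direct engine'' by rerunning \eqref{M-11}--\eqref{M-12} with a bounded (or logarithmic) Duhamel mass. Your engine does work, and your handling of the log-borderline $\ga_n=-1$ via the strict gap $2+\frac{2s}{\al}>0$ is right, but it carries an obligation you only wave at (``up to a faster homogeneous term''): after multiplying by $(t/T_0)^{-d}$ the homogeneous term contributes $t^{-d}$, and its domination by $t^{-\frac{d+2}{2\al}+2+\frac{2s}{\al}}$ is equivalent to $s\ge\frac{(d+2)(1-2\al)}{4}$; this does hold throughout the admissible range, but only because of the full lower bound $s>-\al+(1-\al)_+$ — for $\al\le1$ one needs $s>1-2\al$, and the weaker $s>-\al$ you invoke elsewhere genuinely fails here when $d=2$, $\al<1$. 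With that check supplied your proof closes; the paper's weakening trick simply renders the entire extra engine unnecessary.
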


Note that
$$\left\{ (\al,s) \, \Big| \, \al \in \left( \frac{1}{2},\frac{d+2}{4} \right), \, s \in (-\al+(1-\al)_+,0) \right\} = \bigcup_{n \geq 1} \left( A_n^{(1)} \cup A_n^{(2)} \right).$$
Thus, we immediately obtain the decay
\begin{equation}\label{IM-4}
    ||w(t)||_{L_x^2}^2 \lesssim t^{-\frac{d+2}{2\al}+2+\frac{2s}{\al}}, \quad \forall t>T_0,
\end{equation}
by using \eqref{2D} directly.

\vspace{0.5em}

\hspace{.02in}
\noindent \textbf{\underline{ $\bullet$ Case 2: $\al=\frac{d+2}{4}$.}
}

\vspace{0.5em}

Note that one can not gain any decay from \eqref{M-6} when $\al=\frac{d+2}{4}$. Thus, we shall proceed in another way to obtain the decay \eqref{IM-4} in this case.

Taking $g(t)=(\frac{2}{3}t\ln{t})^{-\frac{1}{2\al}}$ in \eqref{M-4}, and noting that $e^{\int_{T_0}^t g^{2\al}(\tau) d\tau}=\left( \frac{\ln{t}}{\ln{T_0}} \right)^{\frac{3}{2}}$, we get
\begin{equation}\label{2D1}
    \begin{aligned}
        ||w(t)||_{L_x^2}^2 \left( \frac{\ln{t}}{\ln{T_0}} \right)^{\frac{3}{2}}
        \leq ||w(T_0)||_{L_x^2}^2 + C \int_{T_0}^t \left( \frac{\ln{\tau}}{\ln{T_0}} \right)^{\frac{3}{2}} \left( \tau^{-1} (\ln{\tau})^{-3} + \tau^{-1+\frac{2s}{\al}} \right) d\tau.
    \end{aligned}
\end{equation}
It is obvious that
$$\int_{T_0}^{+\infty} (\ln{\tau})^{\frac{3}{2}} \left( \tau^{-1} (\ln{\tau})^{-3} + \tau^{-1+\frac{2s}{\al}} \right) d\tau < +\infty,$$
which together with \eqref{2D1} yields
\begin{equation}\label{2D2}
    ||w(t)||_{L_x^2}^2 \lesssim (\ln{t})^{-\frac{3}{2}}, \quad \forall t>T_0.
\end{equation}

Using Hölder's inequality and the estimate \eqref{2D2}, it follows
\begin{equation}\label{2D3}
    \int_{T_0}^t ||w(\tau)||_{L_x^2}^2 d\tau \leq (t-T_0)^{\frac{1}{2}} \left( \int_{T_0}^t ||w(\tau)||_{L_x^2}^4 d\tau \right)^{\frac{1}{2}} \lesssim t^{\frac{1}{2}} \left( \int_{T_0}^t ||w(\tau)||_{L_x^2}^2 (\ln{\tau})^{-\frac{3}{2}} d\tau \right)^{\frac{1}{2}}.
\end{equation}
Plugging \eqref{2D3} into \eqref{M-11}-\eqref{M-12}, we have
\begin{equation}\label{2D7}
    \begin{aligned}
        & ||w(t)||_{L_x^2}^2 e^{\int_{T_0}^t g^{2\al}(\tau) d\tau}
        \leq ||w(T_0)||_{L_x^2}^2 \\
        & + C \int_{T_0}^t e^{\int_{T_0}^{\tau} g^{2\al}(\tau') d\tau'} \left\{ g^{6\al}(\tau) \left( \tau \int_{T_0}^\tau ||w(\tau')||_{L_x^2}^2 (\ln{\tau'})^{-\frac{3}{2}} d\tau' + \tau^{2+\frac{2s}{\al}} \right) + \tau^{-1+\frac{2s}{\al}} \right\} d\tau.
    \end{aligned}
\end{equation}

Alternatively, by choosing $g(t)=\left( 2t^{-1} \right)^{\frac{1}{2\al}}$ in \eqref{2D7}, one gets
\begin{align*}
    ||w(t)||_{L_x^2}^2 \left( \frac{t}{T_0} \right)^2
        & \lesssim
        1 + \int_{T_0}^t \tau^2 \left( \tau^{-2} \int_{T_0}^\tau ||w(\tau')||_{L_x^2}^2 (\ln{\tau'})^{-\frac{3}{2}} d\tau' + \tau^{-1+\frac{2s}{\al}}\right) d\tau \\
        & \lesssim 1 + \int_{T_0}^t \left(  \int_{T_0}^\tau ||w(\tau')||_{L_x^2}^2 (\ln{\tau'})^{-\frac{3}{2}} d\tau' + \tau^{1+\frac{2s}{\al}} \right) d\tau,
\end{align*}
from which and $s > -\al$, we obtain
\begin{equation}\label{2D9}
    \begin{aligned}
        ||w(t)||_{L_x^2}^2 \left( \frac{t}{T_0} \right)^2 & \lesssim t^{2+\frac{2s}{\al}} + \int_{T_0}^t \left( \int_{T_0}^\tau ||w(\tau')||_{L_x^2}^2 (\ln{\tau'})^{-\frac{3}{2}} d\tau' \right) d\tau \\
        & \lesssim t^{2+\frac{2s}{\al}} + t \int_{T_0}^t ||w(\tau)||_{L_x^2}^2 (\ln{\tau})^{-\frac{3}{2}} d\tau.
    \end{aligned}
\end{equation}

Multiplying $t^{-2-\frac{s}{\al}}$ on both sides of \eqref{2D9}, and using $s>-\al$, it holds
\begin{align*}
    ||w(t)||_{L_x^2}^2 t^{-\frac{s}{\al}}
        & \lesssim t^{\frac{s}{\al}} + t^{-1-\frac{s}{\al}} \int_{T_0}^t ||w(\tau)||_{L_x^2}^2 (\ln{\tau})^{-\frac{3}{2}} d\tau \\
        & \lesssim t^{\frac{s}{\al}} + \int_{T_0}^t ||w(\tau)||_{L_x^2}^2 \tau^{-\frac{s}{\al}} \tau^{-1} (\ln{\tau})^{-\frac{3}{2}} d\tau.
\end{align*}
Thus, by using Grönwall's inequality, one concludes
\begin{equation}\label{2D10}
    \begin{aligned}
        ||w(t)||_{L_x^2}^2 t^{-\frac{s}{\al}} 
        & \lesssim t^{\frac{s}{\al}} + e^{C_2} \int_{T_0}^t \tau^{-1+\frac{s}{\al}} (\ln{\tau})^{-\frac{3}{2}} d\tau \leq C,
    \end{aligned}
\end{equation}
by using $s<0$, where
$$C_2 = \int_{T_0}^{+\infty} \tau^{-1} (\ln{\tau})^{-\frac{3}{2}} d\tau < +\infty.$$

The estimate \eqref{2D10} implies
$$||w(t)||_{L_x^2}^2 \lesssim t^{\frac{s}{\al}}, \quad \forall t>T_0,$$
from which we obtain the decay \eqref{IM-4} for $\al=\frac{d+2}{4}$, by using Lemma \ref{IML} with $\ga=\frac{s}{\al}$.

Combining the decays \eqref{IM-4} for $||w||_{L_x^2}$ and \eqref{DH} for $||h^\om||_{L_x^2}$, we immediately conclude the time decay \eqref{TD1} for $||u||_{L_x^2}$. The proof is completed. 

\qed

\begin{remark}
    The calculations in this section are formal. However, they make sense for the approximation solutions $\{w^{N_k}\}_{k \in \mathbb{N}}$, which are constructed in Theorem \ref{DT}. More precisely, there exists a positive constant $C$, which is independent of $k$, such that
 \begin{equation}\label{RE}
        ||w^{N_k}||_{L^2(\mathbb{R}^d)}^2 \leq C t^{-\frac{d+2}{2\al}+2+\frac{2s}{\al}}, \quad \forall t>T_0.
    \end{equation}
    Let $\{D_j\}_{j \in \mathbb{N}}$ be a sequence of disjoint bounded domains in $\mathbb{R}^d$, satisfying $\bigcup_{j \in \mathbb{N}} D_j = \mathbb{R}^d$, from \eqref{RE} we obtain a subsequence, still denoted by $\{w^{N_k}\}_{k \in \mathbb{N}}$, and a function $\widetilde{\widetilde{w}}$, such that for any $t>T_0$, as $k\to +\infty$, 
    $$w^{N_k}(t) \rightharpoonup \widetilde{\widetilde{w}}(t), \quad \mathrm{in} ~\  L^2(D_j), ~\ \forall j \in \mathbb{N},$$
    from which and the convergence \eqref{L1}, one gets that there exists a zero measure set $N$, such that for any $t \in \{ \tau>T_0 : \tau \notin N \}$, as $k\to +\infty$, 
    $$w^{N_k}(t) \rightharpoonup w_2(t), \quad \mathrm{in} ~\  L^2(D_j), ~\ \forall j \in \mathbb{N},$$
    where $w_2$ is the weak solution constructed in Theorem \ref{DT}. Thus, by using the lower semi-continuity for weak convergence, Fatou's lemma and the estimate \eqref{RE}, it follows that for any $t \in \{ \tau>T_0 : \tau \notin N \}$,
    \begin{equation}\label{RE1}
        \begin{aligned}
            & ||w_2(t)||_{L^2(\mathbb{R}^d)}^2 = \sum_{j \in \mathbb{N}} ||w_2(t)||_{L^2(D_j)}^2 \leq \sum_{j \in \mathbb{N}} \liminf_{k \rightarrow +\infty} ||w^{N_k}(t)||_{L^2(D_j)}^2 \\
            & \hspace{.2in }\leq \liminf_{k \rightarrow +\infty} \sum_{j \in \mathbb{N}}  ||w^{N_k}(t)||_{L^2(D_j)}^2 = \liminf_{k \rightarrow +\infty} ||w^{N_k}(t)||_{L^2(\mathbb{R}^d)}^2 \leq C t^{-\frac{d+2}{2\al}+2+\frac{2s}{\al}}.
        \end{aligned}
    \end{equation}
    Next, for any fixed $t \in \{ \tau>T_0 : \tau \in N \}$, choosing $\{\tau_n\}_{n \in \mathbb{N}} \subset \{ \tau>T_0 : \tau \notin N \}$ such that $\tau_n \xrightarrow{n \rightarrow +\infty} t$, by using the weak continuity \eqref{L3} of $w_2$, we conclude
    \begin{align*}
        & ||w_2(t)||_{L^2(\mathbb{R}^d)}^2 \leq \liminf_{\tau \rightarrow t} ||w_2(\tau)||_{L^2(\mathbb{R}^d)}^2 \leq \liminf_{n \rightarrow +\infty} ||w_2(\tau_n)||_{L^2(\mathbb{R}^d)}^2 \\
            & \hspace{.2in} \leq C \liminf_{n \rightarrow +\infty} {\tau_n}^{-\frac{d+2}{2\al}+2+\frac{2s}{\al}} = C t^{-\frac{d+2}{2\al}+2+\frac{2s}{\al}},
    \end{align*}
    from which and \eqref{RE1}, we obtain that the decay \eqref{RE} holds true for the weak solution $w(t)$ defined in \eqref{WS20}.
\end{remark}

\appendix

\section{Proof of several Lemmas}\label{A}

\numberwithin{equation}{section}

\setcounter{equation}{0}

In this section, we give the proofs of several lemmas used in the previous sections.

\vspace{0.5em}

\noindent \textbf{Proof of Lemma 3.2:}

 On one hand, for the term $I_1$ defined in \eqref{ME11}, by using the inequality \eqref{IH}, one obtains
 \begin{align*}
     ||I_1||_{L_{\ka;T}^m W_x^{\be,r}} & = \left|\left| \int_0^\frac{t}{2} t^\ka e^{-(t-\tau)(-\Delta)^\al} (1-\Delta)^{\frac{\be+\al(2\mu+1)}{2}} (1-\Delta)^{-\frac{\al(2\mu+1)}{2}} B(f,g) \, d\tau  \right|\right|_{L_T^m L_x^r} \\
     & \lesssim \left|\left| \int_0^\frac{t}{2} t^\ka \left( 1+(t-\tau)^{\frac{\be+\al(2\mu+1)}{2\al}} \right) (t-\tau)^{-\frac{\be+\al(2\mu+1)}{2\al} - \frac{d}{2\al}\left( \frac{1}{p} \right)} ||B(f,g)||_{W_x^{-\al(2\mu+1),\frac{pr}{p+r}}} \, d\tau  \right|\right|_{L_T^m} \\
     & \lesssim \left( 1+T^{\frac{\be+\al(2\mu+1)}{2\al}} \right) \left|\left| \int_0^\frac{t}{2} t^\ka  (t-\tau)^{-\frac{\be+\al(2\mu+1)}{2\al} - \frac{d}{2\al}\left( \frac{1}{p} \right)} ||B(f,g)||_{W_x^{-\al(2\mu+1),\frac{pr}{p+r}}} \, d\tau  \right|\right|_{L_T^m} \\
     & \lesssim \left|\left| \int_0^\frac{t}{2} (t-\tau)^{\ka -\frac{\be+\al(2\mu+1)}{2\al} - \frac{d}{2\al}\left( \frac{1}{p} \right)} ||B(f,g)||_{W_x^{-\al(2\mu+1),\frac{pr}{p+r}}} \, d\tau  \right|\right|_{L_T^m},
 \end{align*}
 the last inequality holds since $T\in (0, 1)$ and $\tau \in \left[ 0,\frac{t}{2} \right]$. Noting from the relation \eqref{AP} and the hypothesis \eqref{H3.2} that
 $$-\ka+\frac{\be+\al(2\mu+1)}{2\al}+\frac{d}{2\al}\cdot\frac{1}{p} = 1-\left( \frac{1}{2}-\frac{1}{m} \right),$$
 by using Lemma \ref{EY}, the estimate \eqref{M11} and the relation $\frac{1}{a}+\frac{1}{b}=\frac{1}{2}$, we have
 \begin{equation}\label{ME12}
     \begin{aligned}
         ||I_1||_{L_{\ka;T}^m W_x^{\be,r}} & \lesssim \Big|\Big| ||B(f,g)||_{W_x^{-\al(2\mu+1),\frac{pr}{p+r}}} \Big|\Big|_{L_T^2} \\
         & \lesssim \Big|\Big| ||f||_{W_x^{1-\al(2\mu+1),p}} ||g||_{L_x^r} + ||f||_{L_x^r} ||g||_{W_x^{1-\al(2\mu+1),p}} \Big|\Big|_{L_T^2} \\
         & \lesssim ||f||_{L_T^b W_x^{1-\al(2\mu+1),p}} ||g||_{L_T^a L_x^r} + ||f||_{L_T^a L_x^r} ||g||_{L_T^b W_x^{1-\al(2\mu+1),p}}.
     \end{aligned}
 \end{equation}

 On the other hand, to estimate the term $I_2$ given in \eqref{ME11}, using the inequality \eqref{IH} and $T<1$, it follows
 \begin{align*}
     ||I_2||_{L_{\ka;T}^m W_x^{\be,r}} & = \left|\left| \int_\frac{t}{2}^t t^\ka e^{-(t-\tau)(-\Delta)^\al} (1-\Delta)^{\frac{2\be+1}{4}} (1-\Delta)^{-\frac{1}{4}} B(f,g) \, d\tau \right|\right|_{L_T^m L_x^r} \\
     & \lesssim \left|\left| \int_\frac{t}{2}^t t^\ka \left( 1+(t-\tau)^{\frac{\be+\frac{1}{2}}{2\al}} \right) (t-\tau)^{-\frac{\be+\frac{1}{2}}{2\al}-\frac{1}{2\al}\left( \frac{1}{p} \right)} ||B(f,g)||_{W_x^{-\frac{1}{2},\frac{pr}{p+r}}} \, d\tau \right|\right|_{L_T^m} \\
     & \lesssim \left( 1+T^{\frac{\be+\frac{1}{2}}{2\al}} \right) \left|\left| \int_\frac{t}{2}^t t^\ka (t-\tau)^{-\frac{\be+\frac{1}{2}}{2\al}-\frac{1}{2\al}\left( \frac{1}{p} \right)} ||B(f,g)||_{W_x^{-\frac{1}{2},\frac{pr}{p+r}}} \, d\tau \right|\right|_{L_T^m} \\
     & \lesssim \left|\left| \int_\frac{t}{2}^t t^{\ka-\left(\frac{3\mu+1}{4}-\frac{1}{8\al}\right)} (t-\tau)^{-\frac{\be+\frac{1}{2}}{2\al}-\frac{1}{2\al}\left( \frac{1}{p} \right)} \tau^{\frac{3\mu+1}{4}-\frac{1}{8\al}} ||B(f,g)||_{W_x^{-\frac{1}{2},\frac{pr}{p+r}}} \, d\tau \right|\right|_{L_T^m} \\
     & \lesssim \left|\left| \int_\frac{t}{2}^t (t-\tau)^{\ka-\left(\frac{3\mu+1}{4}-\frac{1}{8\al}\right)-\frac{\be+\frac{1}{2}}{2\al}-\frac{1}{2\al}\left( \frac{1}{p} \right)} \tau^{\frac{3\mu+1}{4}-\frac{1}{8\al}} ||B(f,g)||_{W_x^{-\frac{1}{2},\frac{pr}{p+r}}} \, d\tau \right|\right|_{L_T^m},
 \end{align*}
by using $\tau \in \Big[ \frac{t}{2},t \Big]$ and $\ka \in \left[ 0,\frac{3\mu+1}{4}-\frac{1}{8\al} \right]$. Moreover, by noting
$$-\ka+\left(\frac{3\mu+1}{4}-\frac{1}{8\al}\right)+\frac{\be+\frac{1}{2}}{2\al}+\frac{1}{2\al}\cdot \frac{1}{p}  = 1-\left( \frac{1-2\al(\mu-1)}{8\al} + \frac{1}{a} - \frac{1}{m} \right),$$
using Lemma \ref{EY} and the estimate \eqref{M11}, one gets
$$||I_2||_{L_{\ka;T}^m W_x^{\be,r}} \lesssim \Big|\Big| \tau^{\frac{3\mu+1}{4}-\frac{1}{8\al}} \left( ||f||_{W_x^{\frac{1}{2},p}} ||g||_{L_x^r} + ||g||_{L_x^r} ||f||_{W_x^{\frac{1}{2},p}} \right) \Big|\Big|_{L_T^l},$$
where $\frac{1}{l} = \frac{1-2\al(\mu-1)}{8\al} + \frac{1}{a}$, so by using Hölder's inequality, we obtain
\begin{equation}\label{ME13}
    \begin{aligned}
        ||I_2||_{L_{\ka;T}^m W_x^{\be,r}} & \lesssim ||f||_{L_{\frac{3\mu+1}{4}-\frac{1}{8\al};T}^{\frac{8\al}{1-2\al(\mu-1)}} W_x^{\frac{1}{2},p}} ||g||_{L_T^a L_x^r} + ||f||_{L_T^a L_x^r} ||g||_{L_{\frac{3\mu+1}{4}-\frac{1}{8\al};T}^{\frac{8\al}{1-2\al(\mu-1)}} W_x^{\frac{1}{2},p}}.
    \end{aligned}
\end{equation}

Combining \eqref{ME12}-\eqref{ME13} we conclude the estimate \eqref{ME}.

\qed

\vspace{0.5em}

\noindent \textbf{Proof of Lemma 3.3:}

    The proof is similar to that of Lemma \ref{MEL}, we present the detail here for complement. 
    
    For $I_1$ given in the decomposition \eqref{ME11}, using the estimate \eqref{IH} and $T<1$, it follows
    \begin{align*}
     ||I_1||_{L_{\ka;T}^m W_x^{\be,r}} & = \left|\left| \int_0^\frac{t}{2} t^\ka e^{-(t-\tau)(-\Delta)^\al} (1-\Delta)^{\frac{1+\be}{2}} (1-\Delta)^{-\frac{1}{2}} B(f,g) \, d\tau  \right|\right|_{L_T^m L_x^r} \\
     & \lesssim \left|\left| \int_0^\frac{t}{2} t^\ka \left( 1+(t-\tau)^{\frac{1+\be}{2\al}} \right) (t-\tau)^{-\frac{1+\be}{2\al} - \frac{d}{2\al}\left( \frac{1}{p} \right)} ||B(f,g)||_{W_x^{-1,\frac{pr}{p+r}}} \, d\tau  \right|\right|_{L_T^m} \\
     & \lesssim \left( 1+T^{\frac{1+\be}{2\al}} \right) \left|\left| \int_0^\frac{t}{2} t^\ka  (t-\tau)^{-\frac{1+\be}{2\al} - \frac{d}{2\al}\left( \frac{1}{p} \right)} ||B(f,g)||_{W_x^{-1,\frac{pr}{p+r}}} \, d\tau  \right|\right|_{L_T^m} \\
     & \lesssim \left|\left| \int_0^\frac{t}{2} (t-\tau)^{\ka -\frac{1+\be}{2\al} - \frac{d}{2\al}\left( \frac{1}{p} \right)} ||B(f,g)||_{W_x^{-1,\frac{pr}{p+r}}} \, d\tau  \right|\right|_{L_T^m}.
 \end{align*}
 Noting from the relation \eqref{AP} and the hypothesis \eqref{H3.3} that
 $$-\ka + \frac{1+\be}{2\al} + \frac{d}{2\al}\cdot \frac{1}{p}  = 1- \left( \frac{2}{a} - \frac{1}{m} \right),$$
 by using Lemma \ref{EY}, the estimate \eqref{M11} and Hölder's inequality, one has
 \begin{equation}\label{ME22}
     \begin{aligned}
         ||I_1||_{L_{\ka;T}^m W_x^{\be,r}} & \lesssim \Big|\Big| ||B(f,g)||_{W_x^{-1,\frac{pr}{p+r}}} \Big|\Big|_{L_T^{\frac{a}{2}}} \\
         & \lesssim \Big|\Big| ||f||_{L_x^p} ||g||_{L_x^r} + ||f||_{L_x^r} ||g||_{L_x^p} \Big|\Big|_{L_T^{\frac{a}{2}}} \\
         & \lesssim ||f||_{L_T^a L_x^p} ||g||_{L_T^a L_x^r} + ||f||_{L_T^a L_x^r} ||g||_{L_T^a L_x^p}.
     \end{aligned}
 \end{equation}

Similarly, for the term $I_2$ defined in \eqref{ME11}, by using the inequality \eqref{IH} and $T<1$, we obtain
\begin{align*}
     ||I_2||_{L_{\ka;T}^m W_x^{\be,r}} & = \left|\left| \int_\frac{t}{2}^t t^\ka e^{-(t-\tau)(-\Delta)^\al} (1-\Delta)^{\frac{\be+2(1-\al)}{2}} (1-\Delta)^{-\frac{2(1-\al)}{2}} B(f,g) \, d\tau \right|\right|_{L_T^m L_x^r} \\
     & \lesssim \left|\left| \int_\frac{t}{2}^t t^\ka \left( 1+(t-\tau)^{\frac{\be+2(1-\al)}{2\al}} \right) (t-\tau)^{-\frac{\be+2(1-\al)}{2\al}-\frac{1}{2\al}\left( \frac{1}{p} \right)} ||B(f,g)||_{W_x^{-2(1-\al),\frac{pr}{p+r}}} \, d\tau \right|\right|_{L_T^m} \\
     & \lesssim \left( 1+T^{\frac{\be+2(1-\al)}{2\al}} \right) \left|\left| \int_\frac{t}{2}^t t^\ka (t-\tau)^{-\frac{\be+2(1-\al)}{2\al}-\frac{1}{2\al}\left( \frac{1}{p} \right)} ||B(f,g)||_{W_x^{-2(1-\al),\frac{pr}{p+r}}} \, d\tau \right|\right|_{L_T^m} \\
     & \lesssim \left|\left| \int_\frac{t}{2}^t t^{\ka-\left(1-\frac{1}{2\al}\right)} (t-\tau)^{-\frac{\be+2(1-\al)}{2\al}-\frac{1}{2\al}\left( \frac{1}{p} \right)} \tau^{1-\frac{1}{2\al}} ||B(f,g)||_{W_x^{-2(1-\al),\frac{pr}{p+r}}} \, d\tau \right|\right|_{L_T^m} \\
     & \lesssim \left|\left| \int_\frac{t}{2}^t (t-\tau)^{\ka-\left(1-\frac{1}{2\al}\right)-\frac{\be+2(1-\al)}{2\al}-\frac{1}{2\al}\left( \frac{1}{p} \right)} \tau^{1-\frac{1}{2\al}} ||B(f,g)||_{W_x^{-2(1-\al),\frac{pr}{p+r}}} \, d\tau \right|\right|_{L_T^m},
 \end{align*}
 by noting
 $$-\ka+\left(1-\frac{1}{2\al}\right)+\frac{\be+2(1-\al)}{2\al}+\frac{d}{2\al}\cdot \frac{1}{p}  = 1 - \left( \frac{2}{a} - \frac{1}{m} \right),$$
 using Lemma \ref{EY} and the estimate \eqref{M11}, one gets
 \begin{align*}
     ||I_2||_{L_{\ka;T}^m W_x^{\be,r}} 
        & \lesssim \Big|\Big| \tau^{1-\frac{1}{2\al}} \left( ||f||_{W_x^{2\al-1,p}} ||g||_{L_x^r} + ||f||_{L_x^r} ||g||_{W_x^{2\al-1,p}} \right) \Big|\Big|_{L_T^{\frac{a}{2}}} \\
        & \lesssim ||f||_{L_{1-\frac{1}{2\al};T}^a W_x^{2\al-1,p}} ||g||_{L_x^a L_x^r} + ||f||_{L_x^a L_x^r} ||g||_{L_{1-\frac{1}{2\al};T}^a W_x^{2\al-1,p}},
 \end{align*}
from which and \eqref{ME22} we conclude the estimate \eqref{ME2}.

\qed

\vspace{0.5em}

\noindent \textbf{Proof of Lemma \ref{L}:}

Fix $t>t_0$, for any $n \in \mathbb{N}$, let $t_j=t_0+\frac{j}{n}(t-t_0)$ with $j=0,1,\cdots,n$. Then for any $1 \leq j \leq n$, one has
    \begin{equation}\label{L-3}
        \begin{aligned}
            & x(t_j) e^{\int_{t_0}^{t_j} g^{2\al}(\tau) d\tau} - x(t_{j-1}) e^{\int_{t_0}^{t_{j-1}} g^{2\al}(\tau) d\tau} \\
            = & x(t_j) e^{\int_{t_0}^{t_j} g^{2\al}(\tau) d\tau} -x(t_j) e^{\int_{t_0}^{t_{j-1}} g^{2\al}(\tau) d\tau} + x(t_j) e^{\int_{t_0}^{t_{j-1}} g^{2\al}(\tau) d\tau} - x(t_{j-1}) e^{\int_{t_0}^{t_{j-1}} g^{2\al}(\tau) d\tau} \\
            = & x(t_j) e^{\int_{t_0}^{t_{j-1}} g^{2\al}(\tau) d\tau} \left( e^{\int_{t_{j-1}}^{t_j} g^{2\al}(\tau) d\tau} - 1 \right) + e^{\int_{t_0}^{t_{j-1}} g^{2\al}(\tau) d\tau} \left( x(t_j) - x(t_{j-1}) \right).
        \end{aligned}
    \end{equation}
    Using Taylor's formula and the Mean Value Theorem, for $n$ large enough, we get
    \begin{equation}\label{L-2}
        e^{\int_{t_{j-1}}^{t_j} g^{2\al}(\tau) d\tau} - 1 = \int_{t_{j-1}}^{t_j} g^{2\al}(\tau) d\tau + C_{t} \, O(\frac{1}{n^2}),
    \end{equation}
    by using the continuity of $g$. Plugging \eqref{L-2} into \eqref{L-3}, one obtains
    \begin{equation}\label{L-6}
        \begin{aligned}
            & \mathrm{LHS} ~\  \mathrm{of} ~\ \eqref{L-3} \\
            = & x(t_j) e^{\int_{t_0}^{t_{j-1}} g^{2\al}(\tau) d\tau} \int_{t_{j-1}}^{t_j} g^{2\al}(\tau) d\tau + C_{t} \, O(\frac{1}{n^2}) + e^{\int_{t_0}^{t_{j-1}} g^{2\al}(\tau) d\tau} \left( x(t_j) - x(t_{j-1}) \right) \\
            = & e^{\int_{t_0}^{t_{j-1}} g^{2\al}(\tau) d\tau} \int_{t_{j-1}}^{t_j} \left( x(t_j)-x(\tau) \right) g^{2\al}(\tau) d\tau + C_{t} \, O(\frac{1}{n^2}) \\
            & + e^{\int_{t_0}^{t_{j-1}} g^{2\al}(\tau) d\tau} \left( x(t_j) - x(t_{j-1}) + \int_{t_{j-1}}^{t_j} x(\tau) g^{2\al}(\tau) d\tau \right).
        \end{aligned}
    \end{equation}
    Using the hypothesis \eqref{L-1}, we know that
    \begin{equation}\label{L-4}
        x(t_j) - x(t_{j-1}) + \int_{t_{j-1}}^{t_j} x(\tau) g^{2\al}(\tau) d\tau \leq \int_{t_{j-1}}^{t_j} y(\tau) d\tau,
    \end{equation}
    and
    \begin{equation}\label{L-5}
        \begin{aligned}
            \int_{t_{j-1}}^{t_j} \left( x(t_j)-x(\tau) \right) g^{2\al}(\tau) d\tau
            & \leq \int_{t_{j-1}}^{t_j} \left(  \int_{\tau}^{t_j} y(\tau') d\tau' \right) g^{2\al}(\tau) d\tau \\
            & \leq C_t \, \frac{1}{n} \int_{t_{j-1}}^{t_j} y(\tau) d\tau.
        \end{aligned}
    \end{equation}
    Plugging \eqref{L-4}-\eqref{L-5} into the right hand side of \eqref{L-6}, it follows
    \begin{align*}
        & x(t_j) e^{\int_{t_0}^{t_j} g^{2\al}(\tau) d\tau} - x(t_{j-1}) e^{\int_{t_0}^{t_{j-1}} g^{2\al}(\tau) d\tau} \\
            \leq & C_t \, \frac{1}{n} \int_{t_{j-1}}^{t_j} y(\tau) d\tau + C_{t} \, O(\frac{1}{n^2}) +  \, e^{\int_{t_0}^{t_{j-1}} g^{2\al}(\tau) d\tau} \int_{t_{j-1}}^{t_j} y(\tau) d\tau \\
            \leq & C_t \, \frac{1}{n} \int_{t_{j-1}}^{t_j} y(\tau) d\tau + C_{t} \, O(\frac{1}{n^2}) + \,  \int_{t_{j-1}}^{t_j} e^{\int_{t_0}^{\tau} g^{2\al}(\tau) d\tau} y(\tau) d\tau.
    \end{align*}
    Summing up one concludes
    \begin{align*}
        x(t) e^{\int_{t_0}^{t} g^{2\al}(\tau) d\tau} - x(t_0) & = \sum_{j=1}^n \left( x(t_j) e^{\int_{t_0}^{t_j} g^{2\al}(\tau) d\tau} - x(t_{j-1}) e^{\int_{t_0}^{t_{j-1}} g^{2\al}(\tau) d\tau} \right) \\
            & \leq C_t \, \frac{1}{n} \int_{t_0}^{t} y(\tau) d\tau + C_{t} \, O(\frac{1}{n}) +   \int_{t_0}^{t} e^{\int_{t_0}^{\tau} g^{2\al}(\tau) d\tau} y(\tau) d\tau.
    \end{align*}
    Thus, we obtain \eqref{L-7} by passing $n \rightarrow \infty$.
    
\qed

\vspace{0.5em}

\noindent \textbf{Proof of Lemma \ref{IML}:}

Using the hypothesis \eqref{IM1}, from the definition of $G(t)$ given in \eqref{M-12}, we have
\begin{equation}\label{IM-1}
    G(t) \leq g^{d+2\al+2}(t) \left( t^{2+2\ga}+t^{2+\frac{2s}{\al}} \right) + t^{-\frac{d+2}{2\al}+1+\frac{2s}{\al}}, \quad \forall t>T_0.
\end{equation}
Plugging \eqref{IM-1} into the right hand side of \eqref{M-11}, one obtains
\begin{equation}\label{IM-2}
    \begin{aligned}
        & ||w(t)||_{L_x^2}^2 e^{\int_{T_0}^t g^{2\al}(\tau) d\tau}
        \leq ||w(T_0)||_{L_x^2}^2 \\
        & \hspace{.2in} + C \int_{T_0}^t e^{\int_{T_0}^{\tau} g^{2\al}(\tau') d\tau'} \left\{ g^{d+2\al+2}(\tau) \left( \tau^{2+2\ga}+\tau^{2+\frac{2s}{\al}} \right) + \tau^{-\frac{d+2}{2\al}+1+\frac{2s}{\al}} \right\} d\tau.
    \end{aligned}
\end{equation}

Similarly, taking $g(t)=(\frac{2d}{t})^{\frac{1}{2\al}}$ in \eqref{IM-2}, it follows
\begin{equation}
    \begin{aligned}
        & ||w(t)||_{L_x^2}^2 (\frac{t}{T_0})^{2d} \\
        \lesssim & ||w(T_0)||_{L_x^2}^2 + \int_{T_0}^t (\frac{\tau}{T_0})^{2d} \left\{ (\frac{d}{\tau})^{\frac{d+2\al+2}{2\al}} \left( \tau^{2+2\ga} + \tau^{2+\frac{2s}{\al}} \right) + \tau^{-\frac{d+2}{2\al}+1+\frac{2s}{\al}}  \right\} d\tau \\
        \lesssim & 1 + \int_{T_0}^t \left( \tau^{2d-\frac{d+2}{2\al}+1+2\ga} + \tau^{2d-\frac{d+2}{2\al}+1+\frac{2s}{\al}} \right) d\tau \\
        \lesssim & 1 + t^{2d-\frac{d+2}{2\al}+2+2\ga} + t^{2d-\frac{d+2}{2\al}+2+\frac{2s}{\al}},
    \end{aligned}
\end{equation}
from which we immediately conclude \eqref{IM-3} by multiplying $(\frac{t}{T_0})^{-2d}$ on both sides.

\qed

\noindent \textbf{Proof of Lemma \ref{2DL}:}

We first show \eqref{2D} for $n=1$. It is obvious that $-\frac{d+2}{2\al}+2 \leq \frac{s}{\al}$ for any $(\al,s) \in A_1^{(1)} \cup A_1^{(2)}$, thus one gets
\begin{equation}\label{2D-1}
    ||w(t)||_{L_x^2}^2 \lesssim t^\frac{s}{\al}, \quad (\al,s) \in A_1^{(1)} \cup A_1^{(2)},
\end{equation}
by using \eqref{M-6} directly. Moreover, by using \eqref{2D-1} and Lemma \ref{IML} with $\ga=\frac{s}{\al}$, we have
\begin{equation}\label{2D-11}
    ||w(t)||_{L_x^2}^2 \lesssim t^{-\frac{d+2}{2\al}+2+\frac{2s}{\al}}, \quad (\al,s) \in A_1^{(1)} \cup A_1^{(2)}.
\end{equation}

Alternatively, for any $(\al,s) \in A_1^{(3)}$, one has $-\frac{d+2}{2\al}+2 \in (-1,0)$, so by using Lemma \ref{IML} with $\ga=-\frac{d+2}{2\al}+2$, it follows
\begin{equation}\label{2D-2}
    ||w(t)||_{L_x^2} \lesssim t^{3\left( -\frac{d+2}{2\al}+2 \right)} + t^{-\frac{d+2}{2\al}+2 + \frac{2s}{\al}}  \lesssim t^{3\left( -\frac{d+2}{2\al}+2 \right)}, \quad (\al,s) \in A_1^{(3)},
\end{equation}
by using $\frac{s}{\al}<-\frac{d+2}{2\al}+2$ in this set.

Combining \eqref{2D-11}-\eqref{2D-2} one obtains that the decay rate \eqref{2D} holds true for $n=1$. Next, let us show the decay rate \eqref{2D} for any integer $n \geq 1$ by induction.

Assume that \eqref{2D} holds for $n=k$. For $n=k+1$, note that
$$A_k^{(3)} = A_{k+1}^{(1)} \cup A_{k+1}^{(2)} \cup A_{k+1}^{(3)},$$
thus, from the assumption we know
\begin{equation}\label{2D-3}
    ||w(t)||_{L_x^2}^2 \lesssim t^{(2^{k+1}-1)\left( -\frac{d+2}{2\al}+2 \right)}, \quad (\al,s) \in \bigcup_{j=1}^3 A_{k+1}^{(j)}.
\end{equation}
Clearly, one has
$$(2^{k+1}-1)\left( -\frac{d+2}{2\al}+2 \right) \leq \frac{s}{\al}, \quad (\al,s) \in A_{k+1}^{(1)} \cup A_{k+1}^{(2)},$$
which together with \eqref{2D-3} implies
$$||w(t)||_{L_x^2}^2 \lesssim t^{\frac{s}{\al}}, \quad (\al,s) \in A_{k+1}^{(1)} \cup A_{k+1}^{(2)},$$
from which and Lemma \ref{IML} we obtain
\begin{equation}\label{2D-4}
    ||w(t)||_{L_x^2}^2 \lesssim t^{-\frac{d+2}{2\al}+2+\frac{2s}{\al}}, \quad (\al,s) \in A_{k+1}^{(1)} \cup A_{k+1}^{(2)}.
\end{equation}

On the other hand, for any $(\al,s) \in A_{k+1}^{(3)}$, one has $\frac{s}{\al} < (2^{k+1}-1)\left( -\frac{d+2}{2\al}+2 \right)$, hence we get
\begin{equation}\label{2D-22}
    ||w(t)||_{L_x^2}^2 \lesssim t^{(2^{k+2}-1)\left( -\frac{d+2}{2\al}+2 \right)} + t^{-\frac{d+2}{2\al}+2 + \frac{2s}{\al}} \lesssim t^{(2^{k+2}-1)\left( -\frac{d+2}{2\al}+2 \right)}, \quad (\al,s) \in A_{k+1}^{(3)},
\end{equation}
by using Lemma \ref{IML} with $\ga=(2^{k+1}-1)\left( -\frac{d+2}{2\al}+2 \right)$.

Combining \eqref{2D-4}-\eqref{2D-22} we conclude that the decay rate \eqref{2D} holds true for $n=k+1$, thus it holds true for any integer $n \geq 1$ by induction.

\qed

\vspace{.1in}
\par{\bf Acknowledgements.} This research was supported by National Key R\&D Program of China under grant 2024YFA1013302, and National Natural Science Foundation of China under grant Nos.12331008, 12171317 and 12250710674.

\end{document}